\documentclass[1 [leqno,11pt]{amsart}
\usepackage{amssymb, amsmath,latexsym,amsfonts,amsbsy, amsthm,mathtools,graphicx,CJKutf8,CJKnumb,CJKulem,color,xcolor,mathrsfs,dsfont}
\usepackage{float}
\usepackage{hyperref}
\usepackage{braket}
\setlength{\oddsidemargin}{0mm}
\setlength{\evensidemargin}{0mm} \setlength{\topmargin}{0mm}
\setlength{\textheight}{220mm} \setlength{\textwidth}{155mm}

\numberwithin{equation}{section}

\allowdisplaybreaks


\let\al=\alpha

\let\f=\frac

\let\pa=\partial



\def\R{\mathbb R}
\def\Z{\mathbb Z}
\def\T{\mathbb T}

\def\no{\noindent}

\def\oo{\infty}

\def\wkt{w_k^\mathcal{T}}
\def\wkit{w_{k,i}^\mathcal{T}}
\def\wkht{w_{k,h}^\mathcal{T}}
\def\wktt{w_k^{\widetilde{\mathcal{T}}}}
\def\wkitt{w_{k,i}^{\widetilde{\mathcal{T}}}}
\def\wkhtt{w_{k,h}^{\widetilde{\mathcal{T}}}}
\def\wkc{\hat{w}_k^\mathcal{C}}
\def\wkic{\hat{w}_{k,i}^\mathcal{C}}
\def\wkhc{\hat{w}_{k,h}^\mathcal{C}}

\def\oo{\infty}

\def\Ltr{{L^2_r}}
\def\Lor{{L^\oo_r}}
\def\Lpr{{L^p_r}}

\def\LtT{{L^2_T}}
\def\LoT{{L^\oo_T}}

\def\LpT{{L^p_T}}
\def\LtO{{L^2(\Omega)}}
\def\LpO{{L^p(\Omega)}}

\newcommand{\andf}{\quad\hbox{and}\quad}
\newcommand{\with}{\quad\hbox{with}\quad}

\newcommand{\beq}{\begin{equation}}
\newcommand{\eeq}{\end{equation}}
\newcommand{\ben}{\begin{eqnarray}}
\newcommand{\een}{\end{eqnarray}}
\newcommand{\beno}{\begin{eqnarray*}}
\newcommand{\eeno}{\end{eqnarray*}}

\newtheorem{theorem}{Theorem}[section]

\newtheorem{lemma}[theorem]{Lemma}
\newtheorem{proposition}[theorem]{Proposition}



\begin{document}

\title[Linear enhanced dissipation for the TC flow in the exterior region]
{Linear enhanced dissipation for the 2D Taylor-Couette flow in the exterior region: A supplementary example for Gearhart-Pr\"uss type lemma}

\author[T. Li]{Te Li}%
\address[T. Li]
 {Academy of Mathematics $\&$ Systems Science, The Chinese Academy of
Sciences, Beijing 100190, CHINA.}
\email{teli@amss.ac.cn}

\author[P. Zhang]{Ping Zhang}%
\address[P. Zhang]
 {Academy of Mathematics $\&$ Systems Science
and  Hua Loo-Keng Key Laboratory of Mathematics, The Chinese Academy of
Sciences, Beijing 100190, CHINA, and School of Mathematical Sciences, University of Chinese Academy of Sciences, Beijing 100049, CHINA. }
\email{zp@amss.ac.cn}

\author[Y. Zhang]{Yibin Zhang}%
\address[Y. Zhang]
 {Academy of Mathematics $\&$ Systems Science, The Chinese Academy of
Sciences, Beijing 100190, CHINA.  } \email{zhangyibin22@mails.ucas.ac.cn}

\date{\today}

\begin{abstract}From the perspective of asymptotic stability at high Reynolds numbers, Taylor-Couette flow, as a typical rotating shear flow, exhibits rich decay behaviors. Previously, for the extensively studied Couette flow or the Taylor-Couette flow in bounded annular domains, methods based on resolvent estimates could derive exponential decay asymptotic  for the solutions of the linearized system. However, unlike the Couette flow or the Taylor-Couette flow in bounded annular domains, the Taylor-Couette flow in exterior regions exhibits degeneration of derivatives of any order at infinity. In this paper, we present in Theorem \ref{main them-1} that the linearized system of the 2D Taylor-Couette flow in the exterior region exhibits space-time coupled  polynomial decay asymptotics. We also prove that the solution to this system, when it contains inhomogeneous terms, cannot be expected to exhibit space-time coupled exponential decay, as detailed in Theorem \ref{main them-2}. The result of Theorem \ref{main them-2} indicates that, even if we can obtain sharp resolvent estimates in different weighted spaces, the Gearhart-Pr\"uss type lemma no longer applies. This suggests that resolvent estimates may not be very effective for handling degenerate shear flows. Furthermore, Theorem \ref{main them-2} also implies that, for the transition threshold problem of the 2D Taylor-Couette flow in exterior regions, we can at most expect the solution to exhibit long-time behavior with space-time coupled polynomial decay.
Finally, we present a generalization of Theorem \ref{main them-2}, as detailed in Theorem \ref{main thm 3}.

\end{abstract}

\maketitle
\tableofcontents

\section{Introduction}
We consider 2D incompressible Navier-Stokes equations in the exterior region:
\begin{align}
\label{full nonlinear equation}\left\{
\begin{aligned}
&\partial_tv-\nu\Delta v+v\cdot \nabla v+\nabla p=0, \quad  (t,x) \in \R_+ \times \Omega_e ,\\
& \text{div } v=0,\\
&v(t,x)|_{t=0}=v_0(x),
\end{aligned}
\right.
\end{align}
where $\Omega_e=\bigl\{x=(r\cos \theta,r\sin\theta)\in \R^2:  |x|\geq 1\}= \{(r,\theta):  r \in [1,\oo), \theta\in\T\bigr\}$. The vector field $v(t,x)$ denotes the fluid velocity, $p(t,x)$ represents the scalar pressure function and the viscous coefficient $\nu>0$ is a constant. We denote the vorticity of $v$ by $\omega:=\partial_1v_2-\partial_2 v_1$, and focus on the stability of the 2D Taylor-Couette flow, in short TC flow,
\begin{align}\label{2D TC}
v^{*}(r,\theta)=(Ar+\f{B}{r})\left(
  \begin{array}{cc}
   -\sin\theta \\
   \cos\theta
    \end{array}
   \right),\quad  \omega^*(r,\theta) =2A,
\end{align}
 which is a steady solution to \eqref{full nonlinear equation}. Here $A,B$ are both constants.

The Taylor-Couette flow is a classical example of a simple system yet with complex and rich stability properties, as well as a prototype of anisotropic and inhomogeneous turbulence, as can be referred in such review books as \cite{AH,CI}. On more general grounds, the Taylor-Couette device is also an excellent prototype to study transport properties of most astrophysical or geophysical rotating shear flows \cite{DDDLRZ}: depending on the rotation speed of each cylinder, one can obtain various flow regimes with increasing or decreasing angular velocity and/or angular momentum. Regarding the issue of asymptotic stability at high Reynolds numbers \cite{BST}, Taylor-Couette flow, as a typical rotating shear flow, also provides rich decay characteristics. Therefore, investigating the stability of Taylor-Couette flow is a matter of great interest in both mathematics and physics. Earlier, the well-posedness and stability of the two-dimensional stationary Navier-Stokes equations in exterior regions \cite{BM,GHM} were widely studied and discussed, along with the recent research on the overall well-posedness and stability of small initial data \cite{M1,M2} in exterior regions.

Previously, the asymptotic stability problem of the 2D bounded region TC flow is considered in \cite{AHL-2}. This paper explores the enhanced dissipation phenomenon of the TC flow in the exterior region. Due to the degeneracy of all derivatives of the TC flow at infinity, this leads to stability phenomena that differ from those in the bounded region \cite{AHL-2}.

\subsection{Linearization near TC flow}\label{Taylor-Couette flow}
We first write \eqref{full nonlinear equation} in the vorticity formulation:
\begin{align}
\label{full nonlinear equation ns vor}
&\partial_t\omega-\nu\Delta\omega+v\cdot \nabla \omega=0.
\end{align}
By setting $w := \omega-\omega^*$, $u := v-v^{*}$, the vorticity perturbation equation near the 2D TC flow $(v^{*},\omega^*)$ is given by
\begin{align}\label{pertubation of the Taylor-Couette flow}
\left\{
\begin{aligned}
&\partial_tw-\nu\Delta w+\Bigl(A+\f{B}{r^2}\Bigr)\partial_{\theta}w+\f{1}{r}(\partial_r\varphi\partial_{\theta}w-\partial_{\theta}\varphi\partial_rw)=0,\\
&\Delta \varphi=w,  \qquad
(t,r,\theta)\in\mathbb{R}_{+}\times[1,\oo) \times\mathbb{T},
\end{aligned}
\right.
\end{align}
where $\Delta=\partial_r^2+r^{-1}\partial_r+r^{-2}\partial_{\theta}^2$ and $\varphi$ is the stream function. We implement \eqref{pertubation of the Taylor-Couette flow} with Navier-slip boundary conditions:
\begin{equation}\label{boundary conditions}
       w|_{\pa\Omega_e} =\varphi|_{\pa\Omega_e}=0.
\end{equation}

By expanding Fourier series of $w$ and $\varphi$ in $\theta$ variable:  $w=\sum\limits_{k\in\mathbb{Z}}e^{ik\theta}\hat{w}_k(t,r), \varphi=\sum\limits_{k\in\mathbb{Z}}e^{ik\theta}\hat{\varphi}_k(t,r),
$
and substituting them into  \eqref{pertubation of the Taylor-Couette flow}, and then comparing the Fourier coefficients, we find
\begin{align}\label{S1eq1}
\left\{
\begin{aligned}
&\partial_t\hat{w}_k-\nu\Bigl(\partial_{r}^2+\f{1}{r}\partial_{r}-\f{k^2}{r^2}\Bigr)\hat{w}_k+\Bigl(A+\f{B}{r^2}\Bigr)ik\hat{w}_k+ \hat{f}_k=0,\\
&\Bigl(\partial_r^2+\f{1}{r}\partial_r-\f{k^2}{r^2}\Bigr)\hat{\varphi}_k=\hat{w}_k,
\quad \quad
(t,r)\in\mathbb{R}_{+}\times[1,\oo),
\end{aligned}
\right.
\end{align}
where $\hat{f}_k=ir^{-1}\sum_{\ell\in\mathbb{Z}}\bigl((k-\ell)\partial_r\hat{\varphi}_\ell\hat{w}_{k-\ell}
-\ell\hat{\varphi}_{\ell}\partial_{r}\hat{w}_{k-\ell}\bigr)$.

To get rid of the terms in \eqref{S1eq1} which involve $\frac{\pa_r}{r}$ and $A$, we introduce
\begin{align*}
    \wkt:=r^{1/2}e^{ikAt}\hat{w}_k, \quad \varphi_k^{\mathcal{T}}:=r^{1/2} e^{ikAt}\hat{\varphi}_k,
    \quad f_k:=-r^{1/2}e^{ikAt}\hat{f}_k,\quad \Delta_{k,r}:=\partial_r^2-(k^2-1/4)/r^2.
\end{align*}
Then it follows from \eqref{S1eq1} that
\begin{subequations}\label{1.6}\begin{gather}
\label{scaling nonlinear}\partial_t \wkt + (- \nu\Delta_{k,r}+ikB /r^2)\wkt=f_k,\quad(t,r)\in\mathbb{R}_{+}\times[1,\oo),\\
\Delta_{k,r} \varphi_k^{\mathcal{T}} = \wkt, \\
\wkt|_{r=1} =\varphi_k^{\mathcal{T}}|_{r=1}=0.
\end{gather}\end{subequations}

We denote the $k$-mode linearized operator corresponding to the TC flow as
\begin{align}
\label{the linearized operator for tc} \mathcal{T}_k:= - \nu\Delta_{k,r}+ikB /r^2,
\end{align}
which acts on $L^2_r(1,\oo)$ with domain $D:=H^2\bigl((1,\oo),dr\bigr)\cap H^1_0\bigl((1,\oo),dr\bigr)$. For brevity, we may abbreviate $\mathcal{T}_k$ to $\mathcal{T}$, if there is no ambiguity.

Until now, we have transformed the problem into exploring the dynamic and long-time behavior of \eqref{1.6}. A standard decomposition yields two parts: the homogeneous linear equation and the inhomogeneous  equation with zero initial data, namely $\wkt=\wkht+\wkit$, where $\wkht$ and $\wkit$ satisfy the following two equations respectively:

 \begin{align}\label{homogenous eq tc}
        & \partial_t \wkht +\mathcal{T}\wkht = 0,\quad \wkht|_{t=0} =w_k(0), \quad \wkht|_{r=1,\infty}=0,\\
      \label{inhomogenous eq tc}  &
         \partial_t \wkit+\mathcal{T}\wkit = f_k, \quad \wkit|_{t=0}=0,\quad \wkit|_{r=1,\infty}=0.
\end{align}

\subsection{Notations and Main theorems}
We first list some commonly used notations. By $\Omega$, we mean an open domain in Euclidean space, which may differs from paragraph to  paragraph and depends on the context before and after. For $1\leq p\leq \oo $, we denote
\begin{equation*}
    \Lpr:= L^p\bigl((1,\oo), dr\bigr), \quad \LpO:= L^p(\Omega, dx), \quad
    \LpT := L^p\bigl((0,T), dt\bigr).
\end{equation*}
We define
\begin{align*}
\langle f,g\rangle_\Ltr=\int_1^\oo f(r)\overline{g}(r)dr,
\end{align*}
and $\braket{\cdot, \cdot}_\LtO$ is similarly defined.
 By $w'$, we mean $\pa_r w$. We denote $g\lesssim f$ if there exists a constant $C>0$ so that $g\leq C f$, moreover we denote $g \ll f$ if this $C$ is universal and small enough, and we use $g\lesssim_\al f$ to emphasize dependence of $C$ on $\al$. We always denote
\begin{align*}
    & \Delta_{k,r}:=\partial_r^2-(k^2-1/4)/r^2,\quad \mathcal{T}:= - \nu\Delta_{k,r}+ikB /r^2, \\
    &\qquad \qquad \quad    D= H^2\bigl((1,\oo),dr\bigr)\cap H^1_0\bigl((1,\oo),dr\bigr) ,\\
    &
\Phi(\mathcal{T})=\kappa_k= \nu^\f13 |kB|^\f23, \quad \mu_k=\max\{\nu k^2,\kappa_k \} ,\quad \Phi(\mathcal{C})=\nu^\f13 |k|^\f23.
\end{align*}

Below we present our main theorems. In Section \ref{motivations}, we shall explain the motivations and their connections to  Gearhart-Prüss type lemma and enhanced dissipation of transport-diffusion equations.
\begin{theorem} \label{main them-1}
   Let $k\in \mathbb{Z}\backslash\{0\}$,  $q\in \mathbb{N}$, $\frac{\nu}{|kB|}\ll (1+q)^{-3}$, $T>0$ and let $\wkht$ be the solution to  \eqref{homogenous eq tc} with $w_k(0)\in \Ltr$. There exists constant $C=C(q)>0$ independent of $\nu,k,B,T$ and $w_k(0)$, so that
\begin{equation*}
   \bigl\|(1+\Phi(\mathcal{T}) t/r^2)^{q} \wkht\bigr\|_{\LoT(\Ltr)} + \Phi(\mathcal{T})^{1/2} \bigl\| (1+\Phi(\mathcal{T}) t/r^2)^{q} r^{-1} \wkht\bigr\|_{\LtT(\Ltr)} \leq C \|w_k(0)\|_{\Ltr}.
\end{equation*}
\end{theorem}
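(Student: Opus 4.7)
My plan is to combine a weighted $L^{2}$ energy estimate with an induction on $q$. Set $s:=\kappa_k t/r^{2}$ and $\phi(t,r):=(1+s)^{q}$, and write $\tilde w:=\phi\,\wkht$. Multiplying the equation $\partial_t\wkht+\mathcal{T}\wkht=0$ by $\phi^{2}\overline{\wkht}$, taking the real part, integrating in $r$, and integrating by parts (boundary contributions vanish thanks to $\wkht|_{r=1}=0$ and decay at infinity, and the imaginary multiplier $ikB/r^{2}$ drops from the real part), I would obtain
\begin{equation*}
\tfrac{1}{2}\partial_t\|\tilde w\|_{\Ltr}^{2}+\nu\|\partial_r\tilde w\|_{\Ltr}^{2}+\nu(k^{2}-\tfrac{1}{4})\|\tilde w/r\|_{\Ltr}^{2}=\int\tfrac{\partial_t\phi}{\phi}|\tilde w|^{2}\,dr+\nu\int\Bigl(\tfrac{\partial_r\phi}{\phi}\Bigr)^{2}|\tilde w|^{2}\,dr.
\end{equation*}
Evaluating $\partial_t\phi/\phi=q\kappa_k/(r^{2}(1+s))$ and $(\partial_r\phi/\phi)^{2}=4q^{2}s^{2}/(r^{2}(1+s)^{2})$, the two source terms are bounded by $q\kappa_k\int|\tilde w|^{2}/(r^{2}(1+s))\,dr$ and $4q^{2}\nu\|\tilde w/r\|_{\Ltr}^{2}$. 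The hypothesis $\nu/|kB|\ll(1+q)^{-3}$, which reads $\kappa_k/\nu=(|kB|/\nu)^{2/3}\gg(1+q)^{2}$, immediately gives $q^{2}\nu\ll\kappa_k$, so the second term is subcritical compared with the target dissipation $\kappa_k\|\tilde w/r\|^{2}$. The serious obstacle is the first, because $q\kappa_k$ typically exceeds the self-adjoint dissipation rate $\nu(k^{2}-\tfrac{1}{4})$.

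To handle it I would use Cauchy--Schwarz on the splitting $(1+s)^{2q-1}=(1+s)^{q}(1+s)^{q-1}$, obtaining pointwise in $t$
\begin{equation*}
\int\frac{|\tilde w|^{2}}{r^{2}(1+s)}\,dr\le\|\tilde w/r\|_{\Ltr}\,\|(1+s)^{q-1}\wkht/r\|_{\Ltr},
\end{equation*}
trading one factor at level $q$ for one at level $q-1$. Integrating in $t$, Cauchy--Schwarz in $t$ together with the inductive hypothesis at level $q-1$ (supplying $\kappa_k^{1/2}\|(1+s)^{q-1}\wkht/r\|_{\LtT(\Ltr)}\le C_{q-1}\|w_k(0)\|_{\Ltr}$) and Young's inequality convert the bad contribution into $\epsilon\kappa_k\|\tilde w/r\|_{\LtT(\Ltr)}^{2}+C_{\epsilon,q}\|w_k(0)\|_{\Ltr}^{2}$, so the induction closes provided the $\epsilon$-piece can be absorbed by an enhanced-dissipation term $\kappa_k\|\tilde w/r\|_{\LtT(\Ltr)}^{2}$ placed on the left.

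The hard part, both for the base case $q=0$ and at every induction step, is producing that enhanced-dissipation term: the real part of $\mathcal{T}$ delivers only $\nu(k^{2}-\tfrac{1}{4})\|\tilde w/r\|^{2}$, which is insufficient when $\nu k^{2}\lesssim|B|$, and the Gearhart--Pr\"uss route is unavailable because the decay of $B/r^{2}$ at infinity forces an $r$-dependent rate $\kappa_k/r^{2}$ instead of a constant. I would extract the missing term from the imaginary multiplier $ikB/r^{2}$ by augmenting the weighted energy with a small hypocoercive correction built from the cross expression $\kappa_k^{-1}\mathrm{Im}\langle\partial_r\tilde w,(kB/r^{2})\tilde w\rangle_{\Ltr}$ (or a closely related variant adapted to the space--time weight); its time derivative along the equation, after integrations by parts, generates a contribution comparable to $\kappa_k\|\tilde w/r\|^{2}$, balanced against viscous and weight-commutator terms that are dominated by $\nu\|\partial_r\tilde w\|^{2}$ and by the pieces already estimated above. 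Choosing $\delta$ small enough that the correction is a lower-order perturbation of $\|\tilde w\|_{\Ltr}^{2}$ then closes the weighted estimate at level $q$.
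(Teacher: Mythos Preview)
Your induction-on-$q$ framework is exactly the paper's: set $\eta_q=(1+\kappa_k t/r^2)^q\,\wkht$, derive an equation of the form $\partial_t\eta_q+\mathcal{T}\eta_q=(\text{terms in }\eta_{q-1},\eta_q)$, and iterate. The genuine gap is in what you call ``the hard part''---the mechanism you propose for producing the enhanced dissipation $\kappa_k\|\tilde w/r\|^{2}$ does not work.

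If you compute the time derivative of your cross term $\kappa_k^{-1}\,\mathrm{Im}\langle\partial_r\tilde w,(kB/r^{2})\tilde w\rangle$ along the equation, the contribution from the transport piece $-ikB/r^{2}$ is (after integration by parts) proportional to $\kappa_k^{-1}(kB)^{2}\|\tilde w/r^{5/2}\|_{\Ltr}^{2}$, \emph{not} to $\kappa_k\|\tilde w/r\|_{\Ltr}^{2}$. The weight is $r^{-5}$, not $r^{-2}$, because differentiating the degenerate coefficient $r^{-2}$ produces an extra $r^{-1}$. For a function supported on $r\sim R$ this falls short of the target by a factor $(|kB|/\nu)^{2/3}R^{-3}$, which is $\ll1$ once $R\gg(|kB|/\nu)^{2/9}$. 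So your correction cannot absorb $\epsilon\kappa_k\|\tilde w/r\|^{2}_{\LtT(\Ltr)}$ uniformly in $r$, and no ``closely related variant'' with a bounded spatial weight fixes this: increasing the weight to compensate makes the cross term fail to be a lower-order perturbation of $\|\tilde w\|_{\Ltr}^{2}$. This is exactly the degeneracy-at-infinity obstruction the paper highlights as the reason Gearhart--Pr\"uss methods break down.

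The paper circumvents this by proving a \emph{single} space--time estimate
\[
\|\eta\|_{\LoT(\Ltr)}+\nu^{1/2}\|\partial_r\eta\|_{\LtT(\Ltr)}+\mu_k^{1/2}\|\eta/r\|_{\LtT(\Ltr)}\lesssim\|\eta(0)\|_{\Ltr}+\mu_k^{-1/2}\|rf\|_{\LtT(\Ltr)}
\]
for the generic problem $\partial_t\eta+\mathcal{T}\eta=f$, and then applying it to each $\eta_q$. The enhanced-dissipation term $\mu_k^{1/2}\|\eta/r\|$ is obtained by two separate devices: for the inhomogeneous part, weighted resolvent estimates transported by Fourier transform in $t$; for the homogeneous part, the twist substitution $\wkht=e^{-ikBt/r^{2}}w^{(1)}+w^{(2)}$, which replaces the imaginary multiplier by a real time-growing damping $\nu(2kBt/r^{3})^{2}$, followed by a dyadic decomposition in $r$ that localizes the estimate to scales where this damping dominates $\kappa_k/r^{2}$ on the region $\{r^{2}\le\kappa_k t\}$ and uses the pointwise $\LoT$ bound on its complement. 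None of this is a hypocoercivity correction in the sense you describe.
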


\begin{theorem} \label{main them-2}
For any positive continuous functions \( a_1(r) \), \( a_2(r) \) and strictly decreasing function \( \phi(r) \), there exists a sequence of functions,
 \(\{(w_n, f_n)\}\), that solves \eqref{inhomogenous eq tc} as below
\begin{align*}
  \partial_t w_n +\mathcal{T}w_n=f_n, \quad w_n|_{t=0}=0, \quad  w_n|_{r=1,\infty}=0,
\end{align*}
and satisfies for any $n \in \mathbb{N}$
\begin{align}
\label{finite energy of w_n, f_n}\|e^{t \phi(r)} a_1(r)  w_n\|_{L^2((0,\oo); \Ltr)} + \|e^{t\phi(r)} a_2(r) f_n\|_{L^2((0,\oo); \Ltr)} < +\oo.
\end{align}
However, there holds
\begin{equation}\label{false main them-2}
    \limsup_{n \rightarrow +\oo} \frac{\|e^{t\phi(r)} a_1(r)  w_n\|_{L^2((0,\oo); \Ltr)}}{ \|e^{t\phi(r)} a_2(r) f_n\|_{L^2((0,\oo); \Ltr)}} = +\oo.
\end{equation}
\end{theorem}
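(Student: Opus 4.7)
The strategy is to exhibit approximate null vectors of $\partial_t + \mathcal{T}$ in the weighted spaces, exploiting the fact that as $r \to \infty$ the operator $\mathcal{T}$ reduces to the free Laplacian $-\nu\partial_r^2$ (modulo $O(r^{-2})$ corrections from curvature and swirl), whose essential spectrum is $[0,\infty)$. Explicitly, I would take the ansatz
\[
w_n(t,r) = \psi(t)\, e^{-\mu_n t}\, e^{i\omega_n r}\, \chi\!\left(\frac{r-r_n}{L_n}\right),
\]
with $\chi\in C_c^\infty((-1,1))$ a fixed bump and $\psi\in C^\infty([0,\infty))$ a time cut-off satisfying $\psi(0)=0$ and $\psi(t)=1$ for $t\ge 1$, and with parameters $r_n\to\infty$, $1\ll L_n\le r_n/2$, $\mu_n\ge 0$, $\omega_n\ge 0$ to be tuned below. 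Both the Dirichlet condition at $r=1$ (since $\chi_n$ is supported away from $1$ for large $n$) and the zero initial condition $w_n|_{t=0}=0$ are then automatic.

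A direct differentiation with the tuning $\mu_n=\nu\omega_n^2$ produces
\[
f_n = e^{-\mu_n t} e^{i\omega_n r}\Bigl[\psi'(t)\chi_n + \psi(t)\bigl(V(r)\chi_n - 2i\nu\omega_n \chi_n' - \nu\chi_n''\bigr)\Bigr],
\]
where $V(r)=(\nu(k^2-1/4)+ikB)/r^2$; on $\mathrm{supp}\,\chi_n$ each error term is small, namely $|V|\lesssim r_n^{-2}$, $|\chi_n'|\lesssim L_n^{-1}$, $|\chi_n''|\lesssim L_n^{-2}$, while $\psi'$ contributes only a bounded transient on $[0,1]$. Setting $\alpha_n(r):=\mu_n-\phi(r)$, which is positive on $\mathrm{supp}\,\chi_n$ as soon as $\mu_n>\phi(r_n-L_n)$ (permitted by strict monotonicity of $\phi$), the time integral $\int_0^\infty \psi(t)^2 e^{-2\alpha_n(r)t}\,dt \sim 1/\alpha_n(r)$ makes both weighted norms finite and the factor $1/\alpha_n$ cancels in the ratio, reducing the problem to a spatial comparison. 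A routine computation then yields
\[
\frac{\|e^{t\phi} a_1 w_n\|^2}{\|e^{t\phi} a_2 f_n\|^2} \;\gtrsim\; \Bigl(\frac{a_1(r_n)}{a_2(r_n)}\Bigr)^{\!2}\cdot\min\!\bigl(L_n^4/\nu^2,\ r_n^4\bigr),
\]
which I would drive to $+\infty$ by taking $(\omega_n,\mu_n)=(0,0)$ when $\lim_{r\to\infty}\phi(r)\le 0$ (stationary case), and by choosing $\omega_n = \sqrt{(\phi(r_n-L_n)+\varepsilon_n)/\nu}$ with $\varepsilon_n\downarrow 0$ otherwise (so that $\mu_n$ tracks the supremum of $\phi$ on the support from above), picking $r_n,L_n$ along a sub-sequence adapted to $a_1,a_2$.

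The hard part is precisely this last step: making the ratio diverge for \emph{arbitrary} positive continuous weights $a_1,a_2$. When $a_2/a_1$ grows faster than polynomially at infinity the single-bump ansatz above is not by itself sufficient, and the construction needs to be refined — for instance by introducing a second length scale into $\chi_n$, or by letting $\omega_n$ vary independently of $\mu_n$, so that an additional free parameter can compensate the growth of $a_2/a_1$ along a suitably sparse sub-sequence $\{r_n\}$. Once the scales are fixed, verifying that $w_n$ solves \eqref{inhomogenous eq tc}, meets the boundary and initial conditions, and enjoys the finite-energy bound \eqref{finite energy of w_n, f_n} is a routine direct calculation with the bump integrals.
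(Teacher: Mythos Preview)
Your quasimode construction at infinity has a genuine gap that the refinements you suggest cannot close. The core issue is that any ansatz of product form $w_n(t,r)=\psi(t)e^{-\mu_n t}e^{i\omega_n r}\chi_n(r)$ forces $w_n$ and $f_n=(\partial_t+\mathcal{T})w_n$ to share the same spatial support, and the residual $f_n$ is at best of size $O(r_n^{-2})$ relative to $w_n$ (from the potential $V(r)=(\nu(k^2-\tfrac14)+ikB)r^{-2}$) or $O(\nu L_n^{-2})$ (from $\nu\chi_n''$). Hence the spatial ratio you isolate is only \emph{polynomially} large in $r_n$, while $(a_1(r_n)/a_2(r_n))^2$ can decay arbitrarily fast --- taking for instance $a_1(r)=e^{-r^2}$, $a_2(r)=e^{r^2}$ already kills your estimate. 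No extra length scale or decoupling of $\omega_n$ from $\mu_n$ changes this: the local error of any smooth quasimode of $\mathcal{T}$ centred near $r_n$ is bounded below by a fixed negative power of $r_n$. There is a second, independent problem: the transient $\psi'(t)\chi_n$ contributes to $\|e^{t\phi}a_2 f_n\|^2$ a term of order $a_2(r_n)^2 L_n$ \emph{without} the $1/\alpha_n$ amplification, so whenever $\alpha_n$ stays bounded below (e.g.\ when $\phi(r)\to c<0$) this transient alone forces the ratio to be $\lesssim(a_1(r_n)/a_2(r_n))^2$, not the polynomial improvement you claim.

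The paper's argument is entirely different and sidesteps both obstructions by working on a \emph{fixed compact} set near $r=1$ and exploiting \emph{time translation} together with a \emph{spatial support mismatch}. One first solves $\partial_t\tilde\eta+\mathcal{T}\tilde\eta=0$ exactly on the short interval $[1,1+\delta]$ with zero initial data and a smooth inhomogeneous boundary datum at $r=1+\delta$; Poincar\'e on $[1,1+\delta]$ gives $\tilde\eta$ exponential time decay at rate $\sim\nu\delta^{-2}$, and one extends $\tilde\eta$ smoothly to $[1,\infty)$ with support in $[1,2+\delta]$. Then $F:=\partial_t\tilde\eta+\mathcal{T}\tilde\eta$ vanishes identically on $[1,1+\delta]$, so $\operatorname{supp}_rF\subset[1+\delta,2+\delta]$, whereas $\tilde\eta$ itself is nontrivial on $[1,1+\delta)$. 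Setting $w_n(t,r):=\tilde\eta(t-n,r)$ and $f_n(t,r):=F(t-n,r)$ (zero for $t<n$), the weight factors as $e^{t\phi(r)}=e^{n\phi(r)}e^{(t-n)\phi(r)}$; since $\phi$ is strictly decreasing, $e^{n\phi(r)}$ is uniformly larger on $[1,1+\delta-\varepsilon]$ (where $w_n$ has mass) than on $[1+\delta,2+\delta]$ (where $f_n$ is supported), producing an \emph{exponential-in-$n$} gain $e^{n(\phi(1+\delta-\varepsilon)-\phi(1+\delta))}$ in the ratio. The weights $a_1,a_2$ are harmless because they are bounded above and below on the fixed compact set $[1,2+\delta]$, and the finite-energy condition \eqref{finite energy of w_n, f_n} is arranged by taking $\delta$ small enough that the decay rate $\nu\delta^{-2}$ beats $\max_{[1,3]}\phi$. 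The missing idea in your plan is precisely this support mismatch: an exact local solution makes $f\equiv0$ on a subinterval where $w\not\equiv0$, something no separated-variables ansatz can deliver.
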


The proof of Theorem \ref{main them-1} highly relies on the transport term $\frac{ik B}{r^2}$ in $\mathcal{T}$, while the proof of Theorem \ref{main them-2} doesn't. Indeed, the proof of Theorem \ref{main them-2} only relies on the forward translation invariance in time of the equation. Hence along the same lines and under the same assumptions, same results in Theorem \ref{main them-2}  also hold for {\bf Examples E1} and {\bf E2} in Section \ref{motivations}. This motivates us to derive a generalization of Theorem  \ref{main them-2}.

We consider an abstract evolution equation
\begin{equation}\label{abstracted evolution equation}
         \partial_tw+\mathcal{L}w = f,\quad w|_{t=0}=w(0,x), \quad (t,x) \in \R_+ \times \Omega,
\end{equation}
where $\Omega \subset \R^n$ and \( \mathcal{L} \) is a closed, densely defined linear operator on $L^2(\Omega)$ with domain $D(\mathcal{L})$, which generates a $\LtO$-strongly continuous semigroup $\{e^{t\mathcal{L}}\}_{t\geq0}$.

\begin{theorem}\label{main thm 3}
    We assume that \eqref{abstracted evolution equation} has fundamental solution $p_\mathcal{L}(t,x,y) \in \mathcal{C}^\oo(\R_+\times \Omega\times \Omega)$, in the sense that, for any $w(0,x)\in \mathcal{C}^\oo_c(\Omega)$ and $f(t,x)\in \mathcal{C}_c^\oo(\R_+\times \Omega)$, the mild solution of \eqref{abstracted evolution equation} can be  represented as
\begin{equation}\label{duhamel formula}
    w(t,x) = \int_\Omega p_\mathcal{L}(t,x,y) w(0,y) dy + \int_0^t \int_\Omega p_\mathcal{L}(t-s,x,y) f(s,y)dy ds.
\end{equation}
We assume in addition that the support of $ p_\mathcal{L}$ is $\R_+ \times \Omega\times \Omega$.
Then for any positive continuous functions \( a_1(x) \), \( a_2(x) \) and non-constant continuous function \( \phi(x) \), there exists a sequence of functions, \(\{(w_n, f_n)\}\), that solves \eqref{duhamel formula} with $w_n(0,x)=0$,
and satisfies for any $n \in \mathbb{N}$,
\begin{align}\label{1.15}
f_n(t,x)\in \mathcal{C}_c^\oo(\R_+\times \Omega),\quad \|e^{t\phi(x)} a_2(x) f_n\|_{L^2((0,\oo)\times \Omega)} < +\oo.
\end{align}
However, there holds
\begin{equation}\label{false main thm 3}
    \limsup_{n \rightarrow +\oo} \frac{\|e^{t\phi(x)} a_1(x)  w_n\|_{L^2((0,\oo)\times \Omega)}}{ \|e^{t\phi(x)} a_2(x) f_n\|_{L^2((0,\oo)\times \Omega)}} = +\oo.
\end{equation}
\end{theorem}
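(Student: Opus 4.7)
The plan is to exploit the following degree of freedom: when $\phi$ takes different values at two points of $\Omega$, placing $f_n$ at a very late time $t_n$ near a point $y^*$ where $\phi$ is small makes $\|e^{t\phi}a_2 f_n\|$ grow only like $e^{t_n\phi(y^*)}$, while Duhamel propagation through $p_{\mathcal{L}}$ can produce a non-trivial response near a point $x^*$ with larger $\phi$, so that $\|e^{t\phi}a_1 w_n\|$ grows like $e^{t_n\phi(x^*)}$. Since $\phi$ is non-constant and continuous, I would first select $x^*,y^*\in\Omega$ with $\alpha_0:=\phi(x^*)>\phi(y^*)=:\beta_0$; fixing $\eta\in(0,(\alpha_0-\beta_0)/4)$, continuity furnishes neighborhoods $U\ni x^*$ and $V\ni y^*$ on which $\phi>\alpha_0-\eta$ and $\phi<\beta_0+\eta$ respectively. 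The ratio in \eqref{false main thm 3} will scale like $e^{t_n(\alpha_0-\beta_0-2\eta)}$, which blows up as $t_n\to\infty$.

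Next I would convert the support hypothesis into a quantitative lower bound on $|p_{\mathcal{L}}|$ over a space-time box. Since $p_{\mathcal{L}}\in\mathcal{C}^\oo$ and $\mathrm{supp}(p_{\mathcal{L}})=\R_+\times\Omega\times\Omega$, the set $\{p_{\mathcal{L}}\neq 0\}$ is dense and hence meets the open set $\R_+\times U\times V$. Pick $t^*>0$, $(\tilde x^*,\tilde y^*)\in U\times V$ with $p_{\mathcal{L}}(t^*,\tilde x^*,\tilde y^*)\neq 0$, set $\theta_0:=\arg p_{\mathcal{L}}(t^*,\tilde x^*,\tilde y^*)$, and use continuity of $p_{\mathcal{L}}$ to find $\delta>0$ and smaller neighborhoods $\tilde U\subset U$, $\tilde V\subset V$ on which
\begin{equation*}
\mathrm{Re}\bigl(e^{-i\theta_0}\,p_{\mathcal{L}}(\tau,x,y)\bigr)\ \geq\ \varepsilon\ >\ 0,\qquad (\tau,x,y)\in[t^*-\delta,t^*+\delta]\times\tilde U\times\tilde V.
\end{equation*}
Choosing $\psi\in\mathcal{C}_c^\oo\bigl((0,\delta/4)\bigr)$ and $g\in\mathcal{C}_c^\oo(\tilde V)$, both nonnegative with positive integral, I would define $f_n(t,y):=e^{i\theta_0}\psi(t-t_n)g(y)$ for a sequence $t_n\to\infty$. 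Then $f_n\in\mathcal{C}_c^\oo(\R_+\times\Omega)$, and since $\phi<\beta_0+\eta$ on $\mathrm{supp}_y g$, a direct integration yields $\|e^{t\phi}a_2 f_n\|_{L^2((0,\oo)\times\Omega)}\lesssim e^{(t_n+\delta/4)(\beta_0+\eta)}<+\oo$, verifying \eqref{1.15}.

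Inserting $f_n$ into \eqref{duhamel formula} with $w_n(0,\cdot)=0$ and using the lower bound on $\mathrm{Re}(e^{-i\theta_0}p_{\mathcal{L}})$, I would show that for every $(t,x)\in[t_n+t^*,t_n+t^*+\delta/4]\times\tilde U$,
\begin{equation*}
|w_n(t,x)|\ \geq\ \mathrm{Re}\bigl(e^{-i\theta_0}w_n(t,x)\bigr)\ \geq\ \varepsilon\Bigl(\int\psi\Bigr)\Bigl(\int g\Bigr)\ =:\ c_0\ >\ 0,
\end{equation*}
whence $\|e^{t\phi}a_1 w_n\|_{L^2((0,\oo)\times\Omega)}\gtrsim e^{(t_n+t^*)(\alpha_0-\eta)}$; dividing the two estimates and letting $t_n\to\infty$ produces the claimed limsup. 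I expect the main obstacle to be precisely this upgrade from the qualitative density of $\{p_{\mathcal{L}}\neq 0\}$ to a \emph{uniform lower bound with controlled phase} over a space-time box. A naive sign argument fails because $p_{\mathcal{L}}$ is in general complex-valued (as for the TC operator $\mathcal{T}$, owing to the imaginary term $ikB/r^2$); the phase rotation by $e^{i\theta_0}$ in $f_n$ is exactly what preserves a pointwise bound on $|w_n|$ after integration against $p_{\mathcal{L}}$. Once this kernel step is secured, the rest is a routine matching of exponential growth rates in the weighted $L^2$ norms.
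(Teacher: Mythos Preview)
Your strategy is correct and coincides with the paper's: exploit time-translation invariance, localize the source near a low-$\phi$ region at late times $t_n$, and show via Duhamel that the response has nontrivial mass in a high-$\phi$ region, so the weighted ratio grows like $e^{t_n(\alpha_0-\beta_0-2\eta)}$. One bookkeeping slip: the factor $e^{i\theta_0}$ in your $f_n$ is counterproductive, since then $e^{-i\theta_0}w_n=\int p_{\mathcal L}\psi g$ and $\mathrm{Re}(e^{-i\theta_0}w_n)=\int\mathrm{Re}(p_{\mathcal L})\,\psi g$, which your box bound on $\mathrm{Re}(e^{-i\theta_0}p_{\mathcal L})$ does \emph{not} control. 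Simply take $f_n=\psi(t-t_n)g(y)$ (real-valued); then $\mathrm{Re}(e^{-i\theta_0}w_n)=\int\mathrm{Re}(e^{-i\theta_0}p_{\mathcal L})\,\psi g\geq\varepsilon\!\int\!\psi\int\! g$ exactly as you intended, and the rest of your argument goes through.

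The only substantive difference from the paper is how the complex-valuedness of $p_{\mathcal L}$ is handled. Instead of freezing the phase at a point and shrinking to a box, the paper sets $f(s,y)=\zeta(s)\xi(y)\,\overline{p}_{\mathcal L}(1-s,x_0,y)$ with $\xi$ supported in the low-$\phi$ region $V_1$ and $x_0$ in the high-$\phi$ region $V_2$, so that Duhamel at $(t,x)=(1,x_0)$ gives $w(1,x_0)=\int\zeta\xi\,|p_{\mathcal L}(1-s,x_0,y)|^2\,dy\,ds\geq 0$; the full-support hypothesis then yields $w(1,x_0)>0$ for a suitable $x_0$, and continuity gives positivity on an open set. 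This ``conjugate kernel'' trick delivers positivity in one stroke without any phase localization; your route is more elementary but needs the extra box shrinkage. After this step, both proofs finish with the same time-shift $f_n(t,\cdot)=f(t-n,\cdot)$ and the identical exponential comparison.
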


\subsection{The motivations of this work}\label{motivations}
\subsubsection{Enhanced dissipation of transport-diffusion equations}
One motivation of this work stems from the study of transport-diffusion equation
\begin{equation}\label{1.16}
    \pa_t w - \nu \Delta w +V\cdot
    \nabla w=0.
\end{equation}
Here  the constant $\nu\ll 1$ designates the viscous coefficient.
If we restrict
$$V(x,y)=\begin{pmatrix} s(y)\\0 \end{pmatrix} \quad \text{for some} \ s(y) \  \text{smooth enough},$$
and $(x,y)\in \T\times \Omega$, where $\Omega=\T$ or $[0,1]$.   Then \eqref{1.16} reads
\begin{align}
\label{transport-diffusion equation}\left\{
\begin{aligned}
&\partial_t w -\nu (\pa_x^2+\pa_y^2) w + s(y) \pa_x w=0, \quad  (t,x,y) \in \R_+ \times \T \times \Omega ,\\
&w(t)|_{t=0}=w(0).
\end{aligned}
\right.
\end{align}
 When $\Omega=[0,1]$, we implement \eqref{transport-diffusion equation} with Dirichlet or Neumann Boundary conditions. We write $w(t,x,y)=\sum_{k\in\Z} \hat{w}_k(t,y) e^{ikx}$ and then obtain the $k$-mode equation
\begin{align}
\label{k-mode transport-diffusion equation}\left\{
\begin{aligned}
&\partial_t \hat{w}_k -\nu (\pa_y^2-k^2) \hat{w}_k + iks(y)\hat{w}_k= 0, \quad  (t,y) \in \R_+  \times \Omega ,\\
&\hat{w}_k(t)|_{t=0}=\hat{w}_{k}(0).
\end{aligned}
\right.
\end{align}

When $s(y)=0$, \eqref{transport-diffusion equation} and \eqref{k-mode transport-diffusion equation} are actually heat equations. Hence we can only expect $\hat{w}_k(t)$ decays after $t\geq \nu^{-1}$. However if $s(y)$ has a finite number of critical points, of maximal order $n_0$ (i.e., for any $y\in\Omega$, there exists $n\leq n_0$ so that $s^{(n+1)}(y)\neq 0$), then for $k\neq 0$, $\hat{w}_k$ will decay when $t\ll \nu^{-1}$. This phenomenon is called ``enhanced dissipation" and could date back to Hörmander's hypoellipticity theory. We recall the following theorem for readers' convenience:
\begin{theorem}[\cite{BZ17,ABN22}]\label{thm1.4}
    There exist constants $C,c,\nu_0>0$ depending only on $s(y)$, so that for any $\nu\leq \nu_0$ and $t\geq0$, there holds
\begin{equation}
    \|\hat{w}_k(t)\|_{L^2(\Omega)} \leq Ce^{-c\nu^{\frac{n_0+1}{n_0+3}}t} \|\hat{w}_k(0)\|_{L^2(\Omega)}.
\end{equation}
\end{theorem}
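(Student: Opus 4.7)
\textbf{The plan} is to reduce the semigroup decay to a resolvent/pseudospectral estimate on the $k$-mode operator $\mathcal{L}_k := -\nu(\partial_y^2-k^2) + iks(y)$, which is m-accretive on $L^2(\Omega)$ with the prescribed Dirichlet or Neumann boundary conditions, and then to apply a quantitative Gearhart--Pr\"uss lemma (e.g.\ Wei's version). Concretely, I would establish
\begin{equation*}
\Psi(\mathcal{L}_k) := \inf_{\lambda\in\R,\,u\in D(\mathcal{L}_k),\,\|u\|_{L^2}=1}\,\|(\mathcal{L}_k - i\lambda)u\|_{L^2} \;\gtrsim\; \nu^{(n_0+1)/(n_0+3)}|k|^{2/(n_0+3)},
\end{equation*}
which, since $|k|\geq 1$, absorbs into the stated rate $c\nu^{(n_0+1)/(n_0+3)}$ once the Gearhart--Pr\"uss machinery converts the pseudospectral bound into a semigroup estimate.

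\textbf{Pseudospectral estimate.} Fix $\lambda\in\R$ and a normalized $u\in D(\mathcal{L}_k)$, and set $f := (\mathcal{L}_k - i\lambda)u$. Testing against $u$ and separating real and imaginary parts yields
\begin{equation*}
\nu\|u'\|_{L^2}^2 + \nu k^2 \;\leq\; \|f\|_{L^2},\qquad \Big|\int_\Omega (s(y) - \lambda/k)|u(y)|^2\,dy\Big|\;\leq\; |k|^{-1}\|f\|_{L^2}.
\end{equation*}
For a parameter $\delta>0$, let $\mathcal{O}_\delta := \{y\in\Omega : |s(y)-\lambda/k|\leq \delta\}$. Since $s$ has only finitely many critical points of maximal order $n_0$, the standard sublevel-set estimate gives $|\mathcal{O}_\delta|\lesssim \delta^{1/(n_0+1)}$. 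On $\mathcal{O}_\delta^c$, partitioning into subregions where $s-\lambda/k$ has constant sign and inserting suitable cutoff test functions into the imaginary identity leads to $\|u\|_{L^2(\mathcal{O}_\delta^c)}^2 \lesssim (\delta|k|)^{-1}\|f\|_{L^2}$. On $\mathcal{O}_\delta$, a Gagliardo--Nirenberg interpolation on each component, combined with $\|u'\|_{L^2}\lesssim \nu^{-1/2}\|f\|_{L^2}^{1/2}$ from the real identity, produces $\|u\|_{L^2(\mathcal{O}_\delta)}^2 \lesssim \delta^{1/(n_0+1)}\nu^{-1/2}\|f\|_{L^2}^{1/2}$ up to lower-order terms. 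Summing, using $\|u\|_{L^2}=1$, and then optimizing $\delta\sim (\nu/|k|)^{(n_0+1)/(n_0+3)}$ delivers the desired lower bound on $\Psi(\mathcal{L}_k)$.

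\textbf{The main obstacle} is the delicate local analysis at the critical points of maximal order $n_0$: one must build a partition of unity at the ``trapping scale'' $|s-\lambda/k|^{1/(n_0+1)}$ around each such point, ensure the Gagliardo--Nirenberg interpolation constants stay uniform as the local scale shrinks with $\delta$, and handle boundary contributions compatibly with the Dirichlet/Neumann conditions on $\partial\Omega$. The exponents $(n_0+1)/(n_0+3)$ and $2/(n_0+3)$ fall out precisely from the above scaling balance, and they are sharp due to the behavior near the worst critical points. Once the global pseudospectral bound is in hand, the semigroup decay is a black-box consequence of the quantitative Gearhart--Pr\"uss lemma, completing the proof.
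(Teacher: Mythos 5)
Your proposal is correct in outline, but note that the paper itself does not prove Theorem \ref{thm1.4}: it is quoted from \cite{BZ17,ABN22}, whose original arguments are based on hypocoercivity/augmented energy functionals and H\"ormander-type hypoellipticity rather than on spectral methods. Your route — prove the pseudospectral bound $\Psi(\mathcal{L}_k)\gtrsim \nu^{(n_0+1)/(n_0+3)}|k|^{2/(n_0+3)}$ for the m-accretive operator $\mathcal{L}_k=-\nu(\partial_y^2-k^2)+iks(y)$ and then invoke the quantitative Gearhart--Pr\"uss lemma (Lemma \ref{GP lemma}, \cite{W21}) — is a genuinely different but well-established alternative, and it is in fact the same template the paper uses elsewhere (the Couette resolvent estimate in Appendix B and Lemma \ref{resolvent estimate-1} for the TC operator follow exactly your real/imaginary-part testing plus sign-adapted cutoff scheme). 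What the resolvent route buys is a clean black-box passage from a stationary estimate to semigroup decay (at the cost of the harmless $e^{\pi/2}$ and of requiring m-accretivity, which holds here since $iks(y)$ is a bounded skew-adjoint perturbation of a nonnegative self-adjoint operator); what the hypocoercivity route buys is a direct time-dependent energy argument that does not pass through the resolvent and generalizes more easily to situations, like the exterior TC flow of this paper, where the Gearhart--Pr\"uss machinery fails (cf.\ Theorem \ref{main them-2}). Your scaling analysis is consistent: balancing $\delta|k|$ against $\delta^{-2/(n_0+1)}\nu$ gives $\delta\sim(\nu/|k|)^{(n_0+1)/(n_0+3)}$ and the claimed exponents, and with $|k|\ge 1$ (the statement is implicitly for $k\neq 0$) this yields the stated rate.

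One technical caution on the step you rightly flag as the main obstacle: with a single global threshold $\delta$ tuned to the worst critical point, the sign-adapted multiplier must be allowed to transition over the \emph{uniform} scale $\delta^{1/(n_0+1)}$ near every zero of $s-\lambda/k$ (enlarging the ``bad set'' there accordingly), not over the local width $\delta^{1/(m+1)}$ of the sublevel set at a lower-order zero; otherwise the cutoff-derivative error $\nu\,\delta^{-1/(m+1)}\|u'\|_{L^2}\|u\|_{L^2}$ at simple zeros would spoil the balance. Since you state the partition of unity is built at the trapping scale $\delta^{1/(n_0+1)}$, this is exactly the right fix, and the finitely many zeros of $s-\lambda/k$ (guaranteed by the finite critical-point assumption) keep all constants depending only on $s$.
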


Similarly, if we restrict
$$V(r,\theta)=rs(r) \mathbf{e}_\theta, \quad (r,\theta)\in \Omega \times  \T \subset [1,\oo)\times \T,$$
and write $w(t,r,\theta)=\sum_{k\in\Z}  \hat{w}_k(t,r) e^{ik\theta}=\sum_{k\in \Z} r^{\f12} w_k(t,r) e^{ik\theta}$, we find
\begin{align}
\label{k-mode transport-diffusion equation,rotational type}\left\{
\begin{aligned}
&\partial_t w_k -\nu \Bigl(\pa_r^2-\frac{k^2-\f14}{r^2}\Bigr) w_k + iks(r)w_k= 0, \quad  (t,r) \in \R_+  \times \Omega ,\\
&w_k(t)|_{t=0}=w_{k}(0), \quad w_k|_{\pa\Omega}=0.
\end{aligned}
\right.
\end{align}
Then the same result as Theorem \ref{thm1.4} holds with $n_0$ similarly defined, see for example \cite{CDL24,FMN23}.

\no {\bf Example E1.} Couette flow on bounded domain, see \cite{CLWZ-2D-C}.

Let $\Omega=[0,1], s(y)=y$ and $\hat{w}_k|_{y=0,1}=0$. It's easy to check that $n_0=0$ and thus we have
\begin{equation*}
     \|\hat{w}_k(t)\|_{L^2(\Omega)} \leq Ce^{-c\nu^{\f13}|k|^{\f23}t} \|\hat{w}_k(0)\|_{L^2(\Omega)}.
\end{equation*}

\no {\bf Example E2.} TC flow on bounded annular domains, see \cite{AHL-2}.

 Let $\Omega=[1,R], s(r)=\frac{B}{r^2}$ and $w_k|_{r=1,R}=0$. It's easy to check that $n_0=0$ and thus we have
\begin{equation*}
     \|w_k(t)\|_{L^2(\Omega)} \leq C e^{-c\nu^{\f13}|kB|^{\f23}t/R^2} \|w_k(0)\|_{L^2(\Omega)}.
\end{equation*}

We shall go deeper into these examples in Subsection \ref{1.4}, but now let's investigate \eqref{homogenous eq tc} by virtue of Theorem \ref{thm1.4}. It's easy to see that $\Omega=[1,\oo)$ and $s(r)=\frac{B}{r^2}$ is infinitely degenerate (i.e., $n_0=\oo$). Thus, it seems impossible to obtain
\begin{equation*}\label{pointwise enhanced dissipation}
    \|\wkht(t)\|_{L^2_r} \lesssim e^{-c\nu^{\varsigma}t} \|\wkht(0)\|_{L^2_r}, \quad \text{for some  } \ 0<\varsigma < 1.
\end{equation*}

It's natural to ask: if we replace $\Ltr$ by two different functional  spaces $Y,Z$, could we recover
\begin{equation}\label{1.21}
    \|\wkht(t)\|_{Y} \lesssim \varrho(\nu^\varsigma t) \|\wkht(0)\|_{Z},
\end{equation}
for some  \(0<\varsigma < 1\), and   strictly decreasing function \(\varrho(t)\) that approaches $0$ as $t$  tends to $\infty$?

The answer is positive! A simple corollary of Theorem \ref{main them-1} yields that, for any $t\geq 0$,
\begin{equation*}
    \bigl\|\bigl(1+\frac{\nu^\f13|kB|^\f23 t}{r^2}\bigr) \wkht\bigr\|_{\Ltr} \lesssim \|\wkht(0)\|_\Ltr,
\end{equation*}
which leads to
\begin{equation*}
    \|\wkht\|_{L^2((1,\oo), r^{-4}dr)}  \lesssim \frac{1}{\bigl(1+ \nu^\f13|kB|^\f23 t\bigr)} \|\wkht(0)\|_\Ltr.
\end{equation*}

This is a simple example of \eqref{1.21} with $\varsigma=\f13$, $Y={L^2\bigl((1,\oo), r^{-4}dr\bigr)}$ and $Z=\Ltr$, which indicates that for \eqref{homogenous eq tc}, we can recover an enhanced dissipation scale $\mathcal{O}(\nu^{-\f13}|kB|^{-\f23})$ at the expense of losing spatial weights!

\subsubsection{Gearhart-Prüss type lemma and its applications.}\label{1.4}
We now turn to \eqref{abstracted evolution equation}. A typical decomposition gives $w=w_{h}+w_i$, where
\begin{subequations}
\begin{gather}
   \label{1.19a} \pa_t w_h + \mathcal{L}w_h = 0, \quad w_h(0)=w(0) ,\\
   \label{1.19b}  \pa_t w_i + \mathcal{L}w_i = f, \quad w_i(0)=0 .
\end{gather}
\end{subequations}
As in \cite{Pa}, we call an operator $\mathcal{L}$ in a Hilbert space $H$ to be accretive if
\begin{align}
\label{accretive}\Re\langle \mathcal{L}w, w\rangle_H \geq0,\quad \textrm{for any}\ w\in D(\mathcal{L}).
\end{align}
The operator $\mathcal{L}$ is said to be m-accretive if, in addition, any  $\lambda<0$ belongs to the resolvent set of $\mathcal{L}$ (see \cite{Kato} for more details). The pseudospectral bound of $\mathcal{L}$ is defined as
\begin{align}\label{pseudospectral bound}
\Psi(\mathcal{L}):=\inf\bigl\{\|(\mathcal{L}-i\lambda)w\|_H :w\in D(\mathcal{L}),\ \lambda\in\mathbb{R},\ \|w\|_H=1\bigr\}.
\end{align}

To obtain decaying semigroup bounds from pseudospectral bounds, let us recall the following Gearhart-Prüss type lemma established by Wei in \cite{W21}, (Helffer and Sjöstrand also provide related results in \cite{HS}).

\begin{lemma}[\cite{W21}] \label{GP lemma}Let $\mathcal{L}$ be a m-accretive operator in a Hilbert space $H$ and let $w_h,w_i$ be the solution of \eqref{1.19a}, \eqref{1.19b} respectively. Then there hold
\begin{subequations}
    \begin{gather}
    \|w_h(t)\|_H = \|e^{-t\mathcal{L}} w(0)\|_H\leq e^{-t\Psi(\mathcal{L})+\f{\pi}{2}} \|w(0)\|_H, \quad \textrm{for any} \ t\geq0,\label{1.21 a} \\
     \Psi(\mathcal{L})\|w_i\|_{\LtT(H)} \leq \|f\|_{\LtT(H)} , \quad \textrm{for any} \ T > 0 \label{1.21 b}.
    \end{gather}
\end{subequations}
\end{lemma}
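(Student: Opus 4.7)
The plan is to establish the inhomogeneous bound \eqref{1.21 b} first, through a direct frequency-side argument, and then to derive the pointwise semigroup bound \eqref{1.21 a} by applying \eqref{1.21 b} to a cleverly chosen forcing that recovers $e^{-t\mathcal{L}}w(0)$ itself. Throughout, I may assume $\Psi(\mathcal{L})>0$, since otherwise \eqref{1.21 b} is vacuous, and \eqref{1.21 a} reduces to the contractivity $\|e^{-t\mathcal{L}}\|_{H\to H}\le 1$ supplied by accretivity, which is stronger than the claimed $e^{\pi/2}$ prefactor.

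For \eqref{1.21 b}, the definition of $\Psi(\mathcal{L})$ combined with m-accretivity gives that once $\Psi(\mathcal{L})>0$, the entire imaginary axis lies in the resolvent set of $\mathcal{L}$ with
\begin{equation*}
    \|(\mathcal{L}+i\tau)^{-1}\|_{H\to H}\le \Psi(\mathcal{L})^{-1},\quad \tau\in\R.
\end{equation*}
I would extend $f$ by zero outside $[0,T]$, let $\tilde w$ denote the corresponding mild solution on $\R_+$, and secure $\tilde w\in L^2(\R_+;H)$ by approximating with the regularized problem $\partial_t\tilde w^\varepsilon+(\mathcal{L}+\varepsilon)\tilde w^\varepsilon=f$, which enjoys an obvious exponentially decaying tail. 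Taking the time Fourier transform of $\partial_t\tilde w+\mathcal{L}\tilde w=f$ then produces $(i\tau+\mathcal{L})\hat{\tilde w}(\tau)=\hat f(\tau)$, and the resolvent bound together with Plancherel yields $\|\tilde w\|_{L^2(\R_+;H)}\le \Psi(\mathcal{L})^{-1}\|f\|_{\LtT(H)}$. Since $\tilde w=w_i$ on $[0,T]$, this is exactly \eqref{1.21 b}.

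The more delicate part is \eqref{1.21 a}. Setting $u(t)=e^{-t\mathcal{L}}w(0)$, a short Duhamel computation shows that the forcing $f(s)=\chi_{[0,T]}(s)u(s)$ produces the inhomogeneous solution $w_i(t)=\min(t,T)\,u(t)$. Feeding this into \eqref{1.21 b} therefore gives the weighted bound $\|t\,u(t)\|_{\LtT(H)}\le \Psi(\mathcal{L})^{-1}\|u(t)\|_{\LtT(H)}$. Since $t\mapsto\|u(t)\|_H$ is non-increasing by accretivity, the weighted estimate localizes at the endpoint and delivers a polynomial bound of the form $\|u(T)\|_H\lesssim (\Psi(\mathcal{L})T)^{-1}\|w(0)\|_H$. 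Iterating over successive time slabs of well-chosen length then converts the polynomial decay into exponential decay with rate $\Psi(\mathcal{L})$, up to a multiplicative constant.

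The hard part is the explicit sharp constant $e^{\pi/2}$. A crude iteration of the scheme above either loses in the rate, giving $C e^{-ct}$ with $c<\Psi(\mathcal{L})$, or inflates the prefactor $C$ when one tries to push $c\to\Psi(\mathcal{L})$; already the simple choice of slab length balancing $T$ against $\Psi(\mathcal{L})$ produces an $e$ factor and a rate $\Psi(\mathcal{L})/e$, not the desired $\Psi(\mathcal{L})$. Attaining the optimal exponential rate together with the clean prefactor $e^{\pi/2}$ requires a finer iteration, carefully balancing the weight $t$ against an optimized partition of the time axis so that the accumulated constants telescope into a closed-form expression whose logarithm is exactly $\pi/2$. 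This is the combinatorial-analytic heart of Wei's proof in \cite{W21}, and the step I expect to demand the most care.
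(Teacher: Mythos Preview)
The paper does not prove this lemma; it is quoted from Wei \cite{W21} and used as a black box, so there is no in-paper argument to compare your proposal against.

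That said, your outline is a faithful sketch of the approach in \cite{W21}. The Plancherel--resolvent route to \eqref{1.21 b} is exactly the mechanism used there (and also, in the special case $\mathcal{L}=\mathcal{T}$, the mechanism the present paper employs in Lemma \ref{abstract ineq inhomogenous equations-low mode}). For \eqref{1.21 a}, the Duhamel identity you write down---taking $f(s)=e^{-s\mathcal{L}}w(0)$ so that $w_i(t)=t\,e^{-t\mathcal{L}}w(0)$---combined with the monotonicity of $t\mapsto\|e^{-t\mathcal{L}}w(0)\|_H$ from accretivity is precisely how Wei converts \eqref{1.21 b} into a polynomial decay bound and then iterates over time slabs to reach exponential decay.

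You are right that the sharp prefactor $e^{\pi/2}$ is the delicate point, and honest that your sketch does not reach it: the crude slab iteration you describe produces either a weaker rate or an uncontrolled constant, as you note. Wei's actual extraction of $e^{\pi/2}$ is not the ``telescoping partition'' you suggest but goes through a sharper integral inequality for the non-increasing function $g(t)=\|e^{-t\mathcal{L}}w(0)\|_H$; the constant $\pi/2$ enters through an explicit optimization rather than a combinatorial sum. Since the paper under review does not reproduce that computation either, this is not a discrepancy with the paper---only an acknowledged gap in your plan relative to the cited source.
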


Below let's go back to {\bf Examples E1, E2}, and review their relationships with Lemma \ref{GP lemma}.

\no{\bf Example E1}. Couette flow.

We first review the relevant results regarding 2D Couette flow in $\mathbb{T}\times\mathbb{R}$ and $\mathbb{T}\times[0,1]$. Let us  recall  the linearized equation near the 2D Couette flow \cite{BGM-bams,BH,BMV,BWV,CLWZ-2D-C} based on the Navier-Stokes equations as follows:
\begin{align*}
\left\{
\begin{aligned}
 &\partial_tw^{\mathcal{C}}- \nu(\partial_x^2+\partial_y^2)w^{\mathcal{C}}+y\partial_xw^{\mathcal{C}} = f, \quad (t,x,y)\in \R_+ \times\mathbb{T} \times\Omega,\\
 & w^\mathcal{C}|_{t=0} =w(0),\quad w^\mathcal{C}|_{\pa \Omega}=0,
\end{aligned}
\right.
\end{align*}
where $\Omega=\mathbb{R}$ or $[0,1]$. By setting $\hat{w}^\mathcal{C}=\sum_{k\in \Z}\wkc e^{ikx}$ and $f=\sum_{k\in \Z}\hat{f}_k e^{ikx}$, we obtain the following $k$-mode linearized equation:
\begin{align*}
\left\{
\begin{aligned}
 & \partial_t\wkc- \nu(\partial_y^2-k^2)\wkc+iky\wkc = \hat{f}_k, \quad (t,y)\in \R_+ \times \Omega,\\
 & \wkc|_{t=0} =\hat{w}_k(0),\quad \wkc|_{\pa \Omega}=0.
\end{aligned}
\right.
\end{align*}
We denote
\begin{equation}\begin{split}
\label{the linearized operator for c} &\qquad \quad  \mathcal{C}:= - \nu(\partial_y^2-k^2)+iky,\quad H:=\LtO,\\
& D(\mathcal{C}):= \bigl\{w\in \LtO: w\in H^2(\Omega)\cap H^1_0(\Omega),\  yw \in \LtO \bigr\},
\end{split}\end{equation}
and decompose \(\wkc=\wkhc + \wkic \) as in \eqref{1.19a} and \eqref{1.19b}. It's easy to check that $\mathcal{C}$ is m-accretive in $H$.

Below we recall the resolvent estimate corresponding to \(\mathcal{C}\):
\begin{lemma}[\cite{CLWZ-2D-C}] \label{resolvent estimate-c}
For any $k\in \Z\backslash\{0\}$, $|k|\gg \nu>0$ and $\lambda\in\mathbb{R}$, there exists constant $C>0$  independent of $\nu,k,\lambda$, so that
\begin{align*}
   (\nu k^2)^{1/3} \|w\|_{\LtO} \leq C \|(\mathcal{C}-i\lambda)w\|_{\LtO}.
\end{align*}
\end{lemma}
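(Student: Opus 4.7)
My plan is to combine two energy estimates so as to exploit the diffusion--advection balance at the Airy scale $\delta := (\nu/|k|)^{1/3}$. Set $g := (\mathcal{C} - i\lambda)w$ and $y_0 := \lambda/k$, so that $g = -\nu w'' + \nu k^{2} w + ik(y-y_0)w$. Taking $\Re\langle g, w\rangle_{\LtO}$ and integrating by parts against the Dirichlet condition produces
\[
\nu \|w'\|_{\LtO}^{2} + \nu k^{2} \|w\|_{\LtO}^{2} \leq \|g\|_{\LtO}\|w\|_{\LtO}.
\]
This by itself only gives a bound at the heat scale $\nu^{-1/2}$, which is too weak, but it already yields the Gagliardo--Nirenberg bound $\|w\|_{L^{\infty}(\Omega)}^{2} \lesssim \|w\|_{\LtO}\|w'\|_{\LtO}$.

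Next, I would pair $g$ with $(y-y_0)w$ (well-defined thanks to $yw\in\LtO$ in $D(\mathcal{C})$) and take the imaginary part. The contribution from $-\nu w''$ reduces after one integration by parts to $-\nu\Im\int w'\overline{w}\,dy$, bounded by $\nu\|w'\|_{\LtO}\|w\|_{\LtO}$, while the term from $\nu k^{2}w$ is real. This yields the transport estimate
\[
|k|\,\|(y-y_0)w\|_{\LtO}^{2} \leq \|g\|_{\LtO}\|(y-y_0)w\|_{\LtO} + \nu\|w'\|_{\LtO}\|w\|_{\LtO}.
\]
Splitting $\|w\|_{\LtO}^{2}$ according to $|y-y_0|\lessgtr\delta$, I would bound the outer piece by $\delta^{-2}\|(y-y_0)w\|_{\LtO}^{2}$ via the transport estimate, and the inner piece by $2\delta\|w\|_{L^{\infty}(\Omega)}^{2}$ via Gagliardo--Nirenberg (extending $w$ by zero to $\mathbb{R}$ when $\Omega=[0,1]$). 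Feeding both back into the sum and optimizing in $\delta$ selects $\delta\sim (\nu/|k|)^{1/3}$ and delivers $(\nu k^{2})^{1/3}\|w\|_{\LtO}\lesssim \|g\|_{\LtO}$.

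For $\Omega=[0,1]$ a preliminary dichotomy streamlines the argument: if $\mathrm{dist}(y_0,[0,1])\gtrsim \delta$, the transport estimate alone gives $|k|\,\mathrm{dist}(y_0,[0,1])^{2}\|w\|_{\LtO}^{2}\lesssim \|g\|_{\LtO}\|w\|_{\LtO}$, which already beats the target bound; otherwise one runs the dyadic split above, with the Dirichlet boundary condition ensuring that the pairing and integrations by parts are free of boundary terms. The main obstacle I foresee is the bookkeeping in the final optimization: the cross term $\nu\|w'\|_{\LtO}\|w\|_{\LtO}$ appearing in the transport estimate must be absorbed into the left-hand side via Young's inequality at precisely the right moment, and one has to verify that the choice $\delta\sim(\nu/|k|)^{1/3}$ is consistent with the regime $|k|\gg\nu$ so that $\delta\ll 1$. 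Modulo these checks, the argument is a clean interpolation between the heat and transport boundary layers at the Airy scale.
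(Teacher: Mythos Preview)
Your proposal is correct and mirrors the paper's own argument (Appendix, Lemma~\ref{prop:R-navier-v1}) almost exactly: the same pairings $\Re\langle g,w\rangle$ and $\Im\langle g,(y-y_0)w\rangle$, the same near/far split at the Airy scale $\delta=\nu^{1/3}|k|^{-1/3}$, and the same Gagliardo--Nirenberg step. The preliminary dichotomy you propose for $\Omega=[0,1]$ is unnecessary---the paper simply takes $E=\Omega\cap(\lambda-\delta,\lambda+\delta)$, which handles both cases at once---but it does no harm.
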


\begin{proof}
    The  details can be found directly in Lemma \ref{eq:R-w-navier} in the appendix.
\end{proof}

Actually by combining Lemma \ref{resolvent estimate-c} with Lemma \ref{eq:R-w-navier-2}, it could be shown that
$ (\nu k^2)^{1/3}$ is  optimal for low frequency $k$, i.e.,
\begin{align}\label{LC}
\Psi(\mathcal{C})\sim  (\nu k^2)^{1/3}=:\Phi(\mathcal{C}).
\end{align}
With Lemmas \ref{GP lemma} and \ref{resolvent estimate-c} at hand, we then obtain:
\begin{lemma}[\cite{CLWZ-2D-C}]
    For any $k\in \Z\backslash\{0\}$, $|k|\gg \nu>0$, $T>0$, there exist constants $C,c>0$  independent of $\nu,k,T$, so that
\begin{subequations}
    \begin{gather}
\label{1.29a}\bigl\|e^{c\Phi(\mathcal{C}) t} \wkhc\bigr\|_{\LoT(\LtO)} + \Phi^{1/2}(\mathcal{C}) \bigl\|e^{c\Phi(\mathcal{C}) t} \wkhc\bigr\|_{\LtT(\LtO)} \leq C \|\hat{w}_k(0)\|_{\LtO},\\
\Phi(\mathcal{C}) \bigl\|e^{c\Phi(\mathcal{C}) t} \wkic\bigr\|_{\LtT(\LtO)} \leq C \bigl\|e^{c\Phi(\mathcal{C}) t}\hat{f}_k\bigr\|_{\LtT(\LtO)}.
    \end{gather}
\end{subequations}
\end{lemma}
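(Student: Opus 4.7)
The overall strategy is to apply Wei's Gearhart--Prüss lemma (Lemma \ref{GP lemma}) directly to $\mathcal{C}$, whose m-accretivity on $\LtO$ is straightforward (the real part equals $-\nu(\partial_y^2-k^2)$, which is positive), and then to carry the exponential weight $e^{c\Phi(\mathcal{C})t}$ on the outside. I would deliberately \emph{avoid} recasting the problem as Gearhart--Prüss applied to the shifted generator $\mathcal{C}-c\Phi(\mathcal{C})$, since that operator loses m-accretivity in the regime $\nu k^2\ll \Phi(\mathcal{C})$; instead the weight is absorbed through pointwise semigroup bounds and, for the forcing term, a Young convolution argument.

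For the first (homogeneous) estimate, combining \eqref{1.21 a} of Lemma \ref{GP lemma} with Lemma \ref{resolvent estimate-c}, namely $\Psi(\mathcal{C})\geq c_0\Phi(\mathcal{C})$, gives
\[
\|\wkhc(t)\|_{\LtO}=\|e^{-t\mathcal{C}}\hat{w}_k(0)\|_{\LtO}\leq e^{-t\Psi(\mathcal{C})+\pi/2}\|\hat{w}_k(0)\|_{\LtO}\leq e^{\pi/2}e^{-c_0\Phi(\mathcal{C})t}\|\hat{w}_k(0)\|_{\LtO}.
\]
Choosing $c=c_0/2$, multiplying by $e^{c\Phi(\mathcal{C})t}$ yields the $\LoT(\LtO)$ bound in \eqref{1.29a}. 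Squaring and integrating in $t\in(0,T)$, with $\int_0^T e^{-2c\Phi(\mathcal{C})t}\,dt\leq(2c\Phi(\mathcal{C}))^{-1}$, immediately produces the $\Phi(\mathcal{C})^{1/2}$-weighted $\LtT(\LtO)$ piece.

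For the inhomogeneous estimate, I would use the Duhamel representation
\[
\wkic(t)=\int_0^t e^{-(t-s)\mathcal{C}}\hat{f}_k(s)\,ds,
\]
and split the weight $e^{c\Phi(\mathcal{C})t}=e^{c\Phi(\mathcal{C})(t-s)}\cdot e^{c\Phi(\mathcal{C})s}$ to obtain
\[
e^{c\Phi(\mathcal{C})t}\|\wkic(t)\|_{\LtO}\leq e^{\pi/2}\int_0^t e^{-(t-s)(\Psi(\mathcal{C})-c\Phi(\mathcal{C}))}\bigl(e^{c\Phi(\mathcal{C})s}\|\hat{f}_k(s)\|_{\LtO}\bigr)\,ds.
\]
The right-hand side is a convolution in $t$ of the nonnegative kernel $K(t)=e^{\pi/2}e^{-t(c_0-c)\Phi(\mathcal{C})}\mathbf{1}_{t\geq0}$, which satisfies $\|K\|_{L^1_t}\leq C\Phi(\mathcal{C})^{-1}$, against $e^{c\Phi(\mathcal{C})s}\|\hat{f}_k(s)\|_{\LtO}$. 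Young's convolution inequality then gives $\Phi(\mathcal{C})\|e^{c\Phi(\mathcal{C})t}\wkic\|_{\LtT(\LtO)}\leq C\|e^{c\Phi(\mathcal{C})t}\hat{f}_k\|_{\LtT(\LtO)}$, which is precisely \eqref{1.29a}'s inhomogeneous counterpart.

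The main obstacle is essentially notational rather than analytical: once the sharp resolvent bound $\Psi(\mathcal{C})\gtrsim(\nu k^2)^{1/3}$ of Lemma \ref{resolvent estimate-c} is known, the Gearhart--Prüss lemma reduces both estimates to convolution arithmetic. The only point that requires some care is the temptation to reformulate the inhomogeneous estimate as a direct application of \eqref{1.21 b} to the shifted operator $\mathcal{C}-c\Phi(\mathcal{C})$: this does not fall within the m-accretive framework of Lemma \ref{GP lemma} when $c\Phi(\mathcal{C})>\nu k^2$, but the Duhamel-plus-Young approach above is insensitive to this issue and preserves the correct dependence on $\Phi(\mathcal{C})$ and on the constants $C,c>0$ uniformly in $\nu,k,T$.
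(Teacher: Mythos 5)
Your proposal is correct and follows the same route the paper indicates: combine the Gearhart--Pr\"uss Lemma \ref{GP lemma} with the resolvent bound of Lemma \ref{resolvent estimate-c} (i.e.\ $\Psi(\mathcal{C})\gtrsim \Phi(\mathcal{C})$), the paper itself deferring the details to \cite{CLWZ-2D-C}. Your Duhamel-plus-Young treatment of the forcing term is a sound, self-contained way to carry the weight $e^{c\Phi(\mathcal{C})t}$ without invoking \eqref{1.21 b} for the shifted operator $\mathcal{C}-c\Phi(\mathcal{C})$, whose m-accretivity can indeed fail when $c\Phi(\mathcal{C})>\nu k^2$; it yields the same estimate with uniform constants.
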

\begin{proof}
    It follows from Lemma \ref{GP lemma} and Lemma \ref{resolvent estimate-c}. One may see \cite{CLWZ-2D-C} for details.
\end{proof}

\no{\bf Example E2.} TC flow on bounded annular domains.

In this example, we review the relevant results regarding 2D TC flow on bounded annular domains $\{x\in \R^2: 1\leq |x| \leq R<\oo\}$, instead of exterior domain $\Omega_e$. We recall from \cite{AHL-2} the following $k$-mode linearized equation:
\begin{align*}
\left\{
\begin{aligned}
 & \partial_t\wktt+ \widetilde{\mathcal{T}}\wktt = f_k, \quad (t,r)\in \R_+ \times \Omega,\\
 & \wktt|_{t=0} =w_k(0),\quad \wktt|_{\pa \Omega}=0,
\end{aligned}
\right.
\end{align*}
where $\Omega= [1,R]$ and we denote
\begin{equation*}\begin{split}
& \widetilde{\mathcal{T}}= - \nu\bigl(\partial_r^2-(k^2-1/4)/r^2\bigr)+ikB /r^2, \quad H:=\LtO, \quad D(\widetilde{\mathcal{T}}):=  H^2(\Omega)\cap H^1_0(\Omega).
\end{split}\end{equation*}

\begin{lemma}[\cite{AHL-2}]\label{resolvent estimate-t-bounded domain}
For any $k\in \mathbb{Z}\backslash\{0\}$, $|kB|\gg \nu >0$, $1<R <\oo$ and $\lambda\in\mathbb{R}$, there exists constant $C>0$  independent of $\nu,k,B,\lambda,R$, so that
\begin{align}
 \label{resolvent estimate-tc-bounded domain} \nu^{1/3} |kB|^{2/3}\|w/r\|_{\LtO} \leq C \|r(\widetilde{\mathcal{T}}-i\lambda)w\|_{\LtO}.
\end{align}
\end{lemma}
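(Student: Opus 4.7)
The plan is to prove the resolvent bound by transforming the operator $\widetilde{\mathcal{T}} - i\lambda$ to a standard Airy-type pseudospectral problem on a flat interval, where the effective shear has constant slope at its critical point and the geometric weights $1/r$ and $r$ are absorbed into the change of variables.

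I would first introduce the logarithmic coordinate $\rho := \log r \in [0, \log R]$ and the rescaled unknowns $\tilde{w}(\rho) := r^{-1/2}w(r)$ and $\tilde{F}(\rho) := r^{3/2}F(r)$, where $F := (\widetilde{\mathcal{T}} - i\lambda)w$. A direct computation using $\partial_r = r^{-1}\partial_\rho$ shows that $r^{3/2}\partial_r^2 w = \tilde{w}_{\rho\rho} - \tfrac{1}{4}\tilde{w}$, so multiplying the equation by $r^{3/2}$ puts it in the form
$$-\nu\tilde{w}_{\rho\rho} + \nu k^2 \tilde{w} - i\mu(\rho)\tilde{w} = \tilde{F}, \qquad \mu(\rho) := \lambda e^{2\rho} - kB,$$
on $[0, \log R]$ with Dirichlet boundary conditions. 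A change of measure gives the norm identities $\|\tilde{w}\|_{L^2(d\rho)} = \|w/r\|_\LtO$ and $\|\tilde{F}\|_{L^2(d\rho)} = \|rF\|_\LtO$, so the claim reduces to the flat resolvent bound $\nu^{1/3}|kB|^{2/3}\|\tilde{w}\|_{L^2(d\rho)} \leq C\|\tilde{F}\|_{L^2(d\rho)}$.

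The crucial geometric observation is that $\mu'(\rho_*) = 2kB$ at the unique zero $\rho_* := \tfrac{1}{2}\log(kB/\lambda)$ of $\mu$, regardless of where $\rho_*$ lies, and that
$$|\mu(\rho)| = |kB|\bigl|e^{2(\rho-\rho_*)} - 1\bigr| \geq c|kB|\min(|\rho-\rho_*|, 1).$$
I would then pair the transformed equation with $\tilde{w}$ and split into real and imaginary parts:
$$\Re\langle\tilde{F}, \tilde{w}\rangle = \nu\|\tilde{w}_\rho\|_{L^2}^2 + \nu k^2\|\tilde{w}\|_{L^2}^2, \qquad \Im\langle\tilde{F}, \tilde{w}\rangle = -\int \mu(\rho) |\tilde{w}|^2\, d\rho.$$
If $\rho_* \notin [0, \log R]$ or $\lambda kB \leq 0$, then $|\mu| \gtrsim |kB|$ globally and the imaginary identity alone yields a bound stronger than what is needed under the hypothesis $|kB| \gg \nu$. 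In the main case, I would pick a scale $\delta > 0$ and split the domain into $I_\delta := [\rho_*-\delta, \rho_*+\delta]$ and its complement. On $I_\delta^c$ the lower bound $|\mu| \gtrsim |kB|\delta$ together with the imaginary identity gives $\int_{I_\delta^c}|\tilde{w}|^2 \leq C(|kB|\delta)^{-1}|\langle\tilde{F},\tilde{w}\rangle|$. On $I_\delta$, the Dirichlet pointwise bound $|\tilde{w}(\rho)|^2 \leq 2\|\tilde{w}_\rho\|_{L^2}\|\tilde{w}\|_{L^2}$ combined with the real-part identity $\nu\|\tilde{w}_\rho\|_{L^2}^2 \leq |\langle\tilde{F},\tilde{w}\rangle|$ yields $\int_{I_\delta}|\tilde{w}|^2 \leq 4\delta\nu^{-1/2}|\langle\tilde{F},\tilde{w}\rangle|^{1/2}\|\tilde{w}\|_{L^2}$. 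Young's inequality then produces
$$\|\tilde{w}\|_{L^2}^2 \leq C\bigl((|kB|\delta)^{-1} + \delta^2\nu^{-1}\bigr)|\langle\tilde{F}, \tilde{w}\rangle|,$$
and optimizing with $\delta = (\nu/|kB|)^{1/3}$, which is $\ll 1$ by the hypothesis $|kB| \gg \nu$, together with Cauchy--Schwarz on $|\langle\tilde{F},\tilde{w}\rangle|$, closes the estimate.

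The main obstacle is uniformity in $R$, since the transformed domain $[0, \log R]$ grows without bound as $R \to \infty$. This is resolved by observing that the Airy-type argument is entirely local in a window of size $\delta \ll 1$ around $\rho_*$, while the $H^1 \hookrightarrow L^\infty$ pointwise inequality and the Taylor-type lower bound on $|\mu|$ are both independent of the length of the interval. A secondary issue arises in the boundary regime where $\rho_*$ lies within distance $\delta$ of an endpoint of $[0, \log R]$: the same bounds still apply because the slope $\mu'(\rho_*) = 2kB$ is intrinsic and the Dirichlet condition at the nearby endpoint only strengthens the pointwise estimate.
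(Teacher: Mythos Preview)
Your logarithmic change of variables $\rho=\log r$, $\tilde w=r^{-1/2}w$, $\tilde F=r^{3/2}F$ is correct and elegant: it converts the weighted resolvent inequality into the flat bound $\nu^{1/3}|kB|^{2/3}\|\tilde w\|_{L^2(d\rho)}\le C\|\tilde F\|_{L^2(d\rho)}$ for the Schr\"odinger-type operator $-\nu\partial_\rho^2+\nu k^2-i\mu(\rho)$ with $\mu(\rho)=\lambda e^{2\rho}-kB$. The paper does not prove this lemma in place (it cites \cite{AHL-2}), but its proof of the closely related Lemma~\ref{resolvent estimate-1} on $[1,\infty)$ works directly in the $r$ variable and keeps track of the weights $r^{\pm1}$ throughout, splitting into the cases $\lambda\le0$, $\lambda\ge1$, $\lambda\in(0,1)$ and using a smooth sign multiplier $\rho_\delta$. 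Your route packages all the geometry into the single coordinate change, after which the problem is a standard flat near/far-field split; this is a genuinely different and arguably cleaner argument.

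There is, however, one real gap in your sketch. In the main case $\rho_*\in(0,\log R)$ the function $\mu$ \emph{changes sign} at $\rho_*$, so the imaginary identity $\Im\langle\tilde F,\tilde w\rangle=-\int\mu|\tilde w|^2$ does \emph{not} by itself bound $\int_{I_\delta^c}|\tilde w|^2$: the two half-integrals $\int_{\rho<\rho_*}$ and $\int_{\rho>\rho_*}$ have opposite signs and can cancel. You need an additional multiplier to separate them---either pair with $\chi_\delta\tilde w$ for a smoothed sign function $\chi_\delta(\rho)\approx\mathrm{sgn}(\rho-\rho_*)$ supported outside $I_\delta$ (this is exactly what the paper does with $\rho_\delta$ in Lemma~\ref{resolvent estimate-1}), picking up a harmless commutator $\nu\delta^{-1}\|\tilde w_\rho\|\|\tilde w\|$; or pair with $\mu\tilde w$ in the spirit of the Couette argument in the appendix, though then you must control $\mu'=2\lambda e^{2\rho}$, which is not uniformly bounded in $R$. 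The first fix integrates seamlessly into your scheme and, after Young's inequality with the same $\delta=(\nu/|kB|)^{1/3}$, closes exactly as you wrote. A secondary quibble: the claim that $|\mu|\gtrsim|kB|$ ``globally'' whenever $\rho_*\notin[0,\log R]$ is too strong (take $\rho_*$ just outside an endpoint), but your last paragraph already handles this boundary regime correctly, so the same near/far split with $\rho_*$ virtual covers it.
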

\begin{proof}
Details can be found in Proposition 3.2 from \cite{AHL-2}.
\end{proof}
We decompose $\wktt=\wkhtt+\wkitt$ as in \eqref{1.19a}, \eqref{1.19b} and denote $\tilde{X}_1:=L^2\bigl((1,R),rdr\bigr)$, $\tilde{X}_2:=L^2\bigl((1,R),\frac{dr}{r}\bigr)$. In view of Lemmas \ref{resolvent estimate-t-bounded domain} and \ref{resolvent estimate-2},  the pseudospectral bound $\nu^{1/3} |kB|^{2/3}$ is sharp for low frequency $k$, i.e.,
\begin{align}\label{LT-bounded domain}
\Psi\bigl(\widetilde{\mathcal{T}}(\tilde{X}_1\to \tilde{X}_2)\bigr)\sim  \nu^{1/3} |kB|^{2/3}=:\Phi(\widetilde{\mathcal{T}}).
\end{align}
Since on $[1,R]$, $\tilde{X}_1=\tilde{X}_2=\LtO$,  we can still use $\Phi(\widetilde{\mathcal{T}})$  to characterize the decay of $\wkhtt$ and $\wkitt$. That is:
\begin{lemma}[\cite{AHL-2}]
  For any $k\in \mathbb{Z}\backslash\{0\}$, $|kB|\gg \nu >0$, $1<R <\oo$ and $\lambda\in\mathbb{R}$, there exist constants $C,c>0$  independent of $\nu,k,B,\lambda,R$, so that
\begin{subequations}
\begin{gather}
   \label{pointwise enhanced dissipation tc-bounded domain}
    \bigl\|e^{\frac{c}{R^2}\Phi(\widetilde{\mathcal{T}}) t} \wkhtt\bigr\|_{\LoT(\LtO)} + R^{-1}\Phi^{1/2} (\widetilde{\mathcal{T}}) \bigl\|e^{\frac{c}{R^2}\Phi(\widetilde{\mathcal{T}}) t} \wkhtt\bigr\|_{\LtT(\LtO)} \leq C \|w_k(0)\|_{\LtO},\\
    \label{integrated enhanced dissipation tc-bounded domain}   R^{-1}\Phi(\widetilde{\mathcal{T}})
    \bigl\|e^{\frac{c}{R^2}\Phi(\widetilde{\mathcal{T}}) t} \wkitt\bigr\|_{\LtT(\LtO)} \leq C R\|e^{\frac{c}{R^2}\Phi(\widetilde{\mathcal{T}}) t}f_k\|_{\LtT(\LtO)}.
\end{gather}
\end{subequations}
\end{lemma}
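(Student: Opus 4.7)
\textbf{Proof plan for Theorem \ref{main thm 3}.}

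The idea is to construct a counterexample sequence \((w_n, f_n)\) by placing a smooth forcing \(f_n\) in a region of \(\Omega\) where \(\phi\) is small, and exploiting the full-support hypothesis on \(p_{\mathcal{L}}\) to force \(w_n\) to carry a fixed amount of mass into a region where \(\phi\) is large. Migrating the forcing arbitrarily far into the future then causes the weighted \(L^2\) ratio to grow exponentially in \(n\), so that the limsup in \eqref{false main thm 3} is forced to be \(+\infty\).

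Since \(\phi\) is continuous and non-constant, I would first pick \(x_1, x_2 \in \Omega\) with \(\phi(x_1) < \phi(x_2)\), and by continuity fix \(r > 0\) and constants \(c_1 < c_2\) such that \(\overline{B(x_1, r)}, \overline{B(x_2, r)} \subset \Omega\), \(\phi \le c_1\) on \(B(x_1, r)\), and \(\phi \ge c_2\) on \(B(x_2, r)\). By the full-support hypothesis, the open set \((\tfrac{1}{2}, \tfrac{3}{2}) \times B(x_2, r) \times B(x_1, r)\) must contain a point \((t_*, x_*, y_*)\) at which \(p_{\mathcal{L}}(t_*,x_*,y_*) \neq 0\). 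Continuity of \(p_{\mathcal{L}}\) then yields \(\delta, \eta > 0\) so that on the box \(B^* := (t_* - \eta, t_* + \eta) \times B(x_*, \delta) \times B(y_*, \delta) \subset \R_+ \times B(x_2, r) \times B(x_1, r)\), \(p_{\mathcal{L}}\) keeps a fixed sign and \(|p_{\mathcal{L}}| \ge \beta\) for some \(\beta > 0\).

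Next I would fix nonnegative bumps \(\chi \in C_c^{\infty}((-\delta_0, \delta_0))\) and \(\psi \in C_c^{\infty}(B(y_*, \delta))\) with \(\delta_0 < \min(\eta, 1/2)\) and \(\int \chi = \int \psi = 1\), and define \(f_n(s, y) := \chi(s - n)\psi(y)\) for \(n \ge 1\); these clearly lie in \(C_c^{\infty}(\R_+ \times \Omega)\). For \(t \in I_n := (n + t_* - \eta + \delta_0,\, n + t_* + \eta - \delta_0)\) and \(x \in B(x_*, \delta)\), the Duhamel integral \eqref{duhamel formula} runs exactly over a slice of \(B^*\), so \(|w_n(t,x)| \ge \beta\) on \(I_n \times B(x_*, \delta)\) uniformly in \(n\). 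Since \(\phi \ge c_2\) on \(B(x_*, \delta)\), \(|I_n| = 2(\eta - \delta_0)\) is independent of \(n\), and \(a_1\) is continuous and positive, one obtains
\begin{equation*}
\|e^{t\phi(x)}\, a_1(x)\, w_n\|_{L^2((0,\infty)\times\Omega)}^2 \;\ge\; \beta^2 \int_{I_n}\!\int_{B(x_*,\delta)} e^{2t\phi(x)}\, a_1(x)^2\, dx\, dt \;\ge\; C_1\, e^{2n c_2},
\end{equation*}
with \(C_1>0\) independent of \(n\).

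Conversely, since \(\mathrm{supp}(f_n) \subset (n - \delta_0, n + \delta_0) \times B(y_*, \delta)\) and \(\phi \le c_1\) on \(B(y_*, \delta)\), a direct computation using continuity and positivity of \(a_2\) gives \(\|e^{t\phi(x)}\, a_2(x)\, f_n\|_{L^2((0,\infty)\times\Omega)}^2 \le C_2\, e^{2n c_1}\) with \(C_2>0\) independent of \(n\), which in particular yields \eqref{1.15}. Combining the two estimates, the ratio in \eqref{false main thm 3} is bounded below by \((C_1/C_2)^{1/2} e^{n(c_2 - c_1)} \to \infty\), establishing the claimed blow-up. The main obstacle I anticipate is the second step: leveraging the full-support hypothesis together with smoothness of \(p_{\mathcal{L}}\) to produce a \emph{uniform, sign-definite} lower bound for \(p_{\mathcal{L}}\) on a space-time box whose source ball sits inside \(\{\phi \le c_1\}\) and whose target ball sits inside \(\{\phi \ge c_2\}\); once this is arranged, the remaining norm estimates are elementary bookkeeping for the explicit bump \(f_n\).
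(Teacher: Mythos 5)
Your proposal does not address the statement you were asked to prove. The statement is the lemma (quoted from \cite{AHL-2}) asserting \emph{positive} exponential enhanced-dissipation bounds \eqref{pointwise enhanced dissipation tc-bounded domain} and \eqref{integrated enhanced dissipation tc-bounded domain} for the solutions $\wkhtt$, $\wkitt$ of the linearized TC equations on the bounded annulus $[1,R]$, with decay rate $\frac{c}{R^2}\Phi(\widetilde{\mathcal{T}})$. What you have written is a construction of a counterexample sequence $(w_n,f_n)$ for an abstract evolution equation with an everywhere-supported fundamental solution, i.e.\ a proof plan for Theorem \ref{main thm 3} (the negative result \eqref{false main thm 3}), which is a different — indeed opposite in spirit — assertion. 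Nothing in your argument produces the semigroup bounds for $\widetilde{\mathcal{T}}$ on $[1,R]$, and translating the forcing to later and later times cannot yield an estimate that must hold for every solution and every $T>0$.

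For the record, the intended route (the one followed in \cite{AHL-2}, to which the paper simply refers in its proof, namely Propositions 4.3 and 4.5 there) is: establish that $\widetilde{\mathcal{T}}$ is m-accretive on $\LtO$ with domain $H^2(\Omega)\cap H^1_0(\Omega)$; use the resolvent estimate of Lemma \ref{resolvent estimate-t-bounded domain} — noting that on the bounded annulus the weights $r$ and $r^{-1}$ are comparable to $1$ up to factors of $R$, so the pseudospectral bound \eqref{LT-bounded domain} transfers to the unweighted space at the cost of $R^{-2}$ — and then invoke the Gearhart--Pr\"uss type Lemma \ref{GP lemma}, estimates \eqref{1.21 a} and \eqref{1.21 b}, applied to the shifted operator to absorb the exponential weight $e^{\frac{c}{R^2}\Phi(\widetilde{\mathcal{T}})t}$. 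Your counterexample machinery (full support of $p_{\mathcal{L}}$, translated bump forcings, growth of weighted ratios) is relevant to Theorems \ref{main them-2} and \ref{main thm 3} of the paper, but it cannot substitute for the resolvent-to-semigroup argument needed here.
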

\begin{proof}
Details can be found in Propositions 4.3 and 4.5 from \cite{AHL-2}.
\end{proof}

Now for TC flow on exterior domain, $\Omega=(1,\oo)$, i.e., $R=\oo$ in {\bf Example E2}. In this case, we still get, by
using Lemma \ref{resolvent estimate-1}, that
\begin{align}\label{LT-extrior domain}
\Psi\bigl(\mathcal{T}(X_1\to X_2)\bigr)\sim  \nu^{1/3} |kB|^{2/3}=:\Phi(\mathcal{T}),
\end{align}
where $X_1 := L^2\big((1,\oo),rdr\big),X_2 := L^2\big((1,\oo),\frac{dr}{r}\big)$. However in this case $X_1\neq X_2$, thus Lemma \ref{GP lemma} is no longer applicable. But in view of \eqref{pointwise enhanced dissipation tc-bounded domain} and \eqref{integrated enhanced dissipation tc-bounded domain}, it's natural to ask the following two questions:
\begin{itemize}
     \item[{\bf Q1}:] Does there exist some \( c > 0 \) so that
\begin{align}
  \label{Q1-1}  \bigl\|e^{c\Phi(\mathcal{T}) t/r^2} \wkht\bigr\|_{\LoT(\Ltr)} + \Phi^{1/2}(\mathcal{T}) \bigl\|r^{-1}e^{c\Phi(\mathcal{T}) t/r^2} \wkht\bigr\|_{\LtT(\Ltr)} \leq C \|w_k(0)\|_{\Ltr}?
\end{align}

   \item[{\bf Q2}:] Does there exist some \( c > 0 \) so that
\begin{align}
   \label{Q2-1} \Phi(\mathcal{T}) \bigl\|r^{-1}e^{c\Phi(\mathcal{T}) t/r^2} \wkit\bigr\|_{\LtT(\Ltr)} \leq C \|e^{c\Phi(\mathcal{T}) t/r^2}rf_k\|_{\LtT(\Ltr)}?
\end{align}
 \end{itemize}

Another motivation of this paper is to answer {\bf Q1} and {\bf Q2}.  We don't have an exact answer to {\bf Q1}. However if we take $a_1(r)=\frac{1}{r}$, $a_2(r)=r$ and $\phi(r)=\frac{c \Phi(\mathcal{T})}{r^2}$ in Theorem \ref{main them-2}, we find that {\bf Q2} is false!
So up to this point, we can view Theorems \ref{main them-1} and \ref{main them-2} as a supplementary example for the Gearhart-Pr\"uss type lemma \ref{GP lemma}.

\subsubsection{One-dimensional Schrödinger operators $\mathcal{L}:=-\Delta+V$ on $L^2(\Omega)$ and  $e^{-t\mathcal{L}}$.}

 Nonnegative Schrödinger operators \( \mathcal{L} \) on \( L^2(\mathbb{R}^N) \), and their associated heat semigroups \( e^{-t\mathcal{L}} \) have been extensively studied by many mathematicians since Simon's pioneering work \cite{Simon}. See also the monographs of Davies \cite{Davies}, Grigor’yan \cite{Grigoryan}, and Ouhabaz \cite{Ouhabaz}, which provide further insights into this topic.

In this paper, the potential \( V \) in the one-dimensional Schrödinger operator $\mathcal{L}$ is purely imaginary, so the Schrödinger operator no longer possesses the nonnegative property. However, as a substitute for nonnegativity, we have already mentioned in the subsection \ref{1.4} that $\mathcal{L}$ needs to be an accretive operator of the form \eqref{accretive}. It is clear that when \( V \) is purely imaginary, this Schrödinger operator is accretive. 

The behavior of the semigroups \( e^{-t\mathcal{L}} \)-whether they exhibit exponential or polynomial decay-depends not only on whether $\Omega$ is compact, but also on the properties of the potential function \( V \) itself. Therefore, estimating \( e^{-t\mathcal{L}} \) is a challenging yet very interesting mathematical problem. Here are a few examples.

\no{\bf Example 1}. $\mathcal{L}=-\partial_y^2$ with $\Omega=\R$ or $\Omega=[1,\infty)$. \( e^{-t\mathcal{L}} \) does not exhibit exponential decay. This result is classical, for specifics, see Example 4.1 and Example 4.2 in Section \ref{section 4}.

\no{\bf Example 2}. $\mathcal{L}=-\partial_y^2$ with $\Omega$ is bounded. \( e^{-t\mathcal{L}} \) exhibits exponential decay. This can be derived from the Poincaré inequality.

\no{\bf Example 3}. Couette flow, $\mathcal{L}=-\partial_y^2+iy$ with $\Omega=\R$, $\Omega=[0,1]$ or $\Omega=[1,\infty)$. Regardless of whether \( \Omega \) is bounded or unbounded, \( e^{-t\mathcal{L}} \) exhibits exponential decay \eqref{1.29a}. Therefore, the purely imaginary potential alters the properties of the Laplace operator.

\no{\bf Example 4}. TC flow, $\mathcal{L}=-\partial_y^2+i\f{1}{y^2}$ with $\Omega=[1,R]$. \( e^{-t\mathcal{L}} \) exhibits exponential decay \eqref{pointwise enhanced dissipation tc-bounded domain}. However, for the case where \( \Omega=[1,\infty) \) is an exterior domain, we can currently only obtain polynomial decay estimates as stated in Theorem \ref{main them-1}, and we are unable to answer whether \( e^{-t\mathcal{L}} \) exhibits exponential decay.

\subsubsection{Transition threshold.}From the perspective of linear systems, this paper aims to address whether the Gearhart-Pr\"uss type lemma remains applicable to the 2D TC flow in the exterior region, specifically whether {\bf Q1} and {\bf Q2} still hold, which is approached from a functional analysis standpoint. In fact, our final goal is to investigate the nonlinear stability of the 2D TC flow in the exterior region as below
\begin{align*}
\partial_t \wkt + (- \nu\Delta_{k,r}+ikB /r^2)\wkt=f_k,\quad(t,r)\in\mathbb{R}_{+}\times[1,\oo),
\end{align*}
especially the transition threshold problem for this flow, which will be appeared in our subsequent paper \cite{LZZ-2D-TC}. Therefore, addressing the questions of {\bf Q1} and {\bf Q2} also addresses whether the pointwise estimates of the nonlinear system can still exhibit exponential decay. The negative answer of {\bf Q2} indicates that the pointwise estimates for the nonlinear system of the 2D TC flow in the exterior region do not possess exponential decay but at most  polynomial decay.

\section{Space-time estimates for $\wkt$}
\subsection{Resolvent estimates}
In order to establish the decay properties of the linearized equation \eqref{scaling nonlinear}, we begin with studying the fundamental resolvent equation
\begin{align}\label{vorticity eqn}
& (\mathcal{T}-ikB\lambda) w=F=F_1+\pa_r F_2 \quad \textrm{with} \ w\in D,
\end{align}
where $\lambda\in\mathbb{R}$ and $k\in \Z\backslash\{0\}$. By a density argument in $\Ltr$, we may assume in addition in this section that $w,F_1, F_2\in \mathcal{C}^\oo_c([1,\oo))$,

\begin{lemma}\label{trivial w' lemma}For any $w\in D$, there holds
  \begin{align}
\label{trivial w'}&\Re\langle F,w\rangle_\Ltr=\nu\|w'\|_{\Ltr}^2+\nu(k^2-1/4)\|w/r\|_{\Ltr}^2.
\end{align}
\end{lemma}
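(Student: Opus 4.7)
The plan is to compute $\langle F, w\rangle_{L^2_r} = \langle (\mathcal{T}-ikB\lambda)w, w\rangle_{L^2_r}$ directly from the definition of $\mathcal{T}$ and extract the real part. Since the assumption in this section is $w \in \mathcal{C}^\infty_c([1,\infty))$ (the general case in $D$ follows by the density argument already noted just before the lemma), all boundary contributions at $r=1$ and $r=\infty$ will vanish, and the computation reduces to a clean integration by parts.

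First I would write out
\begin{equation*}
\langle F,w\rangle_{L^2_r} = -\nu\langle \partial_r^2 w,w\rangle_{L^2_r} + \nu(k^2-1/4)\langle w/r^2,w\rangle_{L^2_r} + ikB\langle w/r^2,w\rangle_{L^2_r} - ikB\lambda\langle w,w\rangle_{L^2_r}.
\end{equation*}
The middle two and the last inner products are of the form $\int_1^\infty |w|^2\,\mu(r)\,dr$ for real weights $\mu \in \{1/r^2,\,1\}$, hence are real and contribute $\nu(k^2-1/4)\|w/r\|_{L^2_r}^2$ to the real part (from the second term) and purely imaginary quantities $ikB\|w/r\|_{L^2_r}^2$ and $-ikB\lambda\|w\|_{L^2_r}^2$ (from the third and fourth terms) which drop out when $\Re$ is taken.

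Next I would handle the $-\nu\langle\partial_r^2 w,w\rangle_{L^2_r}$ term by integration by parts: since $w(1)=0$ and $w$ is compactly supported (or, for general $w\in D$, decays sufficiently at infinity via the $H^2\cap H^1_0$ regularity and $k^2-1/4\ge 0$ for $k\neq 0$), the boundary term vanishes, yielding
\begin{equation*}
-\nu\langle \partial_r^2 w,w\rangle_{L^2_r} = \nu\int_1^\infty |\partial_r w|^2\,dr = \nu\|w'\|_{L^2_r}^2,
\end{equation*}
which is real. Summing the real contributions gives exactly $\nu\|w'\|_{L^2_r}^2 + \nu(k^2-1/4)\|w/r\|_{L^2_r}^2$, as claimed.

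There is essentially no obstacle here; this is a standard accretivity-type identity for the Schrödinger operator $\mathcal{T}-ikB\lambda$, and the imaginary potential $ikB/r^2-ikB\lambda$ is arranged precisely so that it contributes nothing to the real part. The only technical point worth noting is the passage from $\mathcal{C}^\infty_c$ to the full domain $D$, which is already addressed by the density argument stated in the excerpt before the lemma, so no further care is needed.
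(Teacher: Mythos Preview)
Your proposal is correct and takes exactly the approach the paper uses: the paper's proof is the single line ``It follows by integration by parts,'' and you have spelled out precisely that computation. The only content is the vanishing of the boundary term in $-\nu\langle\partial_r^2 w,w\rangle_{L^2_r}=\nu\|w'\|_{L^2_r}^2$ and the observation that the potential $ikB/r^2-ikB\lambda$ contributes only to the imaginary part, both of which you handle correctly.
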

\begin{proof}It follows by integration by parts.
\end{proof}

To proceed, we fix some decreasing function $\rho\in \mathcal{C}^\oo(\R)$ satisfying
$$\rho(z)=1, \ \text{for} \ z\leq -1, \qquad  \rho(z)=-1, \ \text{for} \ z\geq 1.$$

\begin{lemma}\label{resolvent estimate-1}For any $k\in \mathbb{Z}\backslash\{0\}$, $|kB|\gg \nu >0$ and $\lambda\in\mathbb{R}$, there exists constant $C>0$  independent of $\nu,k,B,\lambda$, so that
\begin{align}\label{resolvent estimate-0-1}\nu\|w'\|_{\Ltr}^2+\mu_k\|w/r\|_{\Ltr}^2\leq C\bigl(\bigl|\langle F,w\rangle_\Ltr\bigr|+ \bigl|\langle F,\tilde{\rho} w\rangle_\Ltr\bigr|\mathds{1}_{(0,1)}(\lambda) \bigr),
\end{align}
where $\mu_k=\max\bigl\{\nu^{1/3} |kB|^{2/3},\nu k^2\bigr\}$, $\tilde{\rho} := \rho\Bigl(\frac{r-\lambda^{-\f12}}{\nu^\f13|kB|^{-\f13}\lambda
^{-\f12}}\Bigr)$ for $\lambda\in(0,1)$.

Moreover, there holds
\begin{equation}\label{2.4}
    \begin{split}
\nu^\f12 \|w'\|_\Ltr + \mu_k^\f12 \|w/r\|_\Ltr \lesssim \mu_k^{-\f12} \|rF_1\|_\Ltr +\nu^{-\f12}\|F_2\|_\Ltr.
    \end{split}
\end{equation}
\end{lemma}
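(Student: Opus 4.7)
The plan is to extract two kinds of information from the resolvent equation by pairing with the test functions $\bar w$ and $\tilde\rho\bar w$, treating separately the regimes in which the critical point of the transport term lies inside or outside the domain. Taking the real part of $\langle (\mathcal{T}-ikB\lambda)w,w\rangle_\Ltr = \langle F,w\rangle_\Ltr$ invokes Lemma~\ref{trivial w' lemma} directly and yields
\[
\nu\|w'\|_\Ltr^2+\nu(k^2-1/4)\|w/r\|_\Ltr^2=\Re\langle F,w\rangle_\Ltr.
\]
Since $|k|\geq 1$ implies $k^2-1/4\geq 3k^2/4$, this already gives \eqref{resolvent estimate-0-1} in the regime $\mu_k=\nu k^2$. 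Taking the imaginary part gives $kB\int(1/r^2-\lambda)|w|^2\,dr=\Im\langle F,w\rangle_\Ltr$, which in the case $\lambda\leq 0$ immediately produces $|kB|\|w/r\|_\Ltr^2\leq|\langle F,w\rangle_\Ltr|$, and $|kB|\geq \mu_k$ under the standing hypothesis $|kB|\gg\nu$.

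For $\lambda\geq 1$ the formal critical point $r_c=\lambda^{-1/2}$ lies at or below $r=1$, so the Navier-slip boundary condition $w(1)=0$ plays the role of the spatial cutoff. From $r^2\lambda-1\geq r-1$ for $r\geq 1$ one gets $\lambda-1/r^2\geq (r-1)/r^2$, hence $|kB|\int (r-1)|w/r|^2\,dr\leq|\langle F,w\rangle_\Ltr|$. I would then split the integral of $|w/r|^2$ at $r=1+a$ with $a=(\nu/|kB|)^{1/3}$: the outer piece yields $|kB|a\int_{1+a}^\infty|w/r|^2\,dr\leq|\langle F,w\rangle|$, while the inner piece uses the trace bound $|w(r)|^2\leq (r-1)\|w'\|_\Ltr^2$ together with the real-part estimate $\nu\|w'\|_\Ltr^2\leq|\langle F,w\rangle|$. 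The algebraic identity $\mu_k a^2/\nu=\mu_k/(|kB|a)=1$ shows that the optimal scale $a$ matches the hypoelliptic length and that both pieces recombine to $\mu_k\|w/r\|_\Ltr^2\lesssim|\langle F,w\rangle_\Ltr|$.

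The principal case is $\lambda\in(0,1)$, where $r_c\in(1,\infty)$ lies in the domain and the sign matching of $\tilde\rho$ becomes essential. Testing the equation against $\tilde\rho\bar w$ and taking the imaginary part, after integration by parts, produces
\[
kB\int\tilde\rho(1/r^2-\lambda)|w|^2\,dr+\nu\,\Im\int\tilde\rho' w'\bar w\,dr=\Im\langle F,\tilde\rho w\rangle_\Ltr.
\]
Using the factorization $|1/r^2-\lambda|=\lambda|r-r_c|(r+r_c)/r^2$ and the balance $|kB|\lambda\delta r_c=\mu_k$ built into $\delta=\nu^{1/3}|kB|^{-1/3}r_c$, one checks that $|kB||1/r^2-\lambda|\geq \mu_k/r^2$ outside the critical layer $|r-r_c|<\delta$; combined with $\tilde\rho\cdot\mathrm{sign}(1/r^2-\lambda)\equiv 1$ there, this yields the coercive lower bound $|kB|\tilde\rho(1/r^2-\lambda)\geq \mu_k/r^2-C\mu_k/r^2\,\mathds{1}_{|r-r_c|<\delta}$. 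The commutator term $\nu\tilde\rho' w'\bar w$ is dealt with via Young's inequality using $\|\tilde\rho'\|_\infty\lesssim\delta^{-1}$ and the exact match $\nu/\delta^2=\mu_k/r_c^2$, which turns it into a small multiple of $\nu\|w'\|_\Ltr^2$ (controlled by the real part) plus a $\mu_k$-weighted norm on the critical layer.

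The principal technical obstacle is the last step: closing the estimate by absorbing the residual critical-layer contribution $C\mu_k\int_{|r-r_c|<\delta}|w/r|^2$ into the left-hand side. I plan to handle this by tracking sharp constants in the pointwise bounds for $|kB|\tilde\rho(1/r^2-\lambda)$, possibly refining the shape of $\rho$ so that the residual coefficient is strictly less than $1$; if such direct absorption is too crude, one can instead invoke an Airy-type coercivity after rescaling $z=(r-r_c)/\delta$, under which $\mathcal{T}-ikB\lambda$ is asymptotically $(\mu_k/r_c^2)\bigl(-\partial_z^2+i\,\mathrm{sign}(kB)z\bigr)$. Finally, inequality \eqref{2.4} will follow from \eqref{resolvent estimate-0-1} by duality: integration by parts (using $w(1)=0$) gives $|\langle F,w\rangle_\Ltr|\leq\|rF_1\|_\Ltr\|w/r\|_\Ltr+\|F_2\|_\Ltr\|w'\|_\Ltr$, and the same bound for $\langle F,\tilde\rho w\rangle_\Ltr$ via $|\tilde\rho|\leq 1$; Young's inequality then distributes the right-hand side into $\epsilon\mu_k\|w/r\|_\Ltr^2$ and $\epsilon\nu\|w'\|_\Ltr^2$ terms, which are absorbed into \eqref{resolvent estimate-0-1}, producing the square-root form stated in \eqref{2.4}.
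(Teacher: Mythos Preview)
Your outline tracks the paper's proof closely (same three $\lambda$-regimes, same test functions $w$ and $\tilde\rho w$), and your treatment of $\lambda\le 0$ is identical. For $\lambda\ge 1$ your shortcut $|w(r)|^2\le (r-1)\|w'\|_\Ltr^2$ is a legitimate simplification; the paper instead uses Lemma~\ref{Appendix A1-1} there, but either closes. Two points need correction.

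\textbf{Critical-layer absorption for $\lambda\in(0,1)$.} Your option (a) cannot succeed: to reconstitute $\|w/r\|_\Ltr^2$ you must add $\int_{|r-r_c|<\delta}|w/r|^2$ back to the left, so the residual coefficient on the right is at least $1$ before you even count the (nonnegative) contributions from the transport defect and the commutator; no reshaping of $\rho$ changes this trade-off, since a sharper transition only enlarges the commutator. Option (b) would work but is unnecessary. The paper closes with a single elementary inequality that you are missing: the layer has \emph{relative} width $\tilde\delta=\nu^{1/3}|kB|^{-1/3}\ll 1$ by hypothesis, and Lemmas~\ref{Appendix A1-1}--\ref{Appendix A1-2} give
\[
\int_{r_c-\delta}^{r_c+\delta}|w/r|^2\,dr\le\Bigl(\int_{r_c-\delta}^{r_c+\delta}r^{-1}\,dr\Bigr)\|r^{-1/2}w\|_{L^\infty_r}^2\lesssim \tilde\delta\bigl(\|w/r\|_\Ltr\|w'\|_\Ltr+\|w/r\|_\Ltr^2\bigr).
\]
The small factor $\tilde\delta$ in front of $\|w/r\|_\Ltr^2$ allows absorption; the cross term, together with the commutator (which the paper keeps as the cross term $\nu\tilde\delta^{-1}\|w'\|_\Ltr\|w/r\|_\Ltr$ rather than splitting by Young), is then bounded via $\nu\|w'\|_\Ltr^2\le|\langle F,w\rangle_\Ltr|$ and a final Young inequality.

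\textbf{The passage to \eqref{2.4}.} Your claim that $\langle\partial_r F_2,\tilde\rho w\rangle_\Ltr$ obeys ``the same bound via $|\tilde\rho|\le 1$'' overlooks the $\tilde\rho'$ term arising from integration by parts. Since $|\tilde\rho'|\lesssim\delta^{-1}=\tilde\delta^{-1}r_c^{-1}$ on its support, one actually gets
\[
\bigl|\langle\partial_r F_2,\tilde\rho w\rangle_\Ltr\bigr|\lesssim \|F_2\|_\Ltr\bigl(\|w'\|_\Ltr+\nu^{-1/3}|kB|^{1/3}\|w/r\|_\Ltr\bigr),
\]
exactly as the paper records. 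The extra term is harmless (Young with $\mu_k\ge\kappa_k$ still produces the $\nu^{-1}\|F_2\|_\Ltr^2$ scaling), but it must be written down; otherwise the argument is incomplete.
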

\begin{proof}
The discussion is divided into the following three cases:
\begin{align*}
\lambda\in(-\infty,0],\quad \lambda\in[1,\infty),\quad \lambda\in(0,1).
\end{align*}

\no\textbf{(1). Case of $\lambda\in(-\infty,0]$.}

It is easy to observe that
\begin{align*}
\big|\Im\langle F,w\rangle_\Ltr\big|=|kB|\big|\langle(r^{-2}-\lambda)w,w\rangle_\Ltr\big|\geq |kB|\|w/r\|_{\Ltr}^2,
\end{align*}
which gives
\begin{align}
\label{lmabda trival-0}\bigl|\langle F,w\rangle_\Ltr\bigr|\geq|kB|\|w/r\|_{\Ltr}^2.
\end{align}

\no \textbf{(2). Case of $\lambda\in[1,\infty)$.}

Since $\lambda\in[1,\infty)$, there holds
\begin{align*}
\big|\Im\langle F,w\rangle_\Ltr\big|=|kB|\big|\langle(r^{-2}-\lambda)w,w\rangle_\Ltr\big|\geq |kB|\langle(1-r^{-2})w,w\rangle_\Ltr,
\end{align*}
which implies
\begin{align*}
&|kB|\int_1^{\infty}(1-r^{-2})|w|^2dr\leq\bigl|\langle F,w\rangle_\Ltr\bigr|.
\end{align*}
Let $0<\delta\ll1$ be a small constant to be determined later, we find
\begin{align*}
    &\int_1^{\infty}(1-r^{-2})|w|^2dr\geq \int_{1+\delta}^{\infty}(1-r^{-2})|w|^2dr\\
    &=\int_{1+\delta}^2(r-1)(r+1)r^{-2}|w|^2dr+\int_2^{\infty}(1-r^{-2})|w|^2dr\\
&\geq\delta\int_{1+\delta}^2|w/r|^2dr+1/2\int_2^{\infty}|w/r|^2dr\geq \delta\int_{1+\delta}^{\infty}|w/r|^2dr,
\end{align*}
which gives
\begin{align*}
&\|w/r\|_{L^2(1+\delta, \infty)}^2
\leq|kB|^{-1}{\delta}^{-1}\bigl|\langle F,w\rangle_\Ltr\bigr|.
\end{align*}
Thus, we obtain the following estimate for $\|w/r\|_{\Ltr}$:
\begin{align*}
\|w/r\|_{\Ltr}^2=&\|w/r\|_{L^2(1+\delta, \infty)}^2+\|w/r\|_{L^2(1,1+\delta)}^2\\
\leq&|kB|^{-1}{\delta}^{-1}\bigl|\langle F,w\rangle_\Ltr\bigr|+\int_1^{1+\delta}r^{-1}dr\|w/r^{1/2}\|_{\Lor}^2 \\
\leq&|kB|^{-1}{\delta}^{-1}\bigl|\langle F,w\rangle_\Ltr\bigr|+\delta\|w/r^{1/2}\|_{\Lor}^2,
\end{align*}
from which,  Lemma \ref{trivial w' lemma} and Lemma \ref{Appendix A1-1}, we deduce that
 \begin{align*}
\|w/r\|_{\Ltr}^2
\leq&|kB|^{-1}{\delta}^{-1}\bigl|\langle F,w\rangle_\Ltr\bigr|+\delta\|w/r^{1/2}\|_{\Lor}^2\\
\lesssim&|kB|^{-1}{\delta}^{-1}\bigl|\langle F,w\rangle_\Ltr\bigr|+{\delta} \|w/r\|_{\Ltr}\|w'\|_{\Ltr} + \delta \|w/r\|_{\Ltr}^2\\
\lesssim&|kB|^{-1}{\delta}^{-1}\bigl|\langle F,w\rangle_\Ltr\bigr|+\delta\nu^{-1/2}\bigl|\langle F,w\rangle_\Ltr\bigr|^{1/2}\|w/r\|_{\Ltr} + \delta \|w/r\|_{\Ltr}^2.
\end{align*}
Hence by taking $\delta=\nu^{1/3}|kB|^{-1/3}$,
we achieve
\begin{align}\label{lmabda-0-1}
\bigl|\langle F,w\rangle_\Ltr\bigr|\gtrsim\min\{|kB|\delta,{\delta}^{-2}\nu\}\|w/r\|_{\Ltr}^2\gtrsim \nu^{1/3}|kB|^{2/3}\|w/r\|_{\Ltr}^2.
\end{align}

\no \textbf{(3). Case of $\lambda\in(0,1)$.}

Let $r_0=\lambda^{-\f12}$ and  $\delta =\tilde{\delta}r_0$, where $0<\tilde{\delta}\ll 1$ is a small constant to be determined later. In order to control $\|w/r\|_{\Ltr((1, r_0-\delta)\cup(r_0+\delta,\infty))}$, we denote $\rho_\delta:= \rho(\frac{r-r_0}{\delta})$, $\rho_\delta':= \rho'(\frac{r-r_0}{\delta})$  and take $\Ltr$ inner product of $F$ with $\rho_\delta w$, and then we get, by using integration by parts, that
\begin{align*}
\langle F,\rho_\delta w \rangle_\Ltr
=&-\nu\int_1^{\oo} \rho_\delta w''\overline{w}dr+\nu (k^2-1/4)\int_1^{\oo} \rho_\delta |w/r|^2dr +ikB\int_1^{\oo} \rho_\delta  (r^{-2}-\lambda)|w|^2dr\\
=&\nu\int_1^{\oo}\rho_\delta |w'|^2dr + \nu\int_1^{\oo} \delta^{-1}\rho'_\delta w' \overline{w} dr\\
&+\nu (k^2-1/4)\int_1^{\oo} \rho_\delta |w/r|^2dr +ikB\int_1^{\oo} \rho_\delta  (r^{-2}-\lambda)|w|^2dr.
\end{align*}
By taking the imaginary part of above equality and noticing the sign of $\rho_\delta$, one has
\begin{align*}
&|kB|\Bigl(\int_1^{r_0-\delta} (r^{-2}-\lambda)|w|^2dr+\int_{r_0+\delta}^\infty (\lambda-r^{-2})|w|^2dr\Bigr)\\
&\leq\big|\langle F,\rho_\delta w\rangle_\Ltr\big|+ \nu \delta^{-1}\Bigl|\int_1^{\oo} \rho'_\delta w' \overline{w} dr\Bigr| + |kB| \int_{r_0-\delta}^{r_0+\delta}|r^{-2}-\lambda| |w|^2 dr .
\end{align*}
In view of the definitions of $r_0, \delta$, we have
\begin{align*}
    &\int_1^{r_{-}} (r^{-2}-\lambda)|w|^2dr\geq \int_1^{r_0-\delta}(r^{-2}-r_0^{-2})|w|^2dr\\
   & =\int_1^{r_0-\delta}  (r_0-r)(r_0+r)r^{-2}r_0^{-2}|w|^2dr\geq\delta/ r_0\int_1^{r_0-\delta}| w/r|^2dr= \tilde{\delta}\int_1^{r_0-\delta}| w/r|^2dr,
\end{align*}
and
\begin{align*}
    &\int_{r_{+}}^\infty  (\lambda -r^{-2})|w|^2dr\geq \int_{r_0+\delta}^\infty  (r_0^{-2}-r^{-2})|w|^2dr\\
    &=\int_{r_0+\delta}^\infty  (r-r_0)(r_0+r)r^{-2}r_0^{-2}|w|^2dr\geq\delta/ r_0\int_{r_0+\delta}^\infty| w/r|^2dr=\tilde{\delta}\int_{r_0+\delta}^\infty| w/r|^2dr.
\end{align*}
And thanks to the support of $\rho_\delta'$, we have
\begin{align*}
    &\nu\delta^{-1} \Bigl|\int_1^{\oo} \rho'_\delta w' \overline{w} dr \Bigr| \lesssim    \nu\delta^{-1} r_0 \Bigl|\int_1^{\oo} \rho_\delta'  w' \overline{w}/r dr \Bigr|\lesssim  \nu \tilde{\delta}^{-1} \| w'\|_\Ltr \|w/r\|_\Ltr,
\end{align*}
and
\begin{align*}
    |kB| \int_{r_0-\delta}^{r_0+\delta}|r^{-2}-\lambda| |w|^2 dr &=|kB| \int_{r_0-\delta}^{r_0+\delta} \frac{(r_0+r)|r_0-r|}{r^2 r_0^2}| |w|^2 dr \\
    &\lesssim |kB|\tilde{\delta} \|w/r\|_{L^2(r_0-\delta,r_0+\delta)}^2.
\end{align*}

By summarizing the above inequalities, we can bound $\| w/r\|_{L^2((1, r_0-\delta)\cup(r_0+\delta,\infty))}$ as follows:
\begin{equation*}\begin{split}
\label{w/r 1 to r_+ r_+ to R}
\| w/r\|_{L^2((1, r_0-\delta)\cup(r_0+\delta,\infty))}^2\lesssim &|kB|^{-1}\tilde{\delta}^{-1}\Bigl(\big|\langle F,\rho_\delta w\rangle_\Ltr\big|+\nu\tilde{\delta}^{-1}\| w'\|_\Ltr \|w/r\|_\Ltr \Bigr)\\
&+ \|w/r\|_{L^2(r_0-\delta,r_0+\delta)}^2,
\end{split}\end{equation*}
from which and Lemma \ref{Appendix A1-2}, we infer
\begin{align*}
\|w/r\|_{\Ltr}^2=&\|w/r\|_{L^2((1, r_0-\delta)\cup(r_0+\delta,\infty))}^2+\|w/r\|_{L^2(r_0-\delta,r_0+\delta)}^2\\
\lesssim&|kB|^{-1}\tilde{\delta}^{-1}\Big(\big|\langle F,\rho_\delta w\rangle_\Ltr\big|+\nu\tilde{\delta}^{-1}\|w'\|_{\Ltr}\|w/r\|_{\Ltr}\Big)+\int_{r_0-\delta}^{r_0+\delta}r^{-1}dr\|r^{-\f12}w\|_{\Lor}^2 \\
\lesssim&|kB|^{-1}\tilde{\delta}^{-1}\Big(\big|\langle F,\rho_\delta w\rangle_\Ltr\big|+\nu\tilde{\delta}^{-1}\|w'\|_{\Ltr}\|w/r\|_{\Ltr}\Big)+\tilde{\delta}\|r^{-\f12}w\|_{\Lor}^2.
\end{align*}

 \noindent Then thanks to  Lemma \ref{trivial w' lemma} and Lemma \ref{Appendix A1-1}, we deduce that
\begin{align*}
    \|w/r\|_{\Ltr}^2
&\lesssim|kB|^{-1}\tilde{\delta}^{-1}\Big(\big|\langle F,\rho_\delta w\rangle_\Ltr\big|+\nu\tilde{\delta}^{-1}\|w'\|_{\Ltr}\|w/r\|_{\Ltr}\Big)+\tilde{\delta}\|r^{-\f12}w\|_{\Lor}^2\\
&\lesssim|kB|^{-1}\tilde{\delta}^{-1}\Big(\big|\langle F,\rho_\delta w\rangle_\Ltr\big|+\nu\tilde{\delta}^{-1}\|w'\|_{\Ltr}\|w/r\|_{\Ltr}\Big)\\
&\quad + \tilde{\delta}\|w/r\|_\Ltr \bigl(\|w'\|_\Ltr+  \|w/r\|_\Ltr \bigr)\\
&\lesssim |kB|^{-1}\tilde{\delta}^{-1}\big|\langle F,\rho_\delta w\rangle_\Ltr\big| + \tilde{\delta} \|w/r\|_\Ltr^2 + \bigl(\tilde{\delta}+\nu|kB|^{-1}\tilde{\delta}^{-2}\bigr)\|w/r\|_\Ltr \bigl\|w'\|_\Ltr\\
&\lesssim |kB|^{-1}\tilde{\delta}^{-1}\big|\langle F,\rho_\delta w\rangle_\Ltr\big|+\bigl(\tilde{\delta}+\nu|kB|^{-1}\tilde{\delta}^{-2}\bigr)\nu^{-\f12} \|w/r\|_{\Ltr}  |\braket{F,w}_\Ltr|^\f12 + \tilde{\delta} \|w/r\|_\Ltr^2  .
\end{align*}

Finally by choosing $\tilde{\delta}=\nu^{1/3}|kB|^{-1/3}\ll1$ to optimize the above inequality and  absorbing the term $\tilde{\delta} \|w/r\|_\Ltr^2 $, we achieve
\begin{align*}
\|w/r\|_{\Ltr}^2
&\lesssim |kB|^{-1}\tilde{\delta}^{-1}\big|\langle F,\rho_\delta w\rangle_\Ltr\big|+\tilde{\delta}\nu^{-1/2}\bigl|\langle F,w\rangle_\Ltr\bigr|^{1/2}\| w/r\|_{\Ltr}.
\end{align*}
Applying Young inequality and using $\tilde{\delta}=\nu^{1/3}|kB|^{-1/3}\ll1$ gives
\begin{align}
\label{lmabda-1}\bigl|\langle F,w\rangle_\Ltr\bigr| + \big|\langle F,\rho_\delta w\rangle_\Ltr\big|\gtrsim \nu^{1/3}|kB|^{2/3}\|w/r\|_{\Ltr}^2.
\end{align}

Therefore, by summarizing \eqref{lmabda trival-0}, \eqref{lmabda-0-1} and \eqref{lmabda-1}, we conclude that
\begin{align}
\label{k.6-}\nu^{1/3}|kB|^{2/3}\|w/r\|_{\Ltr}^2\lesssim\bigl|\langle F,w\rangle_\Ltr\bigr| +\big|\langle F,\rho_\delta w\rangle_\Ltr\big|\mathds{1}_{[0,1]}(\lambda) \quad \textrm{for all} \ \lambda\in \mathbb{R},
\end{align}
which along with Lemma \ref{trivial w' lemma} completes the proof of \eqref{resolvent estimate-0-1}.

To check \eqref{2.4}, one only needs to notice that:
\begin{equation*}
    \big|\langle F_1,w\rangle_\Ltr \big| +\big|\langle F_1,\rho_\delta w\rangle_\Ltr\big|\mathds{1}_{[0,1]}(\lambda) \lesssim \|rF_1\|_\Ltr \|w/r\|_\Ltr,
\end{equation*}
and since $w(1)=0$ and $\frac{r_0}{\delta}=\tilde{\delta}^{-1},$ we have
\begin{align*}
    \big|\langle \pa_rF_2,w\rangle_\Ltr\big| +\big|\langle \pa_r F_2,\rho_\delta w\rangle_\Ltr\big|\mathds{1}_{[0,1]}(\lambda) &\lesssim \|F_2\|_\Ltr \bigl(\|\pa_r w\|_\Ltr + \frac{r_0}{\delta} \|w/r\|_\Ltr\bigr)\\
    &\lesssim \|F_2\|_\Ltr \bigl(\|\pa_r w\|_\Ltr +{ \nu^{-\f13}|kB|^\f13 }\|w/r\|_\Ltr\bigr).
\end{align*}
By substituting the above inequalities into \eqref{resolvent estimate-0-1} and using Young's inequality, we obtain
\begin{align*}\nu\|w'\|_{\Ltr}^2+\mu_k\|w/r\|_{\Ltr}^2\leq C\bigl(\mu_k^{-1}\|rF_1\|_\Ltr^2 +\nu^{-1}\|F_2\|_\Ltr^2\bigr)
+\frac12\bigl(\nu\|w'\|_{\Ltr}^2+\nu^{\f13}|kB|^{\f23 }\|w/r\|_\Ltr^2\bigr),
\end{align*}
which leads to \eqref{2.4}. This completes the proof of Lemma \ref{resolvent estimate-1}.
 \end{proof}

Now we prove that the pseudospectral bound $\nu^{1/3} |kB|^{2/3}$ in Lemma \ref{resolvent estimate-1} is sharp for low frequency $k$:

\begin{lemma}\label{resolvent estimate-2}
Let  $|B|>\nu$. There exist $\lambda_0\in \R$, $w_0\in D$ and $C,$ which are  independent of $\nu$ and $ B$, so that
\begin{align*}
\|r(\mathcal{T}-iB\lambda_0) w_0\|_{\Ltr}\leq C\nu^{1/3} |B|^{2/3}\|w_0/r\|_{\Ltr}.
\end{align*}
\end{lemma}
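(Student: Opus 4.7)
The plan is to construct an explicit quasi-mode localized at a critical point of the imaginary potential $r^{-2}$. Since $r\mapsto r^{-2}$ is a decreasing bijection from $[1,\infty)$ onto $(0,1]$, for any $\lambda_0\in(0,1)$ the equation $r^{-2}=\lambda_0$ has a unique solution $r_0=\lambda_0^{-1/2}$. I will fix $\lambda_0=1/4$, so $r_0=2$; this keeps the critical point uniformly bounded away from the boundary $r=1$ and from infinity, and the choice is independent of $\nu$ and $B$. Throughout I read $\mathcal{T}$ as $\mathcal{T}_1$ (the lemma covers low frequency $k=1$, consistent with the appearance of $iB\lambda_0$ rather than $ikB\lambda_0$ in the statement).

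Next, fix once and for all a nonzero cut-off $\chi\in C_c^\infty((-1,1))$ and set $w_0(r):=\chi\bigl((r-r_0)/\delta\bigr)$ with $\delta:=(\nu/|B|)^{1/3}$. The hypothesis $|B|>\nu$ guarantees $\delta<1$, so $\mathrm{supp}\,w_0\subset(1,3)$ and hence $w_0\in D$. A direct change of variables yields
\begin{equation*}
\|w_0/r\|_{\Ltr}^2\;\geq\;r_0^{-2}\int_{r_0-\delta}^{r_0+\delta}\bigl|\chi((r-r_0)/\delta)\bigr|^2\,dr\;=\;c_0\,\delta
\end{equation*}
for a constant $c_0>0$ depending only on $\chi$.

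Then I expand $(\mathcal{T}-iB\lambda_0)w_0=-\nu w_0''+\tfrac{3\nu}{4r^2}w_0+iB(r^{-2}-\lambda_0)w_0$ and estimate each term in $\|r\,\cdot\|_{\Ltr}$. Using $r\sim r_0$ on the support of $w_0$: the viscous term gives $\|r\nu w_0''\|_{\Ltr}\lesssim\nu\delta^{-3/2}$; the lower-order viscous piece $\tfrac{3\nu}{4r^2}w_0$ contributes $\lesssim\nu\delta^{1/2}$; and a Taylor expansion $|r^{-2}-\lambda_0|=|r^{-2}-r_0^{-2}|\leq C\delta$ on the support gives $\|irB(r^{-2}-\lambda_0)w_0\|_{\Ltr}\lesssim|B|\delta^{3/2}$. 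The choice $\delta=(\nu/|B|)^{1/3}$ is made precisely to balance the viscous and advective contributions $\nu\delta^{-3/2}=|B|\delta^{3/2}=\nu^{1/2}|B|^{1/2}$, while $\nu\delta^{1/2}\leq\nu^{1/2}|B|^{1/2}$ follows from $|B|>\nu$. Combining with $\|w_0/r\|_{\Ltr}\gtrsim\delta^{1/2}=(\nu/|B|)^{1/6}$ produces the ratio
\begin{equation*}
\frac{\|r(\mathcal{T}-iB\lambda_0)w_0\|_{\Ltr}}{\|w_0/r\|_{\Ltr}}\;\lesssim\;\frac{\nu^{1/2}|B|^{1/2}}{(\nu/|B|)^{1/6}}\;=\;\nu^{1/3}|B|^{2/3},
\end{equation*}
as required.

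There is no substantial obstacle beyond bookkeeping constants. The only conceptual point is the scale choice: $\delta=(\nu/|B|)^{1/3}$ is exactly the Airy-type boundary-layer width dictated by balancing dissipation $\nu\partial_r^2$ against the linearization of the imaginary potential at its critical point $r_0$, which is the same scale that already appeared in the upper bound (Lemma~\ref{resolvent estimate-1}) via the cut-off $\tilde\rho$. This matching confirms that the pseudospectral bound $\nu^{1/3}|B|^{2/3}$ is sharp.
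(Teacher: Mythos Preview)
Your proof is correct and uses the same quasi-mode strategy as the paper, but with a different choice of parameters that is worth recording. The paper places the bump at the moving point $r_0=(|B|/\nu)^{1/6}\to\infty$ with width $r_0^{-1}=(\nu/|B|)^{1/6}$, and uses the explicit $C^2$ polynomial $(r-r_0)^3(r_0+r_0^{-1}-r)^3$; you instead keep $r_0=2$ fixed and shrink the width to $\delta=(\nu/|B|)^{1/3}$ with a generic smooth cutoff. Both work because, once the bump width is set to the Airy scale $h=r_0(\nu/|B|)^{1/3}$, the ratio $\|r(\mathcal{T}-iB\lambda_0)w_0\|_{\Ltr}/\|w_0/r\|_{\Ltr}$ is independent of $r_0$: the viscous contribution scales like $\nu r_0 h^{-3/2}$, the advective one like $|B|r_0^{-2}h^{3/2}$, and dividing by $\|w_0/r\|_{\Ltr}\sim r_0^{-1}h^{1/2}$ gives $\nu^{1/3}|B|^{2/3}$ either way. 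Your version is closer to a textbook semiclassical quasi-mode and avoids tracking powers of $r_0$; the paper's version has the mild advantage that the explicit polynomial makes all constants computable without invoking an abstract $\chi$. One cosmetic slip: your displayed lower bound $\|w_0/r\|_{\Ltr}^2\geq r_0^{-2}\int|\chi|^2$ should use $(r_0+\delta)^{-2}$ rather than $r_0^{-2}$, but since $r_0+\delta\leq3$ this only changes $c_0$.
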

\begin{proof}
Let $r_0=\left(\frac{|B|}{\nu}\right)^\f16>1$ and $\lambda_0=r_0^{-2}$.  We construct
\begin{align*}
w_0(r)=\left\{
\begin{aligned}
&(r-r_0)^3(r_0+r_0^{-1}-r)^3,\quad r_0\leq r\leq r_0+r_0^{-1},\\
&0,\quad \textrm{else}.
\end{aligned}
\right.
\end{align*}
It's easy to check that $w_0\in D$. Due  to $r_0>1$, we have $r_0+r_0^{-1}\leq 2r_0$, from which, we get,
by  direct calculations, that
\begin{align*}
  &  \bigl\|\frac{w_0}{r}\bigr\|_\Ltr = \Bigl(\int_{r_0}^{r_0+r_0^{-1}}r^{-2}   (r-r_0)^6(r_0+r_0^{-1}-r)^6 dr\Bigr)^\f12  \approx r_0^{-\frac{15}{2}},\\
&\nu\| r(-\partial_r^2+3/4r^{-2})w_0\|_{\Ltr}\leq \nu\bigl(\|r\partial_r^2w_0\|_{\Ltr}+\bigl\| \frac{w_0}{r}\bigr\|_{\Ltr}\bigr) \lesssim \nu(r_0^4+1)r_0^{-\frac{15}{2}} \lesssim \nu r_0^4\bigl\|\frac{w_0}{r}\bigr\|_\Ltr,\end{align*}
and
\begin{align*}
|B|\|r(\lambda_0-r^{-2})w_0\|_{\Ltr}=&|B|\|r(r_0^{-2}-r^{-2})w_0\|_{\Ltr}\\
 \leq &|B|\bigl\|\frac{(r-r_0)(r+r_0)}{r_0^2}\frac{w_0}{r}\bigr\|_{\Ltr} \lesssim  |B|r_0^{-2}\bigl\|\frac{w_0}{r}\bigr\|_{\Ltr}.
\end{align*}
By summarizing the above inequalities, we arrive at
\begin{align*}
\|r(\mathcal{T}-iB\lambda_0) w_0\|_{\Ltr} \lesssim (\nu r_0^4+|B|r_0^{-2})\|w_0/r\|_{\Ltr} \lesssim \nu^{1/3} |B|^{2/3}\|w_0/r\|_{\Ltr}.
\end{align*}
This completes the proof of Lemma \ref{resolvent estimate-2}.
\end{proof}

\subsection{Space-time estimates for $\wkt$} We recall from \eqref{homogenous eq tc} and \eqref{inhomogenous eq tc} that $\wkt=\wkht+\wkit$, where
 \begin{align*}
     &\partial_t \wkht +\mathcal{T}\wkht = 0,\quad \wkht|_{t=0} =w_k(0),\quad \wkht|_{r=1,\infty}=0,\\
      &\partial_t \wkit +\mathcal{T}\wkit = f_k, \quad \wkit|_{t=0}=0, \quad  \wkit|_{r=1,\infty}=0.
\end{align*}
In this subsection, our main aim is to prove the following estimate:
\begin{proposition}\label{space-time estimates}
    For any $k\in \mathbb{Z}\backslash \{0 \}$, $|kB|\gg \nu >0$ and $T>0$, there exists constant $C>0$ independent of $\nu,k,B,T$, so that
    \begin{equation*}
   \|\wkt\|_{\LoT(\Ltr)} + \nu^{1/2} \|\partial_r \wkt\|_{\LtT(\Ltr)} + \mu_k^{1/2}\|\wkt/r\|_{\LtT(\Ltr)} \leq C\bigl( \|w_k(0)\|_{L^2} + \mu_k^{-1/2} \|rf_k\|_{\LtT(\Ltr)}\bigr),
    \end{equation*}
    where $\mu_k=\max\{\nu k^2, \nu^{1/3} |kB|^{2/3} \}$.
\end{proposition}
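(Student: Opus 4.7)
The plan is to combine a standard $\Ltr$ energy identity with a Fourier-in-time resolvent argument anchored on Lemma~\ref{resolvent estimate-1}, decomposing $\wkt=\wkht+\wkit$ as in \eqref{homogenous eq tc}--\eqref{inhomogenous eq tc}. The $\LoT(\Ltr)$ bound, the $\nu^{1/2}\|\partial_r\wkt\|_{\LtT(\Ltr)}$ bound, and an initial $(\nu k^2)^{1/2}\|\wkt/r\|_{\LtT(\Ltr)}$ bound follow in one stroke from the energy identity: pair the equation with $\overline{\wkt}$ in $\Ltr$, take the real part to kill the skew-adjoint multiplication $ikB/r^2$, integrate on $(0,T')$, take supremum over $T'\in(0,T]$, and apply Cauchy-Schwarz and Young to the inhomogeneity via $|\langle f_k,\wkt\rangle_\Ltr|=|\langle rf_k,\wkt/r\rangle_\Ltr|$. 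This already closes the proposition in the heat-dominated regime $\mu_k=\nu k^2$.

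In the enhanced-dissipation regime $\mu_k=\kappa_k=\nu^{1/3}|kB|^{2/3}$, the $\|\wkt/r\|_{\LtT(\Ltr)}$ bound must be upgraded. For the inhomogeneous piece $\wkit$ (zero initial datum), I would extend $\wkit$ and $f_k$ by zero to $t\in\R$ (the extension is continuous at $t=0$ since $\wkit(0)=0$; beyond $t=T$ one may either continue the PDE with $f_k\equiv 0$ or apply a smooth time cut-off whose commutator is reabsorbed). Taking the Fourier transform in time converts the equation into the family of resolvent equations $(\mathcal{T}+i\tau)\hat{\wkit}(\tau)=\hat{f}_k(\tau)$ for $\tau\in\R$. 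Writing $i\tau=-ikB\lambda$ with $\lambda=-\tau/(kB)$, estimate~\eqref{2.4} of Lemma~\ref{resolvent estimate-1} (with $F_1=\hat{f}_k(\tau)$, $F_2=0$) gives
\begin{equation*}
\nu^{1/2}\|\partial_r\hat{\wkit}(\tau)\|_\Ltr+\mu_k^{1/2}\|\hat{\wkit}(\tau)/r\|_\Ltr\leq C\mu_k^{-1/2}\|r\hat{f}_k(\tau)\|_\Ltr.
\end{equation*}
Squaring, integrating in $\tau$, and applying Plancherel yields $\mu_k^{1/2}\|\wkit/r\|_{\LtT(\Ltr)}\leq C\mu_k^{-1/2}\|rf_k\|_{\LtT(\Ltr)}$, and the same argument controls the $\nu^{1/2}\|\partial_r\wkit\|_{\LtT(\Ltr)}$ contribution.

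The main difficulty is handling $\wkht$ in the enhanced-dissipation regime. A direct Fourier extension of $\wkht$ by zero to $t<0$ produces a distributional source $w_k(0)\delta(t)$ whose Fourier transform is the constant $w_k(0)$, so the $\|rF\|_\Ltr$-form of estimate~\eqref{2.4} demands control of $\|rw_k(0)\|_\Ltr$, which in the exterior region $(1,\infty)$ can be infinite and for which Plancherel in $\tau$ diverges. I would bypass this by applying the Fourier-plus-resolvent procedure to $v:=\chi\wkht$ for a smooth time cut-off $\chi$ with $\chi(0)=0$ and $\chi\equiv 1$ outside a short initial layer $[0,c\kappa_k^{-1}]$; then $v$ solves $\partial_tv+\mathcal{T}v=\chi'\wkht$ with $v(0)=0$, while the complementary piece $(1-\chi)\wkht$, supported on the short layer, is controlled directly by $\kappa_k^{1/2}\|(1-\chi)\wkht/r\|_{\LtT(\Ltr)}\lesssim\|\wkht\|_{\LoT(\Ltr)}\lesssim\|w_k(0)\|_\Ltr$ using the energy bound together with $1/r\leq 1$. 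The crux is to control the source $\chi'\wkht$ without invoking an unavailable $r$-weighted norm of $\wkht$; this is where the bilinear form~\eqref{resolvent estimate-0-1} of the resolvent estimate, combined with a duality or test-function argument, would be needed in place of the $\|rF\|_\Ltr$-form of \eqref{2.4}, balancing the cut-off scale $c\kappa_k^{-1}$ against the resolvent scaling to close the estimate.
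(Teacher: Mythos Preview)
Your treatment of the inhomogeneous piece $\wkit$ is correct and matches the paper exactly: Fourier-in-time transforms \eqref{inhomogenous eq tc} into the resolvent family, and \eqref{2.4} plus Plancherel yields $\mu_k^{1/2}\|\wkit/r\|_{\LtT(\Ltr)}\lesssim\mu_k^{-1/2}\|rf_k\|_{\LtT(\Ltr)}$. Likewise the energy identity gives the $\LoT(\Ltr)$ bound and the $\nu^{1/2}\|\partial_r\cdot\|$ bound exactly as you say.

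The gap is in your handling of $\wkht$ in the enhanced-dissipation regime. The time cut-off trick does not close: after setting $v=\chi\wkht$ you must control the source $\chi'\wkht$, and the only resolvent output available in the exterior domain is $\|w/r\|_\Ltr$, not $\|w\|_\Ltr$. Whether you use \eqref{2.4} or the bilinear form \eqref{resolvent estimate-0-1}, pairing $\widehat{\chi'\wkht}(\tau)$ against $\hat v(\tau)$ forces either the weight $r$ onto $\chi'\wkht$ (so you need $\|r\wkht\|_\Ltr$, which is not controlled by $\|w_k(0)\|_\Ltr$ on $(1,\oo)$) or the unweighted $\|\hat v(\tau)\|_\Ltr$ on the other factor (which the resolvent does not furnish; note there is no Poincar\'e inequality on $(1,\oo)$). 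The $\tilde\rho$ term in \eqref{resolvent estimate-0-1} is also $\lambda$-dependent and obstructs Plancherel. No choice of cut-off scale fixes this; the obstruction is the mismatch of weights, not of scales.

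The paper avoids the Fourier-in-time route for $\wkht$ altogether. It conjugates the transport away by writing $\wkht=e^{-ikBt/r^2}w^{(1)}+w^{(2)}$, so that $w^{(1)}$ solves a \emph{real}, self-adjoint heat equation with the extra time-growing damping $\nu|2kBt/r^3|^2$ (equation \eqref{w-1}). This damping is what replaces the resolvent mechanism: a dyadic decomposition in $r$ shows that on each annulus $[2^{j+1}/3,2^{j+1}]$ the damping becomes effective after time $\sim 2^{2j}/\kappa_k$, and a direct energy argument (Proposition~\ref{estimate of homogeneous eqs-main part}) yields $\kappa_k^{1/2}\|w^{(1)}/r\|_{\LtT(\Ltr)}\lesssim\|w_k(0)\|_\Ltr$. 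The remainder $w^{(2)}$ has zero initial data and a source built from $w^{(1)}$, so it is handled by your (correct) inhomogeneous argument. This conjugation-plus-dyadic-energy step is the missing idea; the resolvent estimate alone does not produce the homogeneous bound in the exterior region.
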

\begin{proof}This is a direct consequence of \eqref{L2 space time estimates} and Proposition \ref{estimate of homogeneous eqs}.
\end{proof}

\subsubsection{Space-time estimates of the inhomogeneous part $\wkit$}\label{Space-time estimates of the inhomogeneous equation inhomogenous eq}


In this section, we investigate the space-time estimates of the following equation:
\begin{align}\label{2.10}
    \partial_t w +\mathcal{T}w = f=f_1+\pa_r f_2, \quad w|_{t=0}=0, \quad  w|_{r=1,\infty}=0.
\end{align}
\begin{lemma}\label{abstract ineq inhomogenous equations-low mode}
Let $k\in \mathbb{Z}\backslash \{0 \}$,  $|kB|\gg \nu >0$, $T>0$ and let   $(w,f)$ solve \eqref{2.10}.
There exists constant $C>0$ independent of $\nu,k,B,T,w,f$, so that
\begin{align}\label{L2 space time estimates}
\|w \|_{\LoT(\Ltr)}+\nu^{1/2}\|\partial_rw\|_{\LtT(\Ltr)}&+\mu_k^{1/2}\|w/r\|_{\LtT(\Ltr)}\\
    &\leq  C \bigl(\mu_k^{-\f12} \|rf_1\|_{\LtT(\Ltr)} +\nu^{-\f12} \|f_2\|_{\LtT(\Ltr)}\bigr).\nonumber
\end{align}
\end{lemma}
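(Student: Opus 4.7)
The plan is to establish the three pieces of the estimate by combining a direct time-energy identity with a Fourier-in-$t$ argument that feeds off the resolvent estimate \eqref{2.4} from Lemma \ref{resolvent estimate-1}. Concretely, the Fourier step will deliver the sharp $\LtT(\Ltr)$ bounds with weights $\nu^{1/2}$ and $\mu_k^{1/2}$, and the energy identity will then convert these into the $\LoT(\Ltr)$ bound.

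\textbf{Energy step.} Testing \eqref{2.10} against $\overline{w}$ in $\Ltr$ and extracting the real part gives, after using $w|_{r=1}=0$ and integrating by parts,
\begin{align*}
\tfrac12 \partial_t\|w\|_\Ltr^2 + \nu\|\partial_r w\|_\Ltr^2 + \nu(k^2-\tfrac14)\|w/r\|_\Ltr^2 = \Re\langle f_1,w\rangle_\Ltr - \Re\langle f_2,\partial_r w\rangle_\Ltr.
\end{align*}
The right-hand side is bounded via Cauchy--Schwarz by $\|rf_1\|_\Ltr\|w/r\|_\Ltr + \|f_2\|_\Ltr\|\partial_r w\|_\Ltr$. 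Integrating in $t$ and taking the supremum over $t\in[0,T]$ yields
\begin{align*}
\|w\|_{\LoT(\Ltr)}^2 \lesssim \|rf_1\|_{\LtT(\Ltr)}\|w/r\|_{\LtT(\Ltr)} + \|f_2\|_{\LtT(\Ltr)}\|\partial_r w\|_{\LtT(\Ltr)},
\end{align*}
which will convert into the desired $\LoT(\Ltr)$ bound once the two $\LtT(\Ltr)$-factors are controlled by the Fourier step.

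\textbf{Fourier step.} Extend $f_1, f_2$ by zero outside $[0,T]$ and call the extensions $\tilde f_1, \tilde f_2$. Let $\tilde w$ denote the solution of $\partial_t\tilde w + \mathcal{T}\tilde w = \tilde f_1 + \partial_r\tilde f_2$ on $\R_+$ with $\tilde w(0)=0$, so that $\tilde w = w$ on $[0,T]$; extend $\tilde w$ by zero for $t<0$. The Fourier transform in $t$ converts the equation into
\begin{align*}
(\mathcal{T}-ikB\lambda)\hat{\tilde w}(\tau,\cdot) = \hat{\tilde f}_1(\tau,\cdot) + \partial_r\hat{\tilde f}_2(\tau,\cdot), \qquad \lambda := -\tau/(kB),
\end{align*}
to which \eqref{2.4} applies pointwise in $\tau\in\R$. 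Squaring both sides, integrating over $\tau$, and invoking Plancherel's identity produces
\begin{align*}
\nu\|\partial_r w\|_{\LtT(\Ltr)}^2 + \mu_k\|w/r\|_{\LtT(\Ltr)}^2 \lesssim \mu_k^{-1}\|rf_1\|_{\LtT(\Ltr)}^2 + \nu^{-1}\|f_2\|_{\LtT(\Ltr)}^2,
\end{align*}
since the data vanish outside $[0,T]$ and $\tilde w \equiv w$ there. Taking square roots gives the two $\LtT(\Ltr)$-bounds, and inserting them back into the energy estimate closes the $\LoT(\Ltr)$ bound.

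\textbf{Main obstacle.} The time Fourier transform requires $\tilde w \in L^2(\R;\Ltr)$, which is not automatic: $\mathcal{T}$ only has pseudospectral bound $\Phi(\mathcal{T})$, while its genuine $\Ltr$-spectrum accumulates at $0$, so the associated semigroup is merely contractive. I would bypass this by replacing $\mathcal{T}$ with $\mathcal{T}_\epsilon := \mathcal{T} + \epsilon I$ for small $\epsilon>0$, whose semigroup decays like $e^{-\epsilon t}$, ensuring $\tilde w_\epsilon \in L^2(\R_+;\Ltr)$. The proof of Lemma \ref{resolvent estimate-1} carries over verbatim to $\mathcal{T}_\epsilon$, since the only modification in $\Re\langle F,w\rangle_\Ltr$ is an extra favorable term $\epsilon\|w\|_\Ltr^2 \geq 0$; hence \eqref{2.4} holds for $\mathcal{T}_\epsilon$ with constants independent of $\epsilon \geq 0$. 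Running the Fourier argument with $\mathcal{T}_\epsilon$ and taking $\epsilon \to 0^+$ then recovers the desired estimate for $\mathcal{T}$.
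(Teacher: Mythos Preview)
Your proposal is correct and takes essentially the same route as the paper: Fourier transform in $t$ to reduce to the resolvent equation, apply \eqref{2.4} pointwise in the dual frequency, integrate and invoke Plancherel for the $\LtT(\Ltr)$ bounds, then recover the $\LoT(\Ltr)$ bound from the energy identity. The only difference is that you explicitly justify the time-Fourier step via the regularization $\mathcal{T}_\epsilon = \mathcal{T}+\epsilon$ and a limit $\epsilon\to 0^+$, whereas the paper simply sets $T=\infty$ and extends by zero to $t<0$ without discussing $L^2_t$-integrability of $w$ on $\R_+$; your treatment is the more careful one on this point.
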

\begin{proof}
Without loss of generality, we set $T=\oo$ and define\footnote{Note that $w|_{t=0}=0$ ensures that \(w\) can be continuously extended by zero to \((-\infty, 0)\), thereby guaranteeing that \(t\) can undergo Fourier transform over the entire space \(\mathbb{R}\).}
	\begin{align*} &W(\lambda,r):=\int_0^{\infty}w(t,r)e^{-it\lambda}dt, \quad F(\lambda,r):=\int_0^{\infty}f(t,r)e^{-it\lambda}dt,  \\
&\qquad \quad F_\star(\lambda,r):=\int_0^{\infty}f_\star(t,r)e^{-it\lambda}dt, \quad \text{for}\ \star=1,2.
	\end{align*}
Then the inhomogeneous equation can thus be transferred into
	\begin{align*} (ikB\lambda+\mathcal{T})W(kB\lambda,r)=F=\left(F_1+\pa_r F_2\right)(kB\lambda,r),
	\end{align*}
from which and  Lemma \ref{resolvent estimate-1}, we infer
\begin{equation*}
\nu \|\partial_{r}W(kB\lambda,\cdot)\|_{\Ltr}^2 +\mu_k \|W(kB\lambda,r)/r\|_{\Ltr}^2\lesssim  \mu_k^{-1} \|rF_1(kB\lambda,r)\|_{\Ltr}^2 + \nu^{-1}\|F_2(kB\lambda,\cdot)\|_{\Ltr}^2.
 \end{equation*}
By integrating the above inequality over $\R$ with respect to $\lambda$ variable, we obtain
\begin{equation*}
\nu \|\partial_{r}W\|_{L^2_\lambda(\R,\Ltr)}^2 +\mu_k \|W/r\|_{L^2_\lambda(\R,\Ltr)}^2\lesssim  \mu_k^{-1} \|rF_1\|_{L^2_\lambda(\R,\Ltr)}^2 + \nu^{-1}\|F_2\|_{L^2_\lambda(\R,\Ltr)}^2.
 \end{equation*}
Then we get, by using Plancherel's equality, that
\begin{equation}\label{2.13}
\nu\|\partial_{r} w\|_{\LtT(\Ltr)}^2 +\mu_k\|w/r\|_{\LtT(\Ltr)}^2\lesssim \mu_k^{-1} \|rf_1\|_{\LtT(\Ltr)}^2+\nu^{-1} \|f_2\|_{\LtT(\Ltr)}^2.
\end{equation}

Whereas we get,  by taking the real part of the $\Ltr$ inner product of \eqref{2.10} with $w$,
	\begin{align*}
\frac{1}{2}\frac{d}{dt}\|w\|_{\Ltr}^2\leq \bigl|\langle f,w\rangle_\Ltr\bigr| \leq \|rf_1\|_\Ltr\|w/r\|_\Ltr + \|f_2\|_\Ltr \|\pa_r w\|_\Ltr.
	\end{align*}
By integrating the above inequality over $[0,T]$ and then using \eqref{2.13}, we obtain
\begin{align*}
 \|w \|_{\LoT(\Ltr)}^2 \lesssim  \mu_k^{-1} \|rf_1\|_{L^2_T(\Ltr)}^2 +\nu^{-1} \|f_2\|_{L^2_T(\Ltr)}^2,
 \end{align*}
which together with \eqref{2.13} ensures \eqref{L2 space time estimates}. We thus complete the proof of Lemma
\ref{abstract ineq inhomogenous equations-low mode}.
\end{proof}


\subsubsection{Space-time estimates of the homogeneous part $\wkht$}

\begin{proposition}\label{estimate of homogeneous eqs}
For any $k \in \Z\backslash\{0\}$,  $|kB|\gg \nu >0$ and $T>0$, there exists constant $C>0$ independent of $\nu,k,B,T$, so that
    \begin{equation}
\label{estimate of homogeneous eqs-1}\begin{split}
     & \|\wkht\|_{\LoT(\Ltr)} + \nu^{1/2} \|\partial_r \wkht\|_{\LtT(\Ltr)} + \mu_k^{1/2}\|\wkht/r\|_{\LtT(\Ltr)} \leq C \|w_k(0)\|_{\Ltr}.
    \end{split}
    \end{equation}
\end{proposition}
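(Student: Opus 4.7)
The plan is to combine a direct energy identity with a duality argument that feeds off the already-proven inhomogeneous estimate Lemma \ref{abstract ineq inhomogenous equations-low mode}.

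First, I would take the real part of the $\Ltr$ inner product of the homogeneous equation with $\wkht$. Via Lemma \ref{trivial w' lemma}, this produces
\begin{equation*}
\tfrac{1}{2}\tfrac{d}{dt}\|\wkht\|_\Ltr^2 + \nu\|\partial_r \wkht\|_\Ltr^2 + \nu(k^2 - 1/4)\|\wkht/r\|_\Ltr^2 = 0.
\end{equation*}
Since $k\in\Z\setminus\{0\}$ makes $k^2 - 1/4 \geq 3k^2/4$, integrating on $[0,T]$ immediately gives $\|\wkht\|_{\LoT(\Ltr)}\leq \|w_k(0)\|_\Ltr$, $\nu^{1/2}\|\partial_r \wkht\|_{\LtT(\Ltr)}\lesssim \|w_k(0)\|_\Ltr$, and the partial bound $(\nu k^2)^{1/2}\|\wkht/r\|_{\LtT(\Ltr)}\lesssim \|w_k(0)\|_\Ltr$. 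In view of $\mu_k = \max\{\nu k^2,\kappa_k\}$, the only remaining piece is the enhanced-dissipation estimate $\kappa_k^{1/2}\|\wkht/r\|_{\LtT(\Ltr)}\lesssim \|w_k(0)\|_\Ltr$.

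For this I would invoke duality. By density it suffices to show $|\langle \wkht/r,\psi\rangle_{\LtT(\Ltr)}| \lesssim \mu_k^{-1/2}\|w_k(0)\|_\Ltr \|\psi\|_{\LtT(\Ltr)}$ for every $\psi\in \mathcal{C}^\infty_c((0,T)\times (1,\infty))$. Let $g$ solve the backward dual problem
\begin{equation*}
-\partial_t g + \mathcal{T}^* g = \psi/r,\quad g|_{t=T} = 0,\quad g|_{r=1,\infty}=0,
\end{equation*}
with $\mathcal{T}^* := -\nu\Delta_{k,r} - ikB/r^2$. The time-reversal $\tilde g(s,r) := g(T-s,r)$ turns this into a forward inhomogeneous equation for $\mathcal{T}^*$ with zero initial data and source $\psi(T-s,r)/r$. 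Because the resolvent bound of Lemma \ref{resolvent estimate-1} depends on $B$ only through $|kB|$, Lemma \ref{abstract ineq inhomogenous equations-low mode} applies verbatim to $\mathcal{T}^*$; choosing $f_1 = \psi(T-\cdot,\cdot)/r$ and $f_2 = 0$ yields $\|\tilde g\|_{\LoT(\Ltr)} \lesssim \mu_k^{-1/2}\|\psi\|_{\LtT(\Ltr)}$, hence $\|g(0)\|_\Ltr = \|\tilde g(T)\|_\Ltr \lesssim \mu_k^{-1/2}\|\psi\|_{\LtT(\Ltr)}$. An integration by parts in both $t$ and $r$, exploiting $\partial_t \wkht + \mathcal{T}\wkht = 0$, the Dirichlet condition $\wkht|_{r=1}=0$, and $g(T)=0$, collapses the pairing to $\langle \wkht/r,\psi\rangle_{\LtT(\Ltr)} = \langle w_k(0),g(0)\rangle_\Ltr$, and Cauchy-Schwarz finishes the bound.

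The chief obstacle is precisely the issue highlighted by the paper: Gearhart-Pr\"uss (Lemma \ref{GP lemma}) cannot be applied directly because Lemma \ref{resolvent estimate-1} is weighted, with an $r$-weight on the source and a $1/r$-weight on the solution, so $\mathcal{T}$ is not dissipative between two copies of a single Hilbert space. The duality device above is what salvages the argument: by running the inhomogeneous estimate on the adjoint problem whose source is $\psi/r$, the $1/r$ factor precisely absorbs the $r$-weight demanded by Lemma \ref{abstract ineq inhomogenous equations-low mode}, producing a bound expressible in terms of the unweighted $\|w_k(0)\|_\Ltr$.
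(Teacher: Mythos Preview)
Your proof is correct and takes a genuinely different, and considerably shorter, route than the paper's. The paper decomposes $\wkht = e^{-ikBt/r^2}w^{(1)} + w^{(2)}$, where the phase is chosen so that $w^{(1)}$ satisfies a real-coefficient parabolic equation with an additional damping term $\nu|2kBt/r^3|^2$; controlling the crucial piece $\kappa_k^{1/2}\|w^{(1)}/r\|_{\LtT(\Ltr)}$ then requires a dyadic decomposition in $r$ and a tailored functional $\|w^{(1)}\|_{Y_t}$ (Proposition \ref{estimate of homogeneous eqs-main part}, Steps 2--4), plus a further commutator estimate (Lemma \ref{estimate of homogeneous eqs-main part-additional}) to close the bound for $w^{(2)}$ via Lemma \ref{abstract ineq inhomogenous equations-low mode}. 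Your duality argument bypasses all of this machinery: it transfers the weighted inhomogeneous bound of Lemma \ref{abstract ineq inhomogenous equations-low mode} from $\mathcal{T}^*$ back to the homogeneous problem for $\mathcal{T}$, exploiting that the source-side weight $r$ and the solution-side weight $1/r$ are exactly dual, so the pairing collapses to $\langle w_k(0),g(0)\rangle_\Ltr$ in unweighted $\Ltr$. The paper's route has the side benefit of explicitly exhibiting the enhanced-dissipation mechanism through the time-dependent damping and of producing the auxiliary bounds of Proposition \ref{estimate of homogeneous eqs-main part} and Lemma \ref{estimate of homogeneous eqs-main part-additional}; however, the proof of Theorem \ref{main them-1} only invokes Proposition \ref{space-time estimates}, so your simplification would feed through the rest of the paper without loss.
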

\begin{proof}We decompose $\wkht=e^{-ikBt/r^2}w^{(1)}+w^{(2)}$ with $w^{(1)}$ and $w^{(2)}$ satisfying
\begin{align}
\label{w-1} &\partial_tw^{(1)}-\nu\bigl(\partial_r^2- (k^2+\Theta^2)/r^2\bigr)w^{(1)}+\nu|2kBt/r^3|^2w^{(1)}=0,\\
\label{w-2}&\partial_tw^{(2)}+\mathcal{T}w^{(2)}= \nu\Big\{i\partial_r(e^{-ikBt/r^2}4kBt/r^3w^{(1)})\\
&\qquad \qquad   +e^{-ikBt/r^2}\bigl(i6kBt/r^4w^{(1)}+8|kBt/r^3|^2w^{(1)}+(\Theta^2+1/4)/r^2w^{(1)}\bigr)\Big\}, \nonumber \\
& w^{(1)}|_{t=0}=w_k(0), \quad w^{(2)}|_{t=0}=0, \quad w^{(1)}|_{r=1,\oo}=w^{(2)}|_{r=1,\oo}=0.
\end{align}
 Here $\Theta>0$ is a fixed constant which is sufficiently large below,  we may just take $\Theta\geq 10^9$ for instance.

 In view of \eqref{w-2} and  and $\mu_k^{-1}\leq \min\{\nu^{-1}k^{-2}, \nu^{-\f13}|kB|^{-\f23}\}$, we first get
 by applying Lemma \ref{abstract ineq inhomogenous equations-low mode}, that
 \begin{equation}\label{S2eq2}
     \begin{split}
&\|w^{(2)} \|_{\LoT(\Ltr)}+\nu^{1/2}\|\partial_rw^{(2)}\|_{\LtT(\Ltr)}+\mu_k^{1/2}\|w^{(2)}/r\|_{\LtT(\Ltr)} \\
& \lesssim \nu^{1/2}\|kBt/r^3w^{(1)}\|_{\LtT(\Ltr)}
+\mu_k^{-\f12}\nu \Bigl( \|kBt/r^3w^{(1)}\|_{\LtT(\Ltr)}\\
&\qquad\qquad \qquad\qquad\qquad\qquad\qquad\qquad+ \|(kBt)^2/r^5w^{(1)}\|_{\LtT(\Ltr)} +  \|w^{(1)}/r\|_{\LtT(\Ltr)} \Bigr)\\
& \lesssim \nu^{1/2}\|kBt/r^3w^{(1)}\|_{\LtT(\Ltr)} + \mu_k^{\f12}   \|w^{(1)}/r\|_{\LtT(\Ltr)}
+\mu_k^{-\f12}\nu  \|(kBt)^2/r^5w^{(1)}\|_{\LtT(\Ltr)} \\
& \lesssim \nu^{1/2}\|kBt/r^3w^{(1)}\|_{\LtT(\Ltr)} + \mu_k^{\f12}   \|w^{(1)}/r\|_{\LtT(\Ltr)}
+ \nu^\f56 |kB|^\f53\| t^2/r^5w^{(1)}\|_{\LtT(\Ltr)} .
     \end{split}
 \end{equation}

 Then the proof of Proposition \ref{estimate of homogeneous eqs} relies
Proposition \ref{estimate of homogeneous eqs-main part} and Lemma \ref{estimate of homogeneous eqs-main part-additional}, the proof of which will be postponed  below.

\begin{proposition}\label{estimate of homogeneous eqs-main part}
Let $k\in \Z\backslash\{0\}$,  $|kB|\gg \nu >0$ and $T>0$ and let $w^{(1)}$ solve  \eqref{w-1}. Then there exists constant $C>0$ independent of $\nu,k,B,T$, so that
\begin{equation}
    \label{eqs-0}
    \begin{split}
\|w^{(1)}\|_{\LoT(\Ltr)}^2 + \nu \|\partial_r w^{(1)}\|_{\LtT(\Ltr)}^2&+ \mu_k \|w^{(1)}/r\|_{\LtT(\Ltr)}^2\\
&+ \nu\|kBt/r^3w^{(1)}\|_{\LtT(\Ltr)}^2 \leq
 C\|w_k(0)\|_{ \Ltr}^2.
\end{split}
\end{equation}
\end{proposition}

\begin{lemma}\label{estimate of homogeneous eqs-main part-additional}
Let $k\in \Z\backslash\{0\}$, $|kB|\gg \nu >0$, $T>0$ and let $w^{(1)}$ solve  \eqref{w-1}. Then there exists constant $C>0$ independent of $\nu,k,B,T$, so that
\begin{equation}
    \label{eqs-additional}\begin{split}
&\nu^\f56 |kB|^\f53\| t^2/r^5w^{(1)}\|_{\LtT(\Ltr)} =\kappa_k^\f52\| t^2/r^5w^{(1)}\|_{\LtT(\Ltr)}
\leq C\|w_k(0)\|_{\Ltr}.
\end{split}
\end{equation}
\end{lemma}

We admit Proposition \ref{estimate of homogeneous eqs-main part} and Lemma \ref{estimate of homogeneous eqs-main part-additional}
for the time being and continue our proof of Proposition \ref{estimate of homogeneous eqs}. Indeed by substituting
\eqref{eqs-0} and \eqref{eqs-additional} into \eqref{S2eq2}, we achieve
\begin{align*}
&\|w^{(2)} \|_{\LoT(\Ltr)}+\nu^{1/2}\|\partial_rw^{(2)}\|_{\LtT(\Ltr)}+\mu_k^{1/2}\|w^{(2)}/r\|_{\LtT(\Ltr)} \lesssim\|w_k(0)\|_{\Ltr}^2,
\end{align*}
which together with \ref{estimate of homogeneous eqs-main part} ensures (\ref{estimate of homogeneous eqs-1}). We thus complete
the proof of Proposition \ref{estimate of homogeneous eqs}.
\end{proof}

 We remark that the proof of Proposition \ref{estimate of homogeneous eqs} has been reduced to estimating  $w^{(1)}$.
In view of the  $w^{(1)}$ equation \eqref{w-1}, roughly speaking, the enhanced dissipation near TC flow is reflected in the damping term $\nu |kBt/r^3|^2$, which provides various space-time estimates for $w^{(1)}$. This is the key point in proving  Proposition \ref{estimate of homogeneous eqs-main part}, which is the most crucial estimate in proving Theorem \ref{main them-1}. Moreover, we shall observe from the proof of Proposition \ref{estimate of homogeneous eqs-main part} below  that the estimate of \( \mu_k^{1/2}\|w^{(1)}/r\|_{\LtT(\Ltr)} \) is the most critical ingredient.

Let us turn to the proof of Proposition \ref{estimate of homogeneous eqs-main part} and Lemma \ref{estimate of homogeneous eqs-main part-additional}. We first present the proof of Proposition \ref{estimate of homogeneous eqs-main part}.

\begin{proof}[Proof of Proposition \ref{estimate of homogeneous eqs-main part}]  We shall  divide the proof  into the following four steps:

\no {\bf Step 1. Fundamental energy estimates.}

 We  get,  by first taking $\Ltr$ inner product of  \eqref{w-1} with \( w^{(1)} \) and then by integrating the resulting inequality over $[0,T]$, that
\begin{equation}
    \label{eqs-trivial}\begin{split}
    \|w^{(1)}(T) \|_{ \Ltr}^2+2\nu\|\partial_rw^{(1)}\|_{\LtT(\Ltr)}^2&+2\nu(k^2+\Theta^2)\|w^{(1)}/r\|_{\LtT(\Ltr)}^2\\
    &+8\nu\|kBt/r^3w^{(1)}\|_{\LtT(\Ltr)}^2
\leq\|w_k(0)\|_{ \Ltr}^2.
\end{split}
\end{equation}
Notice that $\mu_k=\max\{\nu k^2,\kappa_k \},$ it remains to deal with the
 estimate of $\nu^\f13 |kB|^\f23 \|w^{(1)}/r\|_\Ltr^2,$ which we present below.

 \no{\bf Step 2. Dyadic decomposition.}

 We first perform a dyadic decomposition of the region \([1, +\infty)\). Let
\begin{align*}
    \phi(r)=\begin{cases}
    (r-3/4)^3\bigl(6\cdot 12^5r^2-123\cdot 12^4r+631\cdot12^3\bigr), &\quad r\in[3/4,5/6], \\
    1, &\quad r\in[5/6,11/6], \\
    (23/12-r)^3\bigl(6\cdot 12^5(8/3-r)^2-123\cdot 12^4(8/3-r)+631\cdot12^3\bigr), &\quad r\in[11/6,23/12], \\
    0, &\quad \text{else}.
    \end{cases}
\end{align*}
It's easy to check that $\phi \in C^2(\R_+)$, \(\operatorname{supp}_r \phi \subset [3/4,23/12]\subset [2/3, 2]\) and for $r\in[3/4,5/6]$,
\begin{align*}
    |\phi'(r)|&\leq  3(r-3/4)^2\big|6\cdot 12^5r^2-123\cdot 12^4r+631\cdot12^3\big|+(r-3/4)^3\big|12^6r-123\cdot12^4\big| \\
    &\leq 3\cdot 12^{-2}\big(6\cdot 12^5\cdot(3/4)^2-123\cdot 12^4\cdot 3/4+631\cdot12^3\big) + 12^{-3}\big|12^6\cdot 3/4-123\cdot12^4\big| \\
    &\leq 3\cdot 12^{-2} \cdot 10\cdot 12^3 + 12^{-3}\cdot 15 \cdot 12^4 = 540
\end{align*}
and
\begin{align*}
    |\phi''(r)| &\leq 6(r-3/4)\big|6\cdot 12^5r^2-123\cdot 12^4r+631\cdot12^3\big| + 6(r-3/4)^2\big|12^6r-123\cdot12^4\big| \\
    &\quad + (r-3/4)^3 12^6\\
    &\leq 1/2\cdot 10\cdot 12^3 + 6\cdot 12^{-2}\cdot 15 \cdot 12^4 + 12^{-3}\cdot12^6 = 162\cdot 12^2.
\end{align*}
Then it follows from  the symmetric property of $\phi(r)$ that $|\phi'(r)| \leq 540$ and $|\phi''(r)|\leq 162\cdot 12^2$ for any $r\in [1,\infty)$.

Let
\begin{align*}
    \phi_j(r) := \phi\left(\frac{r}{2^j}\right) \andf \chi_j(r) := \frac{\phi_j(r)}{\sum_{j\geq 0 } \phi_j(r)},\quad \forall j \in \mathbb{N},
\end{align*}
then it can be verified that \(\sum_{j\geq 0 } \phi_j(r) \geq 1\) and
\begin{equation*}
\begin{cases}
    1\geq\chi_j(r)\geq0,   \quad \sum_{j\geq 0 }\chi_j(r) = 1, \quad 1\geq \sum_{j\geq 0 } \chi_j^2(r) \geq 1/2, \ \forall r\geq 1, \quad\chi_0(1)=1,\\
    \operatorname{supp}_r\chi_j \subset [2^{j+1}/3,2^{j+1}],\quad \chi_j\chi_\ell=0\quad\mbox{if}\quad |j-\ell|\geq 2.
\end{cases}
\end{equation*}
Furthermore, we have
\begin{align*}
|\partial_r \chi_j(r)| &\leq \frac{|\partial_r\phi_j(r)|}{\sum_{j\geq 0 }\phi_j(r)} + \frac{\phi_j(r) \bigl|\partial_r\sum_{j\geq 0 }\phi_j(r)\bigr|}{\bigl(\sum_{j\geq 0 } \phi_j(r)\bigr)^2}\leq  7\cdot 540\cdot 2^{-j} \leq  4\cdot 10^6 \cdot 2^{-j},\\
 |\partial_r^2 \chi_j(r)|&\leq \frac{|\partial_r^2\phi_j(r)|}{\sum_{j\geq 0 } \phi_j(r)}+ 2\frac{|\partial_r\phi_j(r)| \bigl|\partial_r\sum_{j\geq 0 } \phi_j(r)\bigr|}{\bigl(\sum_{j\geq 0 } \phi_j(r)\bigr)^2}+2\frac{\phi_j(r) \bigl|\partial_r^2\sum_{j\geq 0 } \phi_j(r)\bigr|}{\bigl(\sum_{j\geq 0 } \phi_j(r)\bigr)^2}\\
 &\leq 2^{-2j}(162\cdot 12^2 + 2\cdot 540\cdot6\cdot 540 + 2\cdot 6 \cdot 162\cdot 12^2) \leq 4\cdot 10^6 \cdot 2^{-2j}.
\end{align*}
Here we used the facts: \(\sum_{j\geq 0 } \phi_j(r) \geq 1\) and $\phi_j\phi_\ell=0$ if $ |j-\ell|\geq 2$  to deduce that for $r \in \operatorname{supp}_r \chi_j,$  $ k=0,1,2$,
\begin{align*}
\bigl|\partial_r^k\sum_{j\geq 0 } \phi_j(r)\bigr| \leq \sum_{|\ell-j|\leq 1 , \ell\geq 0} 2^{-k\ell} \sup_r |\partial_r^k \phi(r)| \leq 6\cdot 2^{-kj}\sup_r |\partial_r^k \phi(r)|.
\end{align*}
Below, we shall denote \((\chi'')_j\) and \((\chi')_j\) by \(\chi_j''\) and \(\chi_j'\), respectively. As a result, it comes out
\begin{align*}
|\chi_j''|,\quad |\chi_j'| \leq 4 \cdot 10^6.
\end{align*}

By virtue of the above dyadic decomposition, we can give the following equivalent definition of \(\|r^\alpha w^{(1)}/r\|_\Ltr\) by
\[
\|r^\alpha w^{(1)}\|_{\Ltr}^2 \approx \sum_{j \geq 0} 2^{2\alpha j} \|\eta_j\|_{\Ltr}^2,
\]
where we set \(\eta_j := \chi_j w^{(1)}\), which  satisfies
\begin{align*}
\left\{
    \begin{aligned}
&\partial_t\eta_j-\nu\bigl(\partial_r^2- (k^2+\Theta^2)/r^2\bigr)\eta_j+\nu|2kBt/r^3|^2\eta_j=-\nu (2^{-2j}\chi_j''w^{(1)}+2^{1-j}\chi_j'\partial_r w^{(1)}) \\
&\eta_j|_{t=0}=\chi_jw_k(0),\ \eta_j|_{r=\max\{1,2^{j+1}/3\}}=\eta_j|_{2^{j+1}}=0, \  (t,r) \in \R_+\times [\max\{1,2^{j+1}/3\}, 2^{j+1}].
\end{aligned}
\right.
\end{align*}

Noticing that  $w^{(1)} = \sum\limits_{\ell\geq 0} \eta_\ell(r)$ for $r\geq 1$ and $\chi_j\chi_\ell=0$ if $ |j-\ell|\geq 2$, we also have
$$\chi_j''w^{(1)} = \sum\limits_{|\ell-j|\leq1, \ell\geq0}\chi_j''\eta_\ell, \qquad \chi_j'\partial_r w^{(1)}= \sum\limits_{|\ell-j|\leq1, \ell\geq0} \chi_j'\partial_r \eta_\ell .$$
To control $\nu^\f13|kB|^\f23 \|w^{(1)}/r\|_\Ltr^2$, we introduce the following energy functionals:
$$ \|w^{(1)}\|_{Y_t}^2 := \f12 \sum_{j\geq0}  \|\eta_j\|_{L^\oo_t(\Ltr)}^2
 +\sum_{j\geq0}\|\eta_j\|_{X_t}^2\with \kappa_k=\nu^{\f13}|kB|^{\f23} \andf $$
$$\|\eta_j\|_{X_t}^2:= \f12 \|\eta_j(t)\|_{\Ltr}^2+\nu\|\partial_r \eta_j\|_{L^2_t(\Ltr)}^2 + \nu(k^2+\Theta^2)\|r^{-1}\eta_j\|_{L^2_t(\Ltr)}^2+ 4\kappa_k^3 \|\tau/r^3\eta_j\|_{L^2_t(\Ltr)}^2.$$


\no{\bf Step 3. The control \(\kappa_k\|w^{(1)}/r\|_{L^2_t(\Ltr)}^2\) by \(\|w^{(1)}\|_{Y_t}^2\).}

First, we  divide the domain into the following two parts:
\begin{enumerate}
    \item \(D_1:= \{(\tau,r) \in (0,t) \times (1,+\infty) : r^2 \leq \kappa_k \tau \}\);
    \item \(D_2 := \{(\tau,r) \in (0,t) \times (1,+\infty) : 0 \leq \kappa_k\tau \leq r^2 \}\).
\end{enumerate}
Here $D_1$ is the region where enhanced dissipation occurs. In fact, we have
\begin{equation}\label{step3 begin}
\begin{split}
\kappa_k\|w^{(1)}/r\|_{L^2_t(\Ltr)}^2 =  \kappa_k\Bigl(\iint_{(\tau,r)\in D_1}+\iint_{(\tau,r)\in D_2}\Bigr) |w^{(1)}/r|^2 dr d\tau=:\mathcal{I}_1+\mathcal{I}_2.
\end{split}
\end{equation}
For $\mathcal{I}_1$, since $\sum\limits_{j\geq 0} \chi_j^2 \geq 1/2$, one has
\begin{equation}
    \mathcal{I}_1 \leq  \kappa_k^3\|\tau/r^3w^{(1)}\|_{L^2_t(\Ltr)}^2 \leq 2\kappa_k^3 \sum_{j\geq 0} \|\tau/r^3\eta_j\|_{L^2_t(\Ltr)}^2 \leq 1/2 \|w^{(1)}\|_{Y_t}^2.
\end{equation}
And for $\mathcal{I}_2$, thanks to $r^{-1}\chi_j \leq \frac{3}{2}\cdot 2^{-j}$, one has
\begin{equation}\label{Step 3 end}
\begin{split}
    \mathcal{I}_1 &\leq 2 \kappa_k \sum_{j\geq 0} \iint_{(\tau,r)\in D_2}|r^{-1}\eta_j|^2 drd\tau\\
    &\leq 2 \kappa_k \sum_{j\geq 0} \int_0^{\min\left\{t,2^{2j+2}/\kappa_k\right\}} \int_1^\oo 9/2^{2j+2}|\eta_j|^2 drd\tau\\
    &\leq 18 \sum_{j\geq 0}   \|\eta_j\|_{L^\oo_t(\Ltr)}^2 \leq 36 \|w^{(1)}\|_{Y_t}^2.
\end{split}
\end{equation}

By summarizing (\ref{step3 begin}-\ref{Step 3 end}), we obtain
\begin{equation}\label{step 3 final}
\kappa_k\|w^{(1)}/r\|_{L^2_t(\Ltr)}^2\lesssim \|w^{(1)}\|_{Y_t}^2.
\end{equation}

\newpage

 \no{\bf Step 4. The estimation of \(\|w^{(1)}\|_{Y_t}\).}

By taking the \(\Ltr\) inner product of \(\eta_j\) with the equation of \(\eta_j\), and using integration by parts, we find
\begin{equation*}
    \begin{split}
\frac{d}{dt}\|\eta_j\|_{X_t}^2 &\leq  \nu\sum_{|\ell-j|\leq1, \ell\geq0}\bigl(2^{-2j}\|r^2\chi_j''\|_{\Lor}\|r^{-1}\eta_\ell\|_{\Ltr}+2^{1-j}\|r\chi_j'\|_{\Lor}\|\partial_r \eta_\ell\|_{\Ltr}\bigr) \|r^{-1}\eta_j\|_{\Ltr},
    \end{split}
\end{equation*}
which together with the facts: $\operatorname{supp}_r\chi_j \subset [2^{j+1}/3,2^{j+1}]$ and \(\|\chi_j''\|_{\Lor}, \|\chi_j'\|_{\Lor} \leq 4 \cdot 10^6\), ensures that
\begin{equation*}
    \begin{split}
    \frac{d}{dt}\|\eta_j\|_{X_t}^2 &\leq 16\cdot 10^6\nu \sum_{|\ell-j|\leq1, \ell\geq0}(\|r^{-1}\eta_\ell\|_{\Ltr}+\|\partial_r \eta_\ell\|_{\Ltr}) \|r^{-1}\eta_j\|_{\Ltr}.
    \end{split}
\end{equation*}
By integrating the above inequality over \([0,t]\), and then using the definition of \(X_t\), we get
\begin{equation*}
    \begin{split}
\frac{1}{2}\|\eta_j\|_{L^\oo_t(\Ltr)}^2+\|\eta_j\|_{X_t}^2 &\leq 2\times \Bigl(1/2\|\eta_j(0)\|_{\Ltr}^2+ 16\cdot 10^6(1/\Theta+1/\Theta^2)  \|\eta_j\|_{X_t}\sum_{|\ell-j|\leq1, \ell\geq0} \|\eta_\ell\|_{X_t} \Bigr).\\
    \end{split}
\end{equation*}
By summing over \(j \geq 0\) and using  Cauchy inequality, we achieve
\begin{equation*}
\begin{split}
    \|w^{(1)}\|_{Y_t}^2 &\leq \sum_{j\geq0}\|\eta_j(0)\|_{\Ltr}^2 + 16\cdot 10^6(1/\Theta+1/\Theta^2) \Bigl( \|w^{(1)}\|_{Y_t}^2 + 3\sum_{j\geq0}\sum_{|\ell-j|\leq1, \ell\geq0} \|\eta_\ell\|_{X_t}^2\Bigr)\\
    &\leq \sum_{j\geq0}\|\eta_j(0)\|_{\Ltr}^2  +16\cdot10^7(1/\Theta+1/\Theta^2) \|w^{(1)}\|_{Y_t}^2.
\end{split}
\end{equation*}
Due to $\Theta\geq10^9$, we obtain
\begin{equation*}
\begin{split}
    \|w^{(1)}\|_{Y_t}^2 \lesssim \|w^{(1)}(0)\|_{\Ltr}^2= \|w_k(0)\|_{\Ltr}^2.
\end{split}
\end{equation*}
By inserting the above estimate into \eqref{step 3 final}, we achieve
\begin{equation*} 
\kappa_k\|w^{(1)}/r\|_{L^2_t(\Ltr)}^2 \lesssim \|w_k(0)\|_{\Ltr}^2,
\end{equation*}
which together with \eqref{eqs-trivial} ensures \eqref{eqs-0}. We thus  complete the proof of Proposition \ref{estimate of homogeneous eqs}.
\end{proof}

Next we present the proof of Lemma \ref{estimate of homogeneous eqs-main part-additional}.

\begin{proof}[Proof of Lemma \ref{estimate of homogeneous eqs-main part-additional}]
  We set $\tilde{w}:=\kappa_kt/r^2 w^{(1)}$, then in view of \eqref{w-1}, $\tilde{w}$ satisfies
\begin{align}\label{tilde{w}}
\left\{
\begin{aligned}
&\partial_t\tilde{w}-\nu\bigl(\partial_r^2- (k^2+\Theta^2)/r^2\bigr)\tilde{w}+\nu|2kBt/r^3|^2\tilde{w}= 4\nu/r\partial_r\tilde{w} +2\nu/r^2\tilde{w}+\tilde{w}/t,\\
& \tilde{w}|_{t=0}=0, \quad \tilde{w}|_{r=1,\oo}=0.
\end{aligned}
\right.
\end{align}
By taking $\Ltr$ inner product of  the above equation with \( \tilde{w} \) and using integration by parts, we obtain
\begin{equation}\label{S2eq4}
    \begin{split}
  & \f12\frac{d}{dt}\|\tilde{w} \|_{\Ltr}^2+\nu\|\partial_r\tilde{w}\|_{\Ltr}^2+\nu(k^2+\Theta^2)\|\tilde{w}/r\|_{\Ltr}^2+4\nu\|kBt/r^3\tilde{w}\|_{\Ltr}^2\\
&\leq\big|\langle4\nu/r\partial_r\tilde{w} +2\nu/r^2\tilde{w},\tilde{w}\rangle_\Ltr\big|+\big|\langle\tilde{w}/t,\tilde{w}\rangle_\Ltr\big| \\
&\leq 4\nu\|\partial_r\tilde{w}\|_{\Ltr}\|\tilde{w}/r\|_{\Ltr}+2\nu\|\tilde{w}/r\|_{\Ltr}^2+\big|\langle\tilde{w}/t,\tilde{w}\rangle_\Ltr\big|.
\end{split}
\end{equation}
Since \( \Theta \) is sufficiently large,   \( 4\nu\|\partial_r\tilde{w}\|_{\Ltr}\|\tilde{w}/r\|_{\Ltr}+2\nu\|\tilde{w}/r\|_{\Ltr}^2 \) can be absorbed by the left-hand side.

Whereas for \( \big|\langle\tilde{w}/t,\tilde{w}\rangle_\Ltr\big| \), we have
\begin{align*}
\big|\langle\tilde{w}/t,\tilde{w}\rangle_\Ltr\big|\leq& \|\kappa^\f32 t/r^3w^{(1)}\|_\Ltr \|\kappa^\f12 /rw^{(1)}\|_\Ltr  .
\end{align*}
Notice that  $\nu |kB|^2 = \kappa_k^3,$ we get, by
by integrating \eqref{S2eq4} over $[0,T]$ and using \eqref{eqs-0}, that
\begin{align*}
  &\|\tilde{w}(T) \|_{\Ltr}^2+\nu\|\partial_r\tilde{w}\|_{\LtT(\Ltr)}^2+\nu(k^2+\Theta^2)\|\tilde{w}/r\|_{\LtT(\Ltr)}^2+4\nu\|kBt/r^3\tilde{w}\|_{\LtT(\Ltr)}^2 \\
  &\lesssim \|\kappa_k^\f32 t/r^3w^{(1)}\|_{\LtT(\Ltr)} \|\kappa_k^\f12 /rw^{(1)}\|_{\LtT(\Ltr)}
  \lesssim \|w_k(0)\|_\Ltr^2.
\end{align*}
Observing that $\nu\|kBt/r^3\tilde{w}\|_{\LtT(\Ltr)}^2=\kappa_k^5\| t^2/r^5w^{(1)}\|_{\LtT(\Ltr)}^2,$
we complete the proof of Lemma \ref{estimate of homogeneous eqs-main part-additional}.
\end{proof}

\section{Proof of Theorems \ref{main them-1} and \ref{main them-2}.}

We first present the proof of Theorem \ref{main them-1}. Indeed
the proof of Theorem \ref{main them-1} follows the same lines as that of Lemma \ref{estimate of homogeneous eqs-main part-additional}.

\begin{proof}[\bf{Proof of Theorem \ref{main them-1}}]
 We introduce
    \begin{equation}
  \label{E_k} 
  \begin{split}
  &E_k(\eta) := \|\eta\|_{\LoT(\Ltr)} + \nu^{1/2} \|\partial_r \eta\|_{\LtT(\Ltr)} + \mu_k^{1/2}\|\eta/r\|_{\LtT(\Ltr)}\andf\\
   &\Lambda := 1+\kappa_kt/r^2, \quad \eta_q := \Lambda^q \wkht\quad \mbox{for}\quad q\in \mathbb{N}.
   \end{split}
    \end{equation}
In view of \eqref{homogenous eq tc}, we find
\begin{align*}
\left\{
\begin{aligned}
        & \partial_t \eta_{q} +\mathcal{T}\eta_{q} = \pa_t\Lambda^q \wkht -\nu \pa_r^2 \Lambda^q \wkht -2\nu \pa_r \Lambda^q  \pa_r\wkht\\
        &\qquad  \qquad \quad=\pa_t \Lambda^q \wkht - \nu \frac{\pa_r^2\Lambda^q}{\Lambda^q}\eta_q + 2\nu \Big(\frac{\pa_r\Lambda^q}{\Lambda^q}\Big)^2 \eta_q -2\nu \frac{\pa_r\Lambda^q}{\Lambda^q}\pa_r \eta_q \\
         &\qquad  \qquad \quad = q\pa_t \Lambda \eta_{q-1}+ \Big(  (q^2+q)\nu \Big| \frac{\pa_r \Lambda}{\Lambda} \Big|^2 - q \nu \frac{\pa_r^2 \Lambda}{\Lambda}\Big)  \eta_q  -2q \nu \frac{\pa_r \Lambda}{\Lambda}\pa_r \eta_q,\\
         &\eta_{q}(t=0) =w_k(0), \quad  \eta_{q}|_{r=1,\infty}=0.
\end{aligned}
\right.
\end{align*}
 Observing that $|\pa_t\Lambda| \leq \frac{\kappa_k}{r^2}$, $\Big|\frac{\pa_r^2\Lambda}{\Lambda}\Big| \lesssim r^{-2}$ and $\Big|\frac{\pa_r\Lambda}{\Lambda}\Big| \lesssim r^{-1}$,  we get, by applying Proposition \ref{space-time estimates}, that
\begin{equation}\label{3.2,a}
     E_k(\eta_{0}) =  E_k(\eta) \lesssim \|w_k(0)\|_\Ltr,
\end{equation}
and for $q\geq 1$
\begin{equation*}
\begin{split}
   E_k(\eta_{q})  &\leq C\|w_k(0)\|_\Ltr+ C \mu_k^{-1/2}\Big(q  \kappa_k\|\eta_{q-1}/r\|_{\LtT(\Ltr)}\\
   &\qquad\qquad\qquad\qquad\qquad\qquad+  (q^2+q)\nu \|\eta_{q}/r\|_{\LtT(\Ltr)} + q\nu \|\partial_r \eta_{q}\|_{\LtT(\Ltr)}\Big)\\
    & \leq C\|w_k(0)\|_\Ltr+  C q\kappa_k/\mu_k E_k(\eta_{q-1}) + C\bigl( (q^2+q)\nu/\mu_k+ q(\nu/\mu_k)^{1/2} \bigr) E_k(\eta_{q}).
    \end{split}
\end{equation*}
Under the assumption of Theorem \ref{main them-1} that $\frac{\nu}{|kB|} \ll (1+q)^{-3}$, we deduce from  the definition of $\kappa_k$ and $\mu_k$, that
$$\frac{\kappa_k}{\mu_k} \leq 1, \quad \frac{\nu}{\mu_k}\leq \frac{\nu}{\nu^\f13 |kB|^\f23}\ll (1+q)^{-2}.$$
So that the term $C\bigl( (q^2+q)\nu/\mu_k+ q(\nu/\mu_k)^{1/2} \bigr) E_k(\eta_{q})$ on the right-hand side can be absorbed, we thus obtain
\begin{equation}\label{3.3,a}
    E_k(\eta_{q}) \lesssim \|w_k(0)\|_\Ltr+ q E_k(\eta_{q-1}).
\end{equation}

By virtue of  \eqref{3.2,a} \eqref{3.3,a}, we get, by using an iteration argument, that
\begin{equation*}
         E_k(\eta_{q}) \lesssim_q  \|w_k(0)\|_\Ltr,
\end{equation*}
which completes the proof of Theorem \ref{main them-1}.
\end{proof}

Next we present the proof of Theorem \ref{main them-2}.

\begin{proof}[\bf{Proof of Theorem \ref{main them-2}}]
    We begin with a simpler parabolic equation:
\begin{equation}\label{S3eq4}
    \begin{cases}
         \partial_t \tilde{\eta}+\mathcal{T}\tilde{\eta} = 0, \quad (t,r)\in \R_+ \times [1,1+\delta],\\
         \tilde{\eta}(t=0) =0, \quad  \tilde{\eta}|_{r=1}=0,\quad \tilde{\eta}|_{r=1+\delta}=g(t) ,
    \end{cases}
\end{equation}
where $\delta\ll1$ will be determined  later on and $g(t)$ is a smooth function satisfying
\begin{equation*}
    g(t)=0, \quad \text{for}\ 0\leq t \leq 1 \ \text{or} \ t\geq 2.
\end{equation*}
It's easy to construct a  smooth function $f(t,r)$ which satisfies
$$f(0,r)=0,\quad f(t,1) = 0,\quad f(t,1+\delta)=g(t), $$
$$\|\nabla_{t,r}^\al f(t,r)\|_{L^\oo_r[1,1+\delta]}\lesssim \mathds{1}_{1\leq t \leq2}, \quad \forall |\al|\leq 6.$$

Let $\eta:=\tilde{\eta}-f(t,r)$ and $G(t,r):= \partial_t f+ \mathcal{T}f,$ we observe from \eqref{S3eq4} that
\begin{equation}\label{appendix a equation of eta}
    \begin{cases}
         \partial_t \eta +\mathcal{T}\eta = -G(t,r), \quad (t,r)\in \R_+ \times [1,1+\delta],\\
         \eta(t=0) =0, \quad  \eta|_{r=1}=0,\quad \eta|_{r=1+\delta}=0 .
    \end{cases}
\end{equation}
By taking the real part of the $L^2$ inner product of  (\ref{appendix a equation of eta}) with $\eta$, we obtain (in what follows, we will abbreviate $L^p((1,1+\delta),dr), H^k((1,1+\delta),dr) $ to $L^p, H^k$ respectively.)
\begin{equation}\label{a.11}
    \f12\frac{d}{dt}\|\eta(t)\|_{L^2}^2 + \nu \|\partial_r \eta\|_{L^2}^2 + \nu(k^2-1/4)\|\eta/r\|_{L^2}^2 \leq \|G\|_{L^2}\|\eta\|_{L^2}.
\end{equation}
It follows from Poincar\'e inequality on $[1,1+\delta]$ that
\begin{equation}\label{appendix a Poincare ineq}
    \delta^{-1} \|\eta\|_{L^2} \leq C_P \|\partial_r \eta\|_{L^2}, \quad \text{for any}  \ \eta \in H^1 \  \text{with} \  \eta(1)=0 .
\end{equation}
By inserting (\ref{appendix a Poincare ineq}) into (\ref{a.11}), we find
\begin{equation*}
    \begin{split}
\frac{1}{2}\frac{d}{dt}\|\eta(t)\|_{L^2}^2 +\frac{\nu}{2}\|\partial_r \eta\|_{L^2}^2+\frac{\nu}{2C_P^2\delta^2} \| \eta\|_{L^2}^2 \leq \frac{C_P^2\delta^2}{\nu}\|G\|_{L^2}^2 + \frac{\nu}{4C_P^2\delta^2} \| \eta\|_{L^2}^2,
    \end{split}
\end{equation*}
which is equivalent to
\begin{equation*}
    \begin{split}
\frac{d}{dt}\|\eta(t)\|_{L^2}^2+\nu \|\partial_r \eta\|_{L^2}^2 +\frac{\nu}{2C_P^2\delta^2} \| \eta\|_{L^2}^2 \leq \frac{2C_P^2\delta^2}{\nu}\|G\|_{L^2}^2.
    \end{split}
\end{equation*}
By integrating the above inequality over $[0,T]$, we achieve
\begin{equation}\label{appendix a time decay of eta}
    \Big\|e^{\frac{\nu t}{4C_P^2\delta^2}}\eta\Big\|_{\LoT (L^2)}^2 +   \Big\|e^{\frac{\nu t}{4C_P^2\delta^2}}\partial_r \eta\Big\|_{\LtT(L^2)}^2 \lesssim_{\nu,\delta} 1.
\end{equation}

On the other hand, by the equation \eqref{appendix a equation of eta} and boundary condition of $\eta$, we find that
 $\partial_t \eta$ and $\partial_r^2 \eta$ satisfies respectively 
\begin{equation*}
    \begin{cases}
         \partial_t \partial_t \eta +\mathcal{T}\partial_t \eta = -\partial_t G(t,r), \\
         \partial_t \eta(t=0) =0, \quad  \partial_t \eta|_{r=1}=0,\quad \partial_t \eta|_{r=1+\delta}=0 ,
    \end{cases}
\end{equation*}
and
\begin{equation*}
    \begin{cases}
         \partial_t \partial_r^2\eta +\mathcal{T}\partial_r^2 \eta = -\partial_r^2 G(t,r) -(\nu k^2-\nu/4+ikB) \bigl(-4r^{-3} \partial_r \eta + 6r^{-4}\eta\bigr), \\
         \partial_r^2 \eta(t=0) =0, \quad \partial_r^2 \eta|_{r=1}=\nu^{-1}G(t,1),\quad \partial_r^2 \eta|_{r=1+\delta}=\nu^{-1}G(t,1+\delta).
    \end{cases}
\end{equation*}
Then by virtue of  \eqref{appendix a Poincare ineq} and (\ref{appendix a time decay of eta}), we  get, by a similar derivation of (\ref{appendix a time decay of eta}), that
\begin{equation*}
\begin{split}
   &\Big\|e^{\frac{\nu t}{4C_P^2\delta^2}} \partial_t \eta\Big\|_{\LoT(L^2)}^2 +   \Big\|e^{\frac{\nu t}{4C_P^2\delta^2}}\partial_t\partial_r \eta\Big\|_{\LtT (L^2)}^2 \\
   &\qquad + \Big\|e^{\frac{\nu t}{4C_P^2\delta^2}} \partial_r^2 \eta\Big\|_{\LoT (L^2)}^2 +   \Big\|e^{\frac{\nu t}{4C_P^2\delta^2}}\partial_r^3\eta\Big\|_{\LtT( L^2)}^2\lesssim_{\nu,\delta,B,k} 1,
    \end{split}
\end{equation*}
which together with \eqref{appendix a Poincare ineq}, \eqref{appendix a time decay of eta} and the properties of $f(t,r)$ ensures that
\begin{equation*}
    \Big\|e^{\frac{\nu t}{4C_P^2\delta^2}} \partial_t\tilde{\eta}(t) \Big\|_{\LoT(L^2)} +  \Big\|e^{\frac{\nu t}{4C_P^2\delta^2}} \tilde{\eta}(t) \Big\|_{\LoT(H^2)} \lesssim_{\nu,\delta,B,k} 1.
\end{equation*}

Then by extension theorem, it's easy to extend $\tilde{\eta}$ to  a smooth function on $(t,r) \in \R_+\times [1,\oo)$ which we still  denote by $\tilde{\eta}$, and which satisfies $\textrm{supp}_r\tilde{\eta} \subset [1,2+\delta]$ and
\begin{equation}\label{estimate for tilde eta}
     \Big\|e^{\frac{\nu t}{4C_P^2\delta^2}} \partial_t \tilde{\eta}(t)\Big\|_{\LoT(L^2((1,\oo),dr))} + \Big\|e^{\frac{\nu t}{4C_P^2\delta^2}} \tilde{\eta}(t)\Big\|_{\LoT(H^2((1,\oo),dr))} <\oo.
\end{equation}

Now we are in the position to define $(w_n, f_n)$. We set $F:= \partial_t \tilde{\eta}+ \mathcal{T}\tilde{\eta} $ and define
\begin{equation*}
    w_n(t,r):=\begin{cases}
        \tilde{\eta}(t-n,r), \quad &t \geq n,\\
        0,\quad & t\leq n,
    \end{cases}
\end{equation*}
\begin{equation*}
    f_n(t,r):=\begin{cases}
        F(t-n,r), \quad & t\geq n,\\
        0,\quad & t\leq n.
   \end{cases}
\end{equation*}

It is easy to check that $(w_n,f_n)\in C^\oo_{t,x}$ solves \eqref{inhomogenous eq tc} and satisfies
$$ \textrm{supp}_r w_n \subset [1,2+\delta], \quad  \textrm{supp}_r F_n \subset [1+\delta,2+\delta].$$
Also in view of the boundedness of $a_1,a_2 $ and $\phi(r)$ on $[1,2+\delta]\subset[1,3]$ and (\ref{estimate for tilde eta}),  we can always fix some $\delta$ to be so small  that (\ref{finite energy of w_n, f_n}) holds. Since $\tilde{\eta}$ is smooth, there exists   $0<\epsilon<\delta$ so that $\| e^{t\phi(r)}\tilde{\eta}\|_{L^2((0,\oo)\times (1,1+\delta-\epsilon))} \neq 0$.  This difference in the spatial support of $w_n$ and $f_n$ leads to
\begin{equation*}
    \begin{split}
&\limsup_{n \rightarrow +\oo} \frac{\|e^{t\phi(r)}a_1(r)w_n\|_{L^2((0,\oo)\times (1,\oo))}}{\|e^{t\phi(r)} a_2(r) f_n\|_{L^2((0,\oo)\times (1,\oo))}} = \limsup_{n \rightarrow +\oo} \frac{\|e^{n\phi(r)}e^{t\phi(r)}a_1(r)\tilde{\eta}\|_{L^2((0,\oo)\times (1,2+\delta))}}{\|e^{n\phi(r)}e^{t\phi(r)} a_2(r) F\|_{L^2((0,\oo)\times (1+\delta,2+\delta))}} \\
&\qquad \geq   \frac{\min_{r\in[1,2+\delta]}a_1(r)}{\max_{r\in[1,2+\delta]}a_2(r)} \limsup_{n \rightarrow +\oo} \frac{e^{n\phi(1+\delta-\epsilon)}  \| e^{t\phi(r)}\tilde{\eta}\|_{L^2((0,\oo)\times (1,1+\delta-\epsilon))}}{e^{n\phi(1+\delta)}\|e^{t\phi(r)}  F\|_{L^2((0,\oo)\times (1+\delta,2+\delta))}} =+\oo.
    \end{split}
\end{equation*}
This completes the proof of Theorem \ref{main them-2}.
\end{proof}

\section{Proof of Theorem \ref{main thm 3} and it's applications}\label{section 4}
We first present the proof of Theorem \ref{main thm 3}.
\begin{proof}[Proof of Theorem \ref{main thm 3}]
    Since $\phi(x)$ is continuous and non-constant, there exist two open subsets $V_1, V_2\subset \Omega$, so that
    \begin{equation}
     d_1  :=\sup_{x\in V_1}\phi(x) < \inf_{x\in V_2} \phi(x) =: d_2.
    \end{equation}

We  set
$$w(0,x)=0, \quad f(s,y):=\zeta(s)\xi(y) \overline{p}_\mathcal{L}(1-s,x_0,y)\in \mathcal{C}^\oo_c(\R_+\times \Omega),$$ where  $x_0 \in V_2$ will be chosen later,
$$0\leq \zeta(s)\in \mathcal{C}^\oo_c([1/8, 7/8]), \andf  \zeta(s)=1  \ \text{on} \ [1/4,3/4],$$
and
$$0\leq \xi(y)\in \mathcal{C}^\oo_c (V_1),\quad  \xi(y) = 1 \ \text{in  some open set } \ V_3 \subset V_1. $$
Then the solution of \eqref{abstracted evolution equation} can be represented as
\begin{equation}\label{4.2}
    w(t,x) =\int_0^t \int_\Omega p_\mathcal{L}(t-s,x,y) \zeta(s)\xi(y) \overline{p}_\mathcal{L}(1-s,x_0,y) dy ds.
\end{equation}
In particular, we have
\begin{align}
   & w(t,x) = 0 \quad \text{for }\ t\leq \f18, \label{4.3}\\
& w(1,x_0) = \int_0^1 \int_\Omega  \zeta(s)\xi(y) |p_\mathcal{L}(1-s,x_0,y)|^2 dy ds \geq 0.\label{4.4}
\end{align}
Since $\text{supp}_{t,x,y} p_\mathcal{L} = \R_+\times \Omega\times\Omega$,  we observe that $p_\mathcal{L}(t,x_0,y)$ is not identically zero for $(t,x_0,y) \in [\f14,\f34]\times V_2 \times V_3$. Hence we get
$w(1,x_0) >0$, by \eqref{4.4} and choosing  $x_0\in V_2$ properly. Fixing this $x_0$, we get from the continuity of $w(t,x)$, that
\begin{equation}
    w(t,x) >0 \quad \text{for some open set } \mathcal{O} \subset [\f14,\f34]\times V_2 \ \textrm{containing} \ (1,x_0) .
\end{equation}

Now we introduce
\begin{equation*}
    w_n(t,x):=\begin{cases}
      w(t-n,x), \quad &t \geq n,\\
        0,\quad & t\leq n,
    \end{cases}
\end{equation*}
\begin{equation*}
    f_n(t,x):=\begin{cases}
        f(t-n,x), \quad & t\geq n,\\
        0,\quad & t\leq n.
   \end{cases}
\end{equation*}

In view of \eqref{4.3}, it's easy to see that so defined $(w_n,f_n)$ is the solution of \eqref{abstracted evolution equation} and satisfies \eqref{1.15}. However, there holds
\begin{equation*}
    \begin{split}
&\limsup_{n \rightarrow +\oo} \frac{\|e^{t\phi(x)}a_1(x)w_n\|_{L^2((0,\oo)\times \Omega)}}{\|e^{t\phi(x)} a_2(x) f_n\|_{L^2((0,\oo)\times \Omega)}} = \limsup_{n \rightarrow +\oo} \frac{\|e^{n\phi(x)}e^{t\phi(x)}a_1(x)w\|_{L^2((0,\oo)\times \Omega)}}{\|e^{n\phi(x)}e^{t\phi(x)} a_2(x) f\|_{L^2((\f18,\f78)\times V_1)}} \\
&\qquad \geq   \frac{\min_{x\in V_2}a_1(x)}{\max_{x\in V_1}a_2(x)} \limsup_{n \rightarrow +\oo} \frac{e^{n d_2}  \|e^{t\phi(x)} w\|_{L^2(\mathcal{O})}}{e^{n d_1}\|e^{t\phi(x)}  f\|_{L^2([\f18,\f78]\times V_1)}} =+\oo.
    \end{split}
\end{equation*}
This completes the proof of Theorem \ref{main thm 3}.
\end{proof}

As applications, we give a few examples of Theorem \ref{main thm 3}.

\no {\bf Example 4.1.} Let $\Omega=\R^n$, $\mathcal{L}=-\Delta$.

It's well known that the heat kernel on $\R^n$ is $$H(t,x,y)= \frac{1}{(4\pi t)^{\frac{n}{2}}} e^{-\frac{|x-y|^2}{4t}} >0.$$ Thus,  this example satisfies the assumptions in Theorem \ref{main thm 3}.
Here we can  replace $-\Delta$ by a second-order uniformly elliptic operator in divergence form with smooth coefficients, whose fundamental solution enjoys the following two-side estimate (see \cite{A68}):
\begin{equation}\label{two side estimates}
    c_1 t^{-\frac{n}{2}} e^{-c_2\frac{|x-y|^2}{t}} \leq p_{\mathcal{L}}(t,x,y) \leq \frac{1}{c_1} t^{-\frac{n}{2}} e^{-\frac{|x-y|^2}{c_2t}},
\end{equation}
where $c_1<1 <c_2$ are two universal constants.
Also we can extend $-\Delta$ to a family of nonlocal elliptic operators whose parabolic fundamental solution vanishes nowhere, such as $(-\Delta)^{\frac{\alpha}{2}}$. For more examples, one may check \cite{CZ16,DZZ23} and references therein.

\no {\bf Example 4.2.} Let $\Omega\subset \R^n$ be any bounded domain or exterior domain with $\pa\Omega $ smooth, and let $\mathcal{L}=-\Delta$ with domain $D(\mathcal{L})= H^2(\Omega)\cap H_0^1(\Omega)$. It's well-known that, see for example \cite{ZQ03},
\begin{equation*}
    \Big(\frac{\rho(x)}{\sqrt{t}\wedge1}\wedge1 \Big) \Big(\frac{\rho(y)}{\sqrt{t}\wedge1}\wedge1 \Big) \frac{c_1 e^{-c_2\frac{|x-y|^2}{t}}}{t^\frac{n}{2}} \leq p_{-\Delta}(t,x,y) \leq  \Big(\frac{\rho(x)}{\sqrt{t}\wedge1}\wedge1 \Big) \Big(\frac{\rho(y)}{\sqrt{t}\wedge1}\wedge1 \Big) \frac{ e^{-\frac{|x-y|^2}{c_2t}}}{c_1t^\frac{n}{2}},
\end{equation*}
where $a\wedge b =\min\{a,b\}$, $\rho(x)=\text{dist}(x,\pa \Omega)$ and $c_1<1 <c_2$ are two universal constants. Hence this example also satisfies assumptions in Theorem \ref{main thm 3}.

\no {\bf Example 4.3.} Let $\Omega=\R^n$, $\mathcal{L}=-\Delta + V$, with $V$ being real. There are lots of literatures to investigate
the following problem: under what condition of $V$, will the two-side estimates \eqref{two side estimates} holds? One may see for example \cite{CW24,LS98} and references therein.

In Theorem \ref{main them-2}, $\Omega=[1,\oo)$  and $\mathcal{T}= -\nu \Delta_{k,r} + \frac{ikB}{r^2}$. Since $\frac{ikB}{r^2}$ is complex, the fundamental solution is also complex, thus we can not expect \eqref{two side estimates} to hold anymore. However since $\frac{ikB}{r^2}\in \mathcal{C}^\oo_b((1,\oo))$ is analytic, it seems possibly that $p_{\mathcal{T}}(t,r,r')$ is analytic, by estimating $\|\nabla^\alpha_{t,r,r'} p_\mathcal{T} \|_{L^2_{t,r,r'}((a,b)\times (1,\oo) \times (1,\oo) )}$ for any $0<a<b$ and $ \alpha \in \mathbb{N}^3$, and then we may deduce  Theorem \ref{main them-2} from
Theorem \ref{main thm 3}. But this involves tedious calculations, we just end up here and shall not pursue this direction here.

\appendix

\section{Basic estimates}

\begin{lemma}\label{Appendix A1-1}Let $w\in D$, there holds
\begin{align*}
&\|w/r^{1/2}\|_{\Lor}^2 \lesssim \|w/r\|_{\Ltr}\|w'\|_{\Ltr} +\|w/r\|_{\Ltr}^2.
\end{align*}
\end{lemma}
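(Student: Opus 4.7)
The plan is to establish the stronger pointwise bound
\[
\frac{|w(r)|^2}{r}\;\lesssim\;\|w/r\|_{\Ltr}\|w'\|_{\Ltr}+\|w/r\|_{\Ltr}^{2}\qquad\text{for every }r\geq 1,
\]
and then take the supremum over $r$ to obtain the claimed $L^\infty_r$ estimate. By density it suffices to argue for $w\in\mathcal{C}_c^\infty([1,\infty))\cap D$, or equivalently to note that any $w\in D=H^2\cap H_0^1$ satisfies $w,w'\in L^2(1,\infty)$, so $w$ is continuous and tends to $0$ at infinity; in particular $|w(s)|^2/s\to 0$ as $s\to\infty$.

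With that decay in hand, I would apply the fundamental theorem of calculus on $[r,\infty)$ to the function $s\mapsto |w(s)|^2/s$. Since
\[
\frac{d}{ds}\Bigl(\frac{|w(s)|^2}{s}\Bigr)=\frac{2\,\Re\bigl(w'(s)\overline{w(s)}\bigr)}{s}-\frac{|w(s)|^2}{s^{2}},
\]
this yields the representation
\[
\frac{|w(r)|^{2}}{r}=\int_{r}^{\infty}\frac{|w(s)|^{2}}{s^{2}}\,ds-\int_{r}^{\infty}\frac{2\,\Re\bigl(w'(s)\overline{w(s)}\bigr)}{s}\,ds.
\]
The first term is bounded by $\|w/r\|_{\Ltr}^{2}$, and the second term is controlled by Cauchy--Schwarz as
\[
\Bigl|\int_{r}^{\infty}\frac{2\,\Re(w'\overline w)}{s}\,ds\Bigr|\;\leq\;2\int_{r}^{\infty}|w'(s)|\,\frac{|w(s)|}{s}\,ds\;\leq\;2\,\|w'\|_{\Ltr}\,\|w/r\|_{\Ltr}.
\]
Combining these two estimates gives the desired pointwise bound uniformly in $r\geq 1$, which is exactly $\|w/r^{1/2}\|_{\Lor}^{2}\lesssim\|w/r\|_{\Ltr}\|w'\|_{\Ltr}+\|w/r\|_{\Ltr}^{2}$.

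There is essentially no obstacle here: the only subtlety is justifying the boundary value at infinity, but this follows from $w,w'\in L^2$ together with the Cauchy--Schwarz bound $|w(s)|^{2}=\bigl|\int_{s}^{\infty}2\Re(w'\overline w)\,d\tau\bigr|\lesssim\|w'\|_{\Ltr}\|w\|_{\Ltr}$, which shows $w(s)\to 0$ (hence $|w(s)|^{2}/s\to 0$). Everything else is a one-line integration and one application of Cauchy--Schwarz; the hypothesis $w\in H_0^1$ (boundary value at $r=1$) is not even needed for this particular lemma, only the $L^2$-decay at infinity.
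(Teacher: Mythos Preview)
Your proof is correct and follows essentially the same approach as the paper: apply the fundamental theorem of calculus to $s\mapsto |w(s)|^{2}/s$ and bound the resulting integrals by Cauchy--Schwarz. The only difference is that the paper integrates from $1$ to $r$ using the boundary condition $w(1)=0$, whereas you integrate from $r$ to $\infty$ using decay at infinity; your observation that the $H_0^1$ condition at $r=1$ is unnecessary is therefore accurate (though note that your final justification of $w(s)\to 0$ via $|w(s)|^{2}=\bigl|\int_s^\infty 2\Re(w'\overline w)\,d\tau\bigr|$ is slightly circular as written---the clean argument is that $|w(s)|^{2}$ has a limit since $(|w|^{2})'\in L^1$, and that limit must be $0$ because $w\in L^2$).
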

\begin{proof}
When $w|_{r=1,\infty}=0$, it can be inferred that
\begin{align*} |w/r^{1/2}|^2&=\int_1^r\partial_s(s^{-1}|w(s)|^2)ds\\
&=\int_1^r s^{-1}(w'(s)\overline{w}(s)+w(s)\overline{w}'(s))ds- \int_1^r s^{-2}|w|^2(s) ds\\
&\lesssim \|w/r\|_{\Ltr}\|w'\|_{\Ltr} +\|w/r\|_{\Ltr}^2.
\end{align*}
\end{proof}

\begin{lemma}\label{Appendix A1-2}
Let $r_0\in(1,\oo)$ and let $0<\tilde{\delta}\ll1$ be a small constant, there holds
\begin{align*}
\int_{r_0-\tilde{\delta}r_0}^{r_0+\tilde{\delta}r_0}r^{-1}dr\lesssim\tilde{\delta}.
\end{align*}
\end{lemma}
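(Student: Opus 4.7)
The plan is to reduce the integral to an elementary calculation by observing that on the interval of integration, $r$ is comparable to $r_0$. Since $0<\tilde{\delta}\ll 1$, for $r\in[r_0-\tilde{\delta}r_0,r_0+\tilde{\delta}r_0]$ we have $r\geq (1-\tilde{\delta})r_0\geq r_0/2$, so $r^{-1}\leq 2/r_0$. Multiplying this pointwise bound by the length $2\tilde{\delta}r_0$ of the interval would immediately yield
\begin{equation*}
\int_{r_0-\tilde{\delta}r_0}^{r_0+\tilde{\delta}r_0}r^{-1}\,dr\leq \frac{2}{r_0}\cdot 2\tilde{\delta}r_0=4\tilde{\delta},
\end{equation*}
which is exactly the claim.

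Alternatively, I would compute the integral explicitly as $\log\bigl(\frac{1+\tilde{\delta}}{1-\tilde{\delta}}\bigr)$ and apply the elementary estimate $\log\bigl(\frac{1+x}{1-x}\bigr)=2x+O(x^3)\lesssim x$ for $0<x\ll 1$. Either route is routine; there is no obstacle, and the only mild point to note is that the assumption $\tilde{\delta}\ll 1$ (rather than merely $\tilde{\delta}<1$) is what keeps the denominator $1-\tilde{\delta}$ bounded away from zero and absorbs the implicit constant in $\lesssim$.
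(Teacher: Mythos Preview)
Your proposal is correct, and your second alternative is essentially identical to the paper's proof: the paper computes the integral explicitly as $\ln\bigl(1+\frac{2\tilde{\delta}}{1-\tilde{\delta}}\bigr)$ and then uses $\ln(1+x)\leq x$ to bound it by $2\tilde{\delta}(1-\tilde{\delta})^{-1}\lesssim\tilde{\delta}$. Your first approach (pointwise bound $r^{-1}\leq 2/r_0$ times the interval length) is an equally valid and slightly quicker variant.
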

\begin{proof}
A direct computation yields
\begin{align*}
&\int_{r_0-\tilde{\delta}r_0}^{r_0+\tilde{\delta}r_0}r^{-1}dr=\ln\Big(1+\frac{2\tilde{\delta}r_0}{r_0-\tilde{\delta}r_0}\Big)
\leq  2\tilde{\delta}(1-\tilde{\delta})^{-1}\lesssim \tilde{\delta}.
\end{align*}
\end{proof}

\section{Resolvent estimates of Couette flow}
In order to establish the decay properties of  \eqref{the linearized operator for c}, we consider
\begin{align}\label{eq:R-w-navier}
&(\mathcal{C}-ik\lambda)w=- \nu(\partial_y^2-k^2)w+ik(y-\lambda)w=F,
\end{align}
where $\lambda\in\mathbb{R}$, $k\in\Z\backslash\{0\}$, $\Omega=\mathbb{R}$ or $\Omega=[0,1]$ and $w\in D(\mathcal{C})=\bigl\{w\in \LtO: w\in H^2(\Omega)\cap H^1_0(\Omega),\ yw\in \LtO\bigr\}$.
\begin{lemma}\label{prop:R-navier-v1}
Let $k\in\Z\backslash\{0\} $, $|k|\gg \nu >0$ and $\lambda\in \R$, there holds
\begin{align*}
&(\nu k^2)^{1/3}\|w\|_{\LtO} \leq C\|F\|_{\LtO}.
\end{align*}
\end{lemma}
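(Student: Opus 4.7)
The plan is to mimic the structure of the proof of Lemma \ref{resolvent estimate-1}, which is already carried out in much more technical detail for the Taylor--Couette operator. For the Couette operator the critical layer is just a single point $y=\lambda$ (rather than $r=\lambda^{-1/2}$), and one does not need a preliminary split into three regimes: the argument collapses to a single ``cutoff across the critical layer'' calculation.

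First I would record the trivial energy identity obtained from taking the real part of $\langle F,w\rangle_{L^2(\Omega)}$:
\begin{equation*}
\nu\|\partial_y w\|_{L^2(\Omega)}^2+\nu k^2\|w\|_{L^2(\Omega)}^2=\Re\langle F,w\rangle_{L^2(\Omega)}\le \|F\|_{L^2(\Omega)}\|w\|_{L^2(\Omega)}.
\end{equation*}
This gives $\nu^{1/2}\|\partial_y w\|_{L^2(\Omega)}+(\nu k^2)^{1/2}\|w\|_{L^2(\Omega)}\lesssim \|F\|_{L^2(\Omega)}^{1/2}\|w\|_{L^2(\Omega)}^{1/2}$, which is the right side $(\nu k^2)^{1/2}$ but only the trivial factor $(\nu k^2)^{1/2}$ appears; the enhanced $(\nu k^2)^{1/3}$ must come from the imaginary part.

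Second, I would set the critical layer width $\delta:=(\nu/|k|)^{1/3}$ and test the equation against $\tilde\rho\, w$, where $\tilde\rho(y):=\rho\bigl((y-\lambda)/\delta\bigr)$ with $\rho$ the same fixed monotone profile used in Lemma \ref{resolvent estimate-1}. Integration by parts on $-\nu\langle\partial_y^2 w,\tilde\rho w\rangle_{L^2(\Omega)}$ produces a boundary-free bulk term $\nu\int \tilde\rho|\partial_y w|^2\,dy$ (real) and a commutator $\nu\delta^{-1}\int\tilde\rho'\,\partial_y w\,\bar w\,dy$. Taking imaginary parts and using that $-\tilde\rho(y)(y-\lambda)\ge |y-\lambda|\mathds 1_{|y-\lambda|\ge\delta}-\delta\mathds 1_{|y-\lambda|<\delta}$ gives
\begin{equation*}
|k|\!\int_{|y-\lambda|\ge\delta}\!|y-\lambda|\,|w|^2dy\le \bigl|\langle F,\tilde\rho w\rangle_{L^2(\Omega)}\bigr|+\nu\delta^{-1}\|\partial_y w\|_{L^2(\Omega)}\|w\|_{L^2(\Omega)}+|k|\delta\!\int_{|y-\lambda|<\delta}\!|w|^2dy.
\end{equation*}
On the exterior region the integrand is bounded below by $\delta|w|^2$, while the central interval has length $2\delta$ and can be absorbed by the $L^\infty$ trace bound $\|w\|_{L^\infty}^2\lesssim \|w\|_{L^2(\Omega)}\|\partial_y w\|_{L^2(\Omega)}+\|w\|_{L^2(\Omega)}^2$ (or directly by Sobolev on $\mathbb{R}$; the $\Omega=[0,1]$ case uses $w(0)=w(1)=0$).

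Third, I would combine the three contributions: with $\delta=(\nu/|k|)^{1/3}$ we have $|k|\delta=(\nu k^2)^{1/3}$ and $\nu\delta^{-1}=\nu^{2/3}|k|^{1/3}=(\nu k^2)^{1/3}\cdot(\nu/|k|)^{1/3}\ll (\nu k^2)^{1/3}$ in the regime $|k|\gg\nu$. Plugging the trivial energy bound into the commutator term, and handling the interior piece via $|k|\delta\cdot 2\delta\|w\|_{L^\infty}^2\lesssim (\nu k^2)^{1/3}\cdot\delta\bigl(\|w\|_{L^2(\Omega)}\|\partial_y w\|_{L^2(\Omega)}+\|w\|_{L^2(\Omega)}^2\bigr)$, all the ``lower-order'' contributions can be absorbed by Young's inequality into $\tfrac12(\nu k^2)^{1/3}\|w\|_{L^2(\Omega)}^2+\tfrac12\nu\|\partial_y w\|_{L^2(\Omega)}^2$. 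What remains is
\begin{equation*}
(\nu k^2)^{1/3}\|w\|_{L^2(\Omega)}^2\lesssim \bigl|\langle F,w\rangle_{L^2(\Omega)}\bigr|+\bigl|\langle F,\tilde\rho w\rangle_{L^2(\Omega)}\bigr|\lesssim \|F\|_{L^2(\Omega)}\|w\|_{L^2(\Omega)},
\end{equation*}
and dividing by $\|w\|_{L^2(\Omega)}$ delivers the claim.

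The main obstacle I anticipate is precisely the bookkeeping at the critical layer: one must verify that the commutator term $\nu\delta^{-1}\|\partial_y w\|_{L^2(\Omega)}\|w\|_{L^2(\Omega)}$ and the interior contribution $|k|\delta\int_{|y-\lambda|<\delta}|w|^2$ can both be closed against the energy identity \emph{in the regime $|k|\gg\nu$}, which is where the hypothesis is used; the bound would fail without this smallness. A secondary but routine point is ensuring the argument is insensitive to whether $\Omega=\mathbb{R}$ or $\Omega=[0,1]$, and to whether the critical layer $\{y=\lambda\}$ lies inside $\Omega$ at all (if $\lambda\notin\Omega$ one can drop the cutoff step and obtain a stronger bound directly from the imaginary part).
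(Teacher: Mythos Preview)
Your proposal is correct, and the bookkeeping you outline closes: with $\delta=(\nu/|k|)^{1/3}$ one has $|k|\delta=(\nu k^2)^{1/3}$ and $\nu\delta^{-1}=|k|\delta^{2}=\nu^{2/3}|k|^{1/3}=[(\nu k^2)^{1/3}]^{1/2}\cdot\nu^{1/2}$, so the commutator and interior terms split by Young into $\epsilon(\nu k^2)^{1/3}\|w\|_{L^2(\Omega)}^2+C_\epsilon\,\nu\|\partial_y w\|_{L^2(\Omega)}^2$, and the latter is controlled by $|\langle F,w\rangle_{L^2(\Omega)}|$ from the energy identity. The hypothesis $|k|\gg\nu$ is used exactly once, to absorb the leftover $(\nu k^2)^{1/3}\delta\|w\|_{L^2(\Omega)}^2$ term.

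The paper takes a slightly different and marginally cleaner route: instead of the smooth cutoff $\tilde\rho$, it tests against the \emph{linear} multiplier $(y-\lambda)w$. Since the commutator $[\partial_y^2,\,y-\lambda]=2\partial_y$ is exact, one obtains directly
\[
|k|\,\|(y-\lambda)w\|_{L^2(\Omega)}^2\le \|F\|_{L^2(\Omega)}\|(y-\lambda)w\|_{L^2(\Omega)}+\nu\|\partial_y w\|_{L^2(\Omega)}\|w\|_{L^2(\Omega)},
\]
and then splits $\|w\|_{L^2(\Omega)}^2$ as $\|w\|_{L^2(E)}^2+\|w\|_{L^2(E^c)}^2$ with $E=\Omega\cap(\lambda-\delta,\lambda+\delta)$, bounding the exterior piece by $\delta^{-2}\|(y-\lambda)w\|_{L^2(\Omega)}^2$. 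Your cutoff method is the exact analogue of what the paper does for the Taylor--Couette operator in Lemma~\ref{resolvent estimate-1}; it is more robust (works for any strictly monotone shear) but costs an extra absorption step. The linear-weight trick exploits the special structure of Couette (linear phase) and avoids introducing $\tilde\rho$ altogether. Both approaches yield the same final bound with comparable effort.
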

\begin{proof}
By integration by parts, we first get
\beno
\langle F,w\rangle_\LtO=\nu\|w'\|_{\LtO}^2+\nu k^2\|w\|_{\LtO}+ik\int_\Omega(y-\lambda)|w|^2dy.
\eeno
By taking the real part, one has
\beno
\nu\|w'\|_{\LtO}^2\leq\|F\|_{\LtO}\|w\|_{\LtO}.
\eeno

We also get by integration by parts that
\begin{align*}
&\langle F,(y-\lambda)w\rangle_\LtO=-\nu\int_\Omega w''(y-\lambda)\overline{w}dy+\nu k^2\int_\Omega(y-\lambda)|w|^2dy+ik\|(y-\lambda)w\|_{\LtO}^2\\&=\nu\int_\Omega w'\overline{w}dy+\nu\int_\Omega(y-\lambda)|w'|^2dy+\nu k^2\int_\Omega(y-\lambda)|w|^2dy+ik\|(y-\lambda)w\|_{\LtO}^2.
\end{align*}
By taking the imaginary part, one has
\begin{align*}
&|k|\|(y-\lambda)w\|_{\LtO}^2\leq \|F\|_{\LtO}\|(y-\lambda)w\|_{\LtO}+\nu \|w'\|_{\LtO}\|w\|_{\LtO},
\end{align*}
which gives
\begin{align*}
&\|(y-\lambda)w\|_{\LtO}^2\leq |k|^{-2}\|F\|_{\LtO}^2+2|k|^{-1}\nu\|w'\|_{\LtO}\|w\|_{\LtO}.
\end{align*}

Let $\delta=\nu^{1/3}|k|^{-1/3}$, $E=\Omega\cap(\lambda-\delta,\lambda+\delta)$, $E^c=\Omega\setminus(\lambda-\delta,\lambda+\delta)$. Then we have
\begin{align*}
\|w\|_{\LtO}^2
=&\|w\|_{L^2(E^c)}^2+\|w\|_{L^2(E)}^2\leq \delta^{-2}\|(y-\lambda)w\|_{\LtO}^2+2\delta\|w\|_{L^\oo(\Omega)}^2\\ \leq&\delta^{-2}|k|^{-2}\|F\|_{\LtO}^2+2\delta^{-2}|k|^{-1}\nu\|w'\|_{\LtO}\|w\|_{\LtO}+4\delta\|w'\|_{\LtO}\|w\|_{\LtO}\\
=&(\nu k^2)^{-2/3}\|F\|_{\LtO}^2+6\delta\|w'\|_{\LtO}\|w\|_{\LtO}\\
\leq& (\nu k^2)^{-2/3}\|F\|_{\LtO}^2+6\delta\nu^{-1/2}\|F\|_{\LtO}^{1/2}\|w\|_{\LtO}^{3/2},
\end{align*}
which implies
\begin{align*}
&\|w\|_{\LtO}^2
\leq C\big((\nu k^2)^{-2/3}+\delta^4\nu^{-2}\big)\|F\|_{\LtO}^2\leq C(\nu k^2)^{-2/3}\|F\|_{\LtO}^2.
\end{align*}
\end{proof}

Now we prove that the pseudospectral bound $(\nu k^2)^{1/3}$ in Lemma \ref{eq:R-w-navier} is sharp for low frequency $k$.
\begin{lemma}\label{eq:R-w-navier-2}Let $|k|=1$ and $\nu <1$. There exist $\lambda_0\in \R$, $w_0\in D(\mathcal{C})$ and $C$ independent of $\nu,\lambda_0$, so that
\begin{align*}
\|(\mathcal{C}-i\lambda)w_0\|_{\LtO}\leq C\nu^{1/3} \|w_0\|_{\LtO}.
\end{align*}
\end{lemma}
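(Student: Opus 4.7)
The plan is to construct an explicit quasi-mode, mirroring the test-function construction used in the proof of Lemma~\ref{resolvent estimate-2} for the TC flow. The natural length scale is dictated by the proof of Lemma~\ref{prop:R-navier-v1}: the critical width is $\delta := \nu^{1/3}$ (using $|k|=1$), which is precisely the scale on which the viscous term $-\nu\partial_y^2$ and the imaginary potential $ik(y-\lambda_0)$ balance, and is exactly where the three--piece bound in the proof of Lemma~\ref{prop:R-navier-v1} becomes saturated. Since the conclusion is trivial unless $\nu^{1/3}\ll 1$, I may work under the assumption that $\nu$ is smaller than some fixed universal threshold, so that $\delta\ll 1$.

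Fix an interior point $\lambda_0\in \Omega$ with $[\lambda_0-\delta,\lambda_0+\delta]\subset \Omega$; for example, take $\lambda_0=0$ when $\Omega=\mathbb{R}$ or $\lambda_0=1/2$ when $\Omega=[0,1]$. Following the TC-flow template, I would define
$$
w_0(y):=\begin{cases}\bigl(y-\lambda_0+\delta\bigr)^3\bigl(\lambda_0+\delta-y\bigr)^3, & y\in[\lambda_0-\delta,\lambda_0+\delta],\\ 0, & \text{otherwise.}\end{cases}
$$
Setting $t=y-\lambda_0$, this is simply $w_0=(\delta^2-t^2)^3\mathds{1}_{|t|\leq\delta}$, which is $\mathcal{C}^2_c(\Omega)$, hence vanishes at $\partial\Omega$ and satisfies $yw_0\in\LtO$, so $w_0\in D(\mathcal{C})$.

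The remaining step is a direct computation of each of the three contributions to $(\mathcal{C}-ik\lambda_0)w_0=-\nu\partial_y^2w_0+\nu k^2w_0+ik(y-\lambda_0)w_0$. Scaling gives $\|w_0\|_{\LtO}\sim\delta^{13/2}$ and $\|\partial_y^2w_0\|_{\LtO}\sim\delta^{9/2}$, whence
$$
\nu\|\partial_y^2w_0\|_{\LtO}\lesssim\nu\delta^{-2}\|w_0\|_{\LtO}=\nu^{1/3}\|w_0\|_{\LtO},
$$
while $\nu k^2\|w_0\|_{\LtO}=\nu\|w_0\|_{\LtO}\leq\nu^{1/3}\|w_0\|_{\LtO}$ (since $\nu<1$) and the pointwise bound $|y-\lambda_0|\leq\delta$ on $\mathrm{supp}\,w_0$ yields
$$
|k|\,\|(y-\lambda_0)w_0\|_{\LtO}\leq\delta\|w_0\|_{\LtO}=\nu^{1/3}\|w_0\|_{\LtO}.
$$
Adding the three bounds gives $\|(\mathcal{C}-ik\lambda_0)w_0\|_{\LtO}\lesssim\nu^{1/3}\|w_0\|_{\LtO}$, which is the desired inequality.

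I do not anticipate any genuine obstacle: this is a standard semiclassical quasi-mode placed on the critical scale $\delta=\nu^{1/3}$ already identified by the proof of Lemma~\ref{prop:R-navier-v1}, and membership in $D(\mathcal{C})$ is automatic from the compact support of $w_0$ in the interior of $\Omega$. The only minor bookkeeping concerns the case $\Omega=[0,1]$ where one needs $\delta<1/2$ to fit the support; this is ensured by the smallness assumption on $\nu$ described above, and the complementary regime is absorbed into the constant $C$.
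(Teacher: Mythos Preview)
Your proposal is correct and follows essentially the same approach as the paper: the paper also constructs a degree-six polynomial bump supported on an interval of length $\nu^{1/3}$ (specifically $w_0(y)=y^3(\nu^{1/3}-y)^3$ on $[0,\nu^{1/3}]$ with $\lambda_0=0$) and checks the three contributions by the same scaling computation. The only cosmetic difference is that the paper places the bump with one endpoint at $\lambda_0$ rather than centering it there.
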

\begin{proof}Let $\lambda_0=0$ and
\begin{align*}
w_0(y)=\left\{
\begin{aligned}
&y^3(\nu^{1/3}-y)^3,\quad 0\leq y\leq \nu^{1/3},\\
&0,\quad \textrm{else}.
\end{aligned}
\right.
\end{align*}
By direct calculations, one can check that $w_0\in D(\mathcal{C})$ and
\begin{align*}
    \|w_0\|_{\LtO} \approx \nu^{\frac{13}{6}},\quad \nu\| (-\partial_y^2+1)w_0\|_{\LtO} \lesssim \nu(\nu^{-\f23}+1)\nu^\frac{13}{6}, \quad \| yw_0\|_{\LtO}\lesssim \nu^{\f13} \nu^{\frac{13}{6}}.
\end{align*}
Thus we deduce that
\begin{align*}
\|(\mathcal{C}-i\lambda)w_0\|_{\LtO} \leq
 \nu\| (-\partial_y^2+1)w_0\|_{\LtO} +\| yw_0\|_{\LtO}  \leq C\nu^{1/3} \|w_0\|_{\LtO}.
\end{align*}
\end{proof}

\section*{Acknowledgement}

 T. Li is partially supported by  National Natural Science Foundation of China under Grant 12421001.  P. Zhang is partially  supported by National Key R$\&$D Program of China under grant 2021YFA1000800 and by National Natural Science Foundation of China under Grant 12421001, 12494542 and 12288201.

\section*{Declarations}

\subsection*{Conflict of interest} The authors declare that there are no conflicts of interest.

\subsection*{Data availability}
This article has no associated data.

\end{document}